\numberwithin{equation}{section}
\DeclareFontFamily{OT1}{rsfs}{}
\DeclareFontShape{OT1}{rsfs}{n}{it}{<-> rsfs10}{}
\DeclareMathAlphabet{\mathscr}{OT1}{rsfs}{n}{it}
\theoremstyle{plain}
\newtheorem{theorem}{Theorem}[section]
\newtheorem{proposition}[theorem]{Proposition}
\newtheorem{lemma}[theorem]{Lemma}
\theoremstyle{definition}
\newtheorem{definition}[theorem]{Definition}
\newtheorem{remark}[theorem]{Remark}
\newcommand\R{\mathbb{R}}
\newcommand\Z{\mathbb{Z}}
\newcommand\eps{\varepsilon}
\newcommand\M{\mathcal{M}}
\begin{document}

\title[Finite time blowup for modified Euler]{Finite time blowup for Lagrangian modifications of the three-dimensional Euler equation}

\author{Terence Tao}
\address{UCLA Department of Mathematics, Los Angeles, CA 90095-1555.}
\email{tao@math.ucla.edu}

%\email{}

\subjclass[2010]{35Q30}

\begin{abstract}  In the language of differential geometry, the incompressible inviscid Euler equations can be written in vorticity-vector potential form as
\begin{align*}
\partial_t \omega + {\mathcal L}_u \omega &= 0\\
u &= \delta \tilde \eta^{-1} \Delta^{-1} \omega
\end{align*}
where $\omega$ is the vorticity $2$-form, ${\mathcal L}_u$ denotes the Lie derivative with respect to the velocity field $u$, $\Delta$ is the Hodge Laplacian, $\delta$ is the codifferential (the negative of the divergence operator), and $\tilde \eta^{-1}$ is the canonical map from $2$-forms to $2$-vector fields induced by the Euclidean metric $\eta$.  In this paper we consider a generalisation of these Euler equations in three spatial dimensions, in which the vector potential operator $\tilde \eta^{-1} \Delta^{-1}$ is replaced by a more general operator $A$ of order $-2$; this retains the Lagrangian structure of the Euler equations, as well as most of its conservation laws and local existence theory.  Despite this, we give three different constructions of such an operator $A$ which admits smooth solutions that blow up in finite time, including an example on $\R^3$ which is self-adjoint and positive definite.  This indicates a barrier to establishing global regularity for the three-dimensional Euler equations, in that any method for achieving this must use some property of those equations that is not shared by the generalised Euler equations considered here.
\end{abstract}

\maketitle
%\today

%%%%%%%%%%%%%%%%%%%%%%%%%%%%%%%%%%%%%%%%%%%%%%%%%

\section{Introduction}

\subsection{Formal theory of the generalised Euler equations}

In this paper we will consider finite time blowup for generalised equations of Euler type on Euclidean spaces $\R^d$, and more generally\footnote{A substantial portion of the discussion here could in fact be extended to arbitrary smooth Riemannian manifold domains, but we will not need to do so here.} on flat cylinders $\R^m \times (\R/\Z)^{d-m}$ for $0 \leq m \leq d$, thus $d$ represents the total number of spatial dimensions, and $m$ the total number of unbounded spatial dimensions.  We will restrict attention primarily to the domains $\R^3$, $\R^2 \times \R/\Z$, and $\R^2$.  In particular we shall assume that $m \geq 2$, in order to avoid some technical issues involving the Biot-Savart law at low frequencies.

Recall that if $\M = \R^m \times (\R/\Z)^{d-m}$ with $d \geq m \geq 2$, the Euler equations for incompressible, inviscid fluids on $\M$ can be written as
\begin{equation}\label{euler-eq}
\begin{split}
\partial_t u + (u \cdot \nabla) u &= - \nabla p \\
\nabla \cdot u &= 0
\end{split}
\end{equation}
where $u\colon \R \times \M \to \R^d$ is the velocity field and $p\colon \R \times \M \to \R$ is the pressure field.  For now, we shall only interpret the system \eqref{euler-eq} at the formal level, ignoring issues of regularity or decay, and also ignoring all cohomology by assuming that closed forms are automatically exact; we will return to these issues later, when we discuss local existence theory.

It will be convenient in this paper to use the language of differential geometry, in order to minimise the reliance on the Euclidean metric $\eta$ on $\M$; this will become useful later when we exploit the properties of the Lie derivative ${\mathcal L}_u$ (which will not preserve the Euclidean metric in general), as well as when we temporarily switch over to (modified) cylindrical coordinates in Section \ref{period-sec}.  See for instance \cite{aubin} for a basic introduction to the differential geometry concepts used in this paper.

We begin with setting out notation for the standard Cartesian coordinates on $\M$, though we emphasise that the differential geometry constructions introduced here are coordinate-independent (although some of them will rely on the standard volume form $d\operatorname{vol}$ on $\M$). We let $x^1,\dots,x^d$ denote\footnote{We use superscripts here instead of the more customary subscripts $x_1,\dots,x_d$ in order to be compatible with the raising and lowering conventions of differential geometry.} the usual coordinates on $\M = \R^m \times (\R/\Z)^{d-m}$ (thus $x^1,\dots,x^m \in \R$ and $x^{m+1},\dots,x^d \in \R/\Z$).  Taking differentials, we obtain the standard $1$-forms $dx^1,\dots,dx^d$ on $\M$, and then on taking wedge products we obtain the standard volume form
$$ d\operatorname{vol} \coloneqq dx^1 \wedge \dots \wedge dx^d.$$
Dually, we have the standard vector fields
$$ \frac{d}{d x^1}, \dots, \frac{d}{d x^d}.$$
We have suggestively written these vector fields to resemble first-order differential operators, but in order to reduce confusion, we will use the symbol $\partial_i$ (as opposed to $\frac{\partial}{\partial x^i}$) for $i=1,\dots,d$ to denote the partial differentiation operation in the $x^i$ direction, to distinguish this partial differentiation operation from the associated vector field $\frac{d}{dx^i}$.

We let $\Lambda_0(\M)$ denote the space of (formal) scalar functions from $\M$ to $\R$.  More generally, for any $k \geq 0$, let $\Lambda_k(\M) = \Gamma( \bigwedge^k T^* \M)$ denote the space of (formal) $k$-forms on $\M$, thus for instance $dx^1,\dots,dx^d \in \Lambda_1(\M)$ and $d\operatorname{vol} \in \Lambda_d(\M)$.  The coordinates of a $k$-form $\omega \in \Lambda_k(\M)$ will be denoted 
$\omega_{i_1 \dots i_k}$,
where the indices $i_1,\dots,i_k$ range from $1$ to $d$ (with the usual summation conventions), and $\omega_{i_1 \dots i_k}$ is antisymmetric in $i_1,\dots,i_k$.  Of course, $\Lambda_k(\M)$ is trivial for $k > d$.  The standard basis for $\Lambda_k(\M)$ (as a $\Lambda_0(\M)$-module) is given by the constant $k$-forms
$$ dx^{i_1} \wedge \dots \wedge dx^{i_k}$$
for $1 \leq i_1 < \dots < i_k \leq d$; thus with the usual summation conventions we have
$$ \omega = \frac{1}{k!} \omega_{i_1 \dots i_k} dx^{i_1} \wedge \dots \wedge dx^{i_k}$$
(the $\frac{1}{k!}$ factor reflecting the fact that the $i_1,\dots,i_k$ are not necessarily in increasing order with the usual summation conventions).
Dual to the space $\Lambda_k(\M)$ of $k$-forms is the space $\Gamma^k(\M) = \Gamma(\bigwedge^k T \M)$ of (formal) $k$-vector fields on $\M$; the coordinates of an element $\alpha \in \Gamma^k(\M)$ will be denoted
$\alpha^{i_1 \dots i_k}$ and is antisymmetric in the $i_1,\dots,i_k$; again, $\Gamma^k(\M)$ is trivial for $k>d$, and we adopt the convention that it is trivial for $k<0$ also.  We also make the identification $\Lambda_0(\M) \equiv \Gamma^0(\M)$.  The standard basis for $\Gamma^k(\M)$ (as a $\Lambda_0(\M) = \Gamma^0(\M)$-module) is given by the constant $k$-vector fields
$$ \frac{d}{d x^{i_1}} \wedge \dots \wedge \frac{d}{d x^{i_k}}$$
for $1 \leq i_1 < \dots < i_k \leq d$, thus
$$ \alpha=  \frac{1}{k!} \alpha^{i_1 \dots i_k} \frac{d}{d x^{i_1}} \wedge \dots \wedge \frac{d}{d x^{i_k}}.$$
We have the usual pairing operation $\langle, \rangle\colon \Lambda_k(\M) \times \Gamma^k(\M) \to \Lambda_0(\M)$, defined in coordinates as
$$ \langle \omega, \alpha \rangle \coloneqq \frac{1}{k!} \omega_{i_1 \dots i_k} \alpha^{i_1 \dots i_k},$$
thus for instance $\langle dx^i, \frac{d}{dx^j} \rangle$ equals $1$ when $i=j$, and $0$ otherwise.

We have the usual exterior derivative operator $d\colon \Lambda_k(\M) \to \Lambda_{k+1}(\M)$, defined in coordinates as
$$ (d\omega)_{i_1 \dots i_{k+1}} \coloneqq \sum_{j=1}^{k+1} (-1)^{j-1} \partial_{i_j} \omega_{i_1 \dots i_{j-1} i_{j+1} \dots i_{k+1}};$$
this is of course compatible with our notation $dx^1,\dots,dx^d$ for the standard $1$-forms (viewing each coordinate function $x^i$, locally at least, as an element of $\Lambda_0(\M)$).  Dually, we have the codifferential\footnote{In the usual Hodge theory literature, one uses a Riemannian metric to identify $\Gamma^k$ with $\Lambda_k$ as per \eqref{teta-def}, so that the codifferential acts on $k$-forms rather than $k$-vector fields.  However, it will be more convenient here to avoid using the metric identification to define the codifferential, because the Euclidean metric $\eta$ will not in general be preserved by flowing along the velocity field $u$.} $\delta\colon \Gamma^{k+1}(\M) \to \Gamma^k(\M)$ defined in coordinates as
$$ (\delta \alpha)^{i_2 \dots i_{k+1}} \coloneqq -\partial_{i_1} \alpha^{i_1 \dots i_{k+1}}.$$
Thus, for instance, if $X \in \Gamma^1(\M)$ is a vector field, then $\delta X = - \operatorname{div} X$ is the negative divergence of $X$.
As is well known, we have
\begin{equation}\label{dd}
d^2 = 0
\end{equation}
and
\begin{equation}\label{deldel}
\delta^2 = 0;
\end{equation}
see e.g. \cite[\S 5.17]{aubin}.
We let $B_k(\M) \coloneqq \{ \omega \in \Lambda_k(\M): d \omega = 0 \}$ denote the space of closed $k$-forms, and similarly let $B^k(\M) \coloneqq \{ \alpha \in \Gamma^k(\M): \delta \alpha = 0 \}$ denote the space of divergence-free $k$-vector fields.

The Euclidean metric $\eta$ on $\M$ is given by its first fundamental form
$$ d\eta^2 = (dx^1)^2 + \dots + (dx^d)^2.$$
It can be viewed in coordinates as a $(0,2)$-tensor $\eta_{ij}$, or after inversion as a $(2,0)$-tensor $\eta^{ij}$.  It provides an identification $\tilde \eta\colon \Gamma^k(\M) \to \Lambda_k(\M)$ of $k$-vector fields with $k$-forms, defined in coordinates by
\begin{equation}\label{teta-def}
 (\tilde \eta T)_{i_1 \dots i_k} \coloneqq \eta_{i_1 j_1} \dots \eta_{i_k j_k} T^{j_1 \dots j_k},
\end{equation}
thus for instance
$$ \tilde \eta\left( \frac{d}{d x^{i_1}} \wedge \dots \wedge \frac{d}{d x^{i_k}} \right) = dx^{i_1} \wedge \dots \wedge dx^{i_k}$$
or upon inverting
$$ \tilde \eta^{-1}\left( dx^{i_1} \wedge \dots \wedge dx^{i_k} \right) =  \frac{d}{d x^{i_1}} \wedge \dots \wedge \frac{d}{d x^{i_k}}.$$ 

Suppose that $u,p$ (formally) solve \eqref{euler-eq}.  For each time $t$, $u(t)$ and $p(t)$ can be viewed as elements of $B^1(\M)$ and $\Lambda_0(\M)$ respectively; in coordinates with the usual summation conventions, \eqref{euler-eq} becomes
\begin{align*}
\partial_t u^i + u^j \partial_j u^i &= - \eta^{ij} \partial_j p \\
\partial_i u^i &= 0.
\end{align*}
If we define the \emph{covelocity} $v(t) \in \Lambda_1(\R^d)$ to be the $1$-form $v = \tilde \eta u$, thus in coordinates
$$ v_i \coloneqq \eta_{ij} u^j,$$
then we see that
\begin{equation}\label{vip}
\partial_t v_i + u^j \partial_j v_i +(\partial_i u^j) v_j = - \partial_i \tilde p 
\end{equation}
where the \emph{modified pressure} $\tilde p(t) \in \Lambda_0(\M)$ is given by the formula
$$ \tilde p \coloneqq p - \frac{1}{2} u^j v_j.$$
Recalling (see e.g. \cite[\S 3.4]{aubin}) that the \emph{Lie derivative} ${\mathcal L}_u$ along a vector field $u$ acts on $k$-forms $\omega \in \Lambda_k(\M)$ by the Cartan formula 	
\begin{equation}\label{cartan}
 {\mathcal L}_u \omega = \iota_u(d\omega) + d(\iota_u \omega)
\end{equation}
where $\iota_u\colon \Lambda_{k+1}(\M) \to \Lambda_k(\M)$ is the contraction operator
$$ (\iota_u \omega)_{i_2 \dots i_{k+1}} \coloneqq u^{i_1} \omega_{i_1 \dots i_{k+1}},$$
we see that
$$ ({\mathcal L}_u v)_i = u^j \partial_j v_i + (\partial_i u^j) v_j$$
and hence \eqref{vip} can be written in coordinate-free notation as
$$ \partial_t v + {\mathcal L}_u v = - d \tilde p.$$
If we define the \emph{vorticity} $\omega(t) \in \Lambda_2(\M)$ to be the exterior derivative 
$$\omega \coloneqq dv = d \tilde \eta u$$
of the covelocity $v$, and use the basic commutativity identity 
\begin{equation}\label{dcom}
d {\mathcal L}_u = {\mathcal L}_u d
\end{equation}
(see e.g. \cite[Proposition 3.6]{aubin}) and \eqref{dd}, we conclude that $\omega(t)$ in fact lies in $B_2(\M)$ (i.e. it is closed) and obeys the \emph{vorticity equation}
\begin{equation}\label{vort}
 \partial_t \omega + {\mathcal L}_u \omega = 0.
\end{equation}

\begin{remark}\label{hodge-rem}  The standard volume form $d\operatorname{vol} \coloneqq dx^1 \wedge \dots \wedge dx^d \in \Gamma^d(\M)$, induces the \emph{Hodge duality operator} $*\colon \Gamma^k(\M) \to \Lambda_{d-k}(\M)$ for $0 \leq d \leq k$, defined by the formula
$$ \omega \wedge (*\alpha) = \langle \omega,\alpha \rangle d\operatorname{vol}$$
for $\omega \in \Lambda_k(\M)$ and $\alpha \in \Gamma^k(\M)$.  Thus for instance we have
\begin{equation}\label{dim}
\delta = * d *^{-1}.
\end{equation}
The dual $*^{-1} \omega \in \Gamma^{d-2}(\M)$ of the vorticity is then a scalar function in two dimensions and a vector field in three dimensions, and in the Euler equation literature it is common to refer to this scalar or vector field, rather than the $2$-form $\omega$, as the vorticity (i.e. one replaces exterior derivative $d$ with a suitable curl operator).  The vorticity equation then becomes the familiar equation $\partial_t \omega + (u \cdot \nabla) \omega = 0$ (in the two-dimensional case) or $\partial_t \omega + (u \cdot \nabla) \omega = (\omega \cdot \nabla) u$ (in the three-dimensional case).  However, we will adopt a more differential-geometric viewpoint in this paper by interpreting the vorticity as a $2$-form rather than a scalar or vector field.  This distinction becomes particularly important when applying Lie derivatives such as ${\mathcal L}_u$, as these derivatives act on $2$-forms in a different fashion than on scalars or vector fields (this is related to the fact that the velocity field $u$ will almost never be a Killing vector field for the Euclidean metric $\eta$, so that ${\mathcal L}_u \eta \neq 0$).  Interpreting the vorticity as a $2$-form will also make it easier to change to curvilinear coordinate systems, such as cylindrical coordinates, as we will do in Section \ref{period-sec}.
\end{remark}

The velocity field $u$ can be (formally) recovered from the vorticity $\omega$ by the \emph{Biot-Savart law}
$$ u = \delta \tilde \eta^{-1} \Delta^{-1} \omega$$
where the \emph{Hodge Laplacian} $\Delta\colon \Lambda_k(\M) \to \Lambda_k(\M)$ is given by
$$\Delta \coloneqq d \tilde \eta \delta \tilde \eta^{-1} + \tilde \eta \delta \tilde \eta^{-1} d,$$
which in the Euclidean metric coordinates simplifies to the familiar formula\footnote{Note here the negative sign in our definition of the Laplacian, which differs from the usual conventions for the Laplacian in the Euler equation literature.  In particular, our Laplacian $\Delta$ will be positive semi-definite rather than negative semi-definite.  In the differential geometry literature it is common to refer to $\tilde \eta \delta \tilde \eta^{-1}$ rather than $\delta$ as the codifferential, so that $\Delta = d \delta + \delta d$ in this notation; however we prefer in this paper to make the dependence on the metric $\eta$ more explicit.} $\Delta = - \eta^{ij} \partial_i \partial_j$.  Note that $\Delta$ preserves $B_k(\M)$, and so the inverse operator $\Delta^{-1}$ does so also (formally, at least).  We make the technical remark that when $m=2$, the operator $\Delta^{-1}$ is only well defined up to constants, even when applied to forms that are smooth and compactly supported, unless one arbitrarily fixes a convention for defining $\Delta^{-1}$.  However this will not be a major issue in practice because the operator $\delta \tilde\eta^{-1} \Delta^{-1}$ will remain canonically defined.

By slight abuse of notation, we refer to the $2$-vector field $\tilde \eta^{-1} \Delta^{-1} \omega(t)$ as the \emph{vector potential} (also known as the \emph{stream function} in the two-dimensional case $d=2$), and refer to the operator $\tilde \eta^{-1} \Delta^{-1}\colon B_2(\M) \to \Gamma^2(\M)$ as the \emph{vector potential operator} for the Euler equations.  We observe that the vector potential operator $\tilde \eta^{-1} \Delta^{-1}$ is \emph{formally self-adjoint} in the sense that we have the (formal) integration by parts identity
$$ \int_{\R^d} \langle \omega, \tilde \eta^{-1} \Delta^{-1} \omega' \rangle\ d\operatorname{vol}
= \int_{\R^d} \langle \omega', \tilde \eta^{-1} \Delta^{-1} \omega \rangle\ d\operatorname{vol}$$
for $\omega, \omega' \in B_2(\M)$.

We refer to the system
\begin{align*}
 \partial_t \omega + {\mathcal L}_u \omega &= 0 \\
u &= \delta \tilde \eta^{-1} \Delta^{-1} \omega
\end{align*}
as the \emph{vorticity-vector potential formulation} of the Euler equations.  We now generalise this system to other choices of vector potential operator:

\begin{definition}[Generalised Euler equations]\label{gee}  Let $\M = \R^m \times (\R/\Z)^{d-m}$ for some $d \geq 2$ and $0 \leq m \leq d$, and let $A\colon B_2(\M) \to \Gamma^2(\M)$ be a (formal) linear operator from the space of closed $2$-forms to the space of $2$-vector fields.  We refer to the (formal) system of equations
\begin{align}
 \partial_t \omega + {\mathcal L}_u \omega &= 0 \label{vort-1}\\
u &= \delta A \omega,\label{vort-2}
\end{align}
where $\omega(t) \in B_2(\M)$ and $u(t) \in B^1(\M)$ for each time $t$, as the \emph{generalised Euler equations} with vector potential operator $A$.  We say that the vector potential operator $A$ is \emph{formally self-adjoint} if one formally has
\begin{equation}\label{fsa}
 \int_{\M} \langle \omega, A \omega' \rangle\ d\operatorname{vol}
= \int_{\M} \langle \omega', A \omega \rangle\ d\operatorname{vol}
\end{equation}
for all $\omega,\omega' \in B_2(\M)$.
\end{definition}

The vorticity-vector potential formulation of the Euler equations (which we will now call the \emph{true Euler equations} for emphasis) are thus the generalised Euler equations associated to the vector potential operator 
\begin{equation}\label{asper}
\tilde \eta^{-1} \Delta^{-1}.
\end{equation}
Another example of a system that can be (formally) written as the generalised Euler equation is the (inviscid) \emph{surface quasi-geostrophic (SQG)} equations
\begin{align*}
\partial_t \theta + (u \cdot \nabla) \theta &= 0 \\
u &= (-\partial_2 \Delta^{-1/2}, \partial_1 \Delta^{-1/2}) \theta
\end{align*}
in two spatial dimensions $d=2$, 
where $\theta\colon \R \times \M \to \R$ is a scalar field and $u\colon \R \times \M \to \R^2$ is a vector field.  This equation arises in atmospheric science and can be considered as a toy model for the three-dimensional Euler equations; see \cite{cmt} for further discussion.  If we set
$$ \omega \coloneqq \theta d\operatorname{vol} = \theta dx_1 \wedge dx_2$$
and define the vector potential operator $A\colon B_2(\M) \to \Gamma^2(\M)$ by $A \coloneqq \Delta^{-1/2}$, or in coordinates	
\begin{equation}\label{sqg-a}
 A( \theta dx^1 \wedge dx^2 ) \coloneqq \Delta^{-1/2} \theta \frac{d}{d x^1} \wedge \frac{d}{d x^2} 
\end{equation}
then we see that the SQG equations become the generalised Euler equations in two dimensions with the choice \eqref{sqg-a} of vector potential operator.  Later, in Section \ref{form}, we will give an alternate way of interpreting SQG as a generalised Euler equation, this time in three dimensions, and with a vector potential operator of order $-2$ (like $\tilde \eta^{-1} \Delta^{-1}$).  

\begin{remark}
The \emph{modified SQG} equations, in which the exponent $-1/2$ appearing in \eqref{sqg-a} is replaced by $-\alpha/2$ for some parameter $\alpha$ between $1$ and $2$, is a family of interpolants between SQG and the two-dimensional Euler equations which have also been studied in the literature; see e.g. \cite{kn}.  However, we will not study these equations further in this paper, though we will note the recent paper \cite{kryz} in which finite time blowup was established for patch solutions to the generalised SQG equations in a half-plane.
\end{remark}

\begin{remark}  The formalism in Definition \ref{gee} does not directly use the Euclidean metric $\eta$ on $\M$; one only needs the structure $(\M, d\operatorname{vol})$ of $\M$ as a smooth manifold equipped with a volume form $d\operatorname{vol}$ (in order to define the codifferential $\delta$).  However, when one works with the true Euler equations, the Euclidean metric $\eta$ is needed to define the vector potential operator $A = \tilde \eta^{-1} \Delta^{-1}$.  Thus we see that the role of Euclidean geometry (beyond the volume form) in the true Euler equations has been completely captured in this formalism by the operator $A$.
\end{remark}

\begin{remark}\label{covelocity}  One can rewrite the generalised Euler equations in a form resembling the traditional form \eqref{euler-eq} of the true Euler equations by formally defining the covelocity $v \in \Lambda_1(\M)$ to solve the system
$$ dv = \omega; \quad \delta \tilde \eta^{-1} v = 0$$
and then the generalised Euler equations may be rewritten as
\begin{align*}
\partial_t v + {\mathcal L}_u v &= d\tilde p \\
u &= \delta \tilde \eta^{-1} A d v \\
\delta \tilde \eta^{-1} v &= 0.
\end{align*}
\end{remark}

The generalised Euler equations (formally) obey many of the conservation laws that the true Euler equations do, particularly if the vector potential operator $A$ is formally self-adjoint and commutes with a suitable symmetry.  More precisely, we have

\begin{proposition}[Formal conservation laws]\label{Fcl}  Let $\M = \R^m \times (\R/\Z)^{d-m}$ for some $d \geq 2$ and $0 \leq m \leq d$, and let $A\colon B_2(\M) \to \Gamma^2(\M)$ be a (formal) linear operator.  Let $\omega, u$ solve the generalised Euler equations with vector potential operator $A$.
\begin{itemize}
\item[(i)] (Kelvin circulation theorem)  If $S = S(t)$ is a (time-dependent, oriented) surface with boundary that evolves along the (time-dependent) velocity field $u = u(t)$, then the quantity\footnote{In the case of the true Euler equations, this quantity $\int_S \omega$ can be expressed via Stokes' theorem as $\int_{\partial S} \tilde \eta u$, which is the physical circulation of velocity along the boundary $\partial S$ of $S$.  For the generalised Euler equations, this quantity is not quite the physical circulation, but is instead the quantity $\int_{\partial S} v$ where $v$ is the covelocity from Remark \ref{covelocity}. Nevertheless we shall abuse notation and refer to the quantity $\int_S \omega$ as the \emph{circulation} around the surface $S$.  We thank Peter Constantin for pointing out this subtle distinction between the circulation conserved by Kelvin's theorem and physical circulation in the context of generalised Euler equations.} $\int_S \omega$ is formally conserved in time.
\item[(ii)]  (Preservation of vortex streamlines)  If $d=3$, then the curves formed by integrating the vector field $*^{-1}\omega(t) \in \Gamma^1(\M)$ (i.e., the vortex streamlines) are transported by the velocity field $u$.
\item[(iii)]  (Conservation of helicity)  If $\M = \R^3$, define the helicity $H(t)$ to be the quantity $H(t) \coloneqq \int_{\M} v(t) \wedge \omega(t)$, where $v(t) \in \Lambda_1(\M)$ is an arbitrary $1$-form with $dv = \omega$; observe from Stokes' theorem that this quantity does not depend on the choice of $v$.  Then $H$ is formally conserved in time.
\item[(iv)]  (Conservation of Hamiltonian) Suppose $A$ is formally self-adjoint.  Define the energy $E(t)$ to be the quantity 
\begin{equation}\label{eht}
E(t) \coloneqq \frac{1}{2} \int_{\M} \langle \omega, A \omega \rangle\ d \operatorname{vol}.
\end{equation}
Then $E$ is formally conserved in time.
\item[(v)]  (Conservation of impulse)  Suppose $A$ is formally self-adjoint.  Let $X \in B^1(\M)$ be a (time-independent) divergence-free vector field such that the Lie derivative ${\mathcal L}_X$ commutes with $A$: ${\mathcal L}_X A = A {\mathcal L}_X$.  Suppose that $\alpha \in \Gamma^2(\M)$ is a time-independent $2$-vector field such that $\delta \alpha = X$.  Then the quantity $\int_{\M} \langle \omega, \alpha \rangle\ d\operatorname{vol}$ is formally conserved in time.
\end{itemize}
\end{proposition}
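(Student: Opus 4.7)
The plan is to derive each conservation law from the vorticity equation \eqref{vort-1} by a short coordinate-free computation, using Cartan's formula \eqref{cartan}, the nilpotence identities \eqref{dd} and \eqref{deldel}, and the formal adjoint relation
\[ \int_\M \langle d\alpha,\beta\rangle\,d\operatorname{vol} = \int_\M \langle \alpha,\delta\beta\rangle\,d\operatorname{vol} \]
obtained by integration by parts (the sign in the definition of $\delta$ is calibrated exactly so that no extra sign appears here). For (i), since $S(t)$ is transported by $u$, the standard transport formula $\frac{d}{dt}\int_{S(t)}\omega = \int_{S(t)}(\partial_t\omega + \mathcal{L}_u\omega)$ combined with \eqref{vort-1} gives conservation at once. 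For (ii), the hypotheses $d=3$ and $\delta u = 0$ give $\mathcal{L}_u d\operatorname{vol}=0$; combining this with the identification $\omega = \iota_{*^{-1}\omega}d\operatorname{vol}$ and the Leibniz-type identity $\mathcal{L}_u \iota_\xi = \iota_{[u,\xi]} + \iota_\xi \mathcal{L}_u$ converts \eqref{vort-1} into $\partial_t(*^{-1}\omega) + \mathcal{L}_u(*^{-1}\omega) = 0$, so the vector field $*^{-1}\omega$ is itself Lie-transported by $u$, and in particular its integral curves (the vortex streamlines) are carried along by the flow.

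For (iii), differentiating under the integral, using $\partial_t\omega = -\mathcal{L}_u\omega$ and the covelocity equation $\partial_t v + \mathcal{L}_u v = d\tilde p$ from Remark \ref{covelocity}, and the derivation property of $\mathcal{L}_u$ on wedge products, I would obtain
\[ \partial_t(v\wedge\omega) = -\mathcal{L}_u(v\wedge\omega) + d\tilde p \wedge \omega. \]
Since $d\omega=0$, the second term equals $d(\tilde p\,\omega)$. Since $d(v\wedge\omega) = \omega\wedge\omega$ is a $4$-form on $\R^3$ and therefore vanishes, Cartan's formula \eqref{cartan} reduces the first term to $-d\iota_u(v\wedge\omega)$. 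Thus $\partial_t(v\wedge\omega)$ is exact, and $\frac{dH}{dt}$ vanishes formally by Stokes' theorem.

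For (iv), differentiating \eqref{eht} and using \eqref{fsa} to merge the two resulting terms, then substituting $\partial_t\omega = -\mathcal{L}_u\omega = -d\iota_u\omega$ (valid because $d\omega=0$), moving $d$ to $\delta$ by the adjoint relation above, and using $\delta A\omega = u$, I get
\[ \frac{dE}{dt} = -\int_\M \langle d\iota_u\omega, A\omega\rangle\,d\operatorname{vol} = -\int_\M \langle \iota_u\omega, u\rangle\,d\operatorname{vol}, \]
which vanishes because $u^i\omega_{ij}u^j = 0$ by antisymmetry of $\omega$.

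For (v), the same chain of integrations by parts applied to $\int_\M\langle\omega,\alpha\rangle\,d\operatorname{vol}$, together with $\delta\alpha = X$ and the coordinate identity $\langle\iota_u\omega,X\rangle = -\langle\iota_X\omega,u\rangle$ (again from antisymmetry of $\omega$), reduces the time derivative first to $-\int_\M\langle\iota_u\omega,X\rangle\,d\operatorname{vol} = \int_\M\langle\iota_X\omega,u\rangle\,d\operatorname{vol}$; substituting $u = \delta A\omega$, integrating by parts back the other way, and using $d\iota_X\omega = \mathcal{L}_X\omega$ (from \eqref{cartan} and $d\omega=0$) gives $\int_\M\langle\mathcal{L}_X\omega,A\omega\rangle\,d\operatorname{vol}$. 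Since $\delta X=0$, the Leibniz rule $\mathcal{L}_X \langle\omega,\beta\rangle = \langle\mathcal{L}_X\omega,\beta\rangle + \langle\omega,\mathcal{L}_X\beta\rangle$ together with $\int_\M\mathcal{L}_X f\,d\operatorname{vol} = 0$ makes $\mathcal{L}_X$ formally skew-adjoint on the pairing $\int_\M\langle\cdot,\cdot\rangle\,d\operatorname{vol}$, so the chain
\[ \int_\M\langle\mathcal{L}_X\omega,A\omega\rangle\,d\operatorname{vol} = \int_\M\langle\omega,A\mathcal{L}_X\omega\rangle\,d\operatorname{vol} = \int_\M\langle\omega,\mathcal{L}_X A\omega\rangle\,d\operatorname{vol} = -\int_\M\langle\mathcal{L}_X\omega,A\omega\rangle\,d\operatorname{vol} \]
(using \eqref{fsa}, then the commutation hypothesis, then skew-adjointness) forces this quantity to vanish. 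The main obstacle is purely the sign and normalisation bookkeeping in the repeated integrations by parts, together with verifying the two ``adjointness'' inputs ($d/\delta$ adjointness, and formal skew-adjointness of $\mathcal{L}_X$ when $\delta X=0$); everything else is algebra using Cartan, \eqref{dcom}, and antisymmetry, and the hypotheses on $A$ enter only through the clean manipulations above.
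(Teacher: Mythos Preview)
Your argument is correct in all five parts, and your route in (iv) and (v) is genuinely different from---and somewhat slicker than---the paper's. The paper, after reaching $\partial_t E = -\int\langle\mathcal{L}_u\omega, A\omega\rangle\,d\operatorname{vol}$, first moves the Lie derivative across via the skew-adjointness identity \eqref{integ} to get $\int\langle\omega,\mathcal{L}_u A\omega\rangle\,d\operatorname{vol}$, then writes $\omega = dv$, passes $d$ to $\delta$, and finally uses that $\mathcal{L}_u$ commutes with $\delta$ (which in turn requires $\mathcal{L}_u d\operatorname{vol}=0$ and \eqref{dim}) to reduce to $\int\langle v,\mathcal{L}_u u\rangle = \int\langle v,[u,u]\rangle = 0$. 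Your shortcut is to stay on the $\omega$ side: since $d\omega=0$, Cartan gives $\mathcal{L}_u\omega = d\iota_u\omega$ immediately, and one integration by parts against $\delta A\omega = u$ lands on $\langle\iota_u\omega,u\rangle = u^i\omega_{ij}u^j = 0$ by antisymmetry. This avoids both the potential $v$ and the commutation of $\mathcal{L}_u$ with $\delta$. The same manoeuvre drives your (v), where the paper again goes through $\omega = dv$ and the identity $\mathcal{L}_u X = -\mathcal{L}_X u$; your pairing identity $\langle\iota_u\omega,X\rangle = -\langle\iota_X\omega,u\rangle$ replaces that step, and the closing skew-adjointness/self-adjointness/commutation chain is then identical to the paper's.

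For (i)--(iii) the two approaches are essentially the same computation phrased slightly differently: the paper passes through the flow map $\Phi(t)$ and the Cauchy vorticity formula $\omega(t) = \Phi(t)_*\omega(0)$, while you work infinitesimally via the transport formula and the Lie-bracket identity $\mathcal{L}_u\iota_\xi - \iota_\xi\mathcal{L}_u = \iota_{[u,\xi]}$. One small point on (iii): you invoke the covelocity equation $\partial_t v + \mathcal{L}_u v = d\tilde p$ from Remark~\ref{covelocity}, but the statement allows $v$ to be \emph{any} primitive of $\omega$. This is harmless---for any such $v$ one has $d(\partial_t v + \mathcal{L}_u v) = \partial_t\omega + \mathcal{L}_u\omega = 0$, so $\partial_t v + \mathcal{L}_u v$ is closed and hence (formally) exact, and your computation goes through verbatim with a $v$-dependent $\tilde p$. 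The paper instead avoids choosing an evolution equation for $v$ by first integrating by parts to reduce $\partial_t H$ to $-2\int v\wedge\mathcal{L}_u\omega$ and then showing this integrand is exact via Leibniz; both routes hinge on $\omega\wedge\omega = 0$ in three dimensions.
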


In the case of the true Euler equations with $\M = \R^d$, examples of (formal) conservation laws arising from Proposition \ref{form}(v) include the total vorticity
$$ \Omega_{ij} \coloneqq \int_{\M} \omega_{ij}\ d\operatorname{vol}$$
(corresponding to the zero vector field), the impulse
$$ I_j \coloneqq \frac{-1}{d-1} \int_{\M} x^i \omega_{ij}\ d\operatorname{vol}$$
(corresponding to the translation vector fields $\frac{d}{d x^j}$), and the moment of impulse
$$ M_{jk} \coloneqq \frac{-1}{d} \int_{\M} \eta_{lk} x^i x^l \omega_{ij} - \eta_{lj} x^i x^l \omega_{ik}\ d\operatorname{vol}$$
(corresponding to the rotation vector field $x^i \frac{d}{d x^j} - x^j \frac{d}{d x^i}$).  If the velocity field $u$ has sufficient decay, then $\Omega_{ij}$ vanishes, $I_j$ is equal to the total momentum $\int_{\M} u^j\ d\operatorname{vol}$ (after contracting by $\eta$), and $M_{jk}$ is equal to the total angular momentum $\int_{\M} (x^j u^k - x^k u^j)\ d\operatorname{vol}$ (again after contracting by $\eta$); however, the Biot-Savart law does not always give enough decay on $u$ to justify these computations, even when $\omega$ is smooth and compactly supported.  See \cite{majda}, \cite{shankar} for further discussion of these conservation laws.

We prove Proposition \ref{Fcl} in Section \ref{fcl-sec} by direct computation, relying mostly on the standard properties of the Lie and exterior derivatives.  One can also interpret these conservation laws as instances of Noether's theorem, using the Euler-Poincar\'e interpretation of generalised Euler equations (in the spirit of \cite{arnold}) as formal geodesic flow in the infinite-dimensional manifold of volume-preserving diffeomorphisms of $\M$, with the vector potential operator $A$ determining the formal (right-)invariant Riemannian metric to place on this manifold; see Section \ref{lag-sec}.  It is certainly possible to prove rigorous versions of Proposition \ref{Fcl} assuming sufficient regularity and decay of the solution (and assuming that there are no cohomological obstructions), but we will not need to do so here (except for Proposition \ref{Fcl}(i), which is used to prove Theorem \ref{first-blow} below).

\subsection{Local existence theory}

Thus far, all of our discussion has been purely formal, ignoring all requirements of decay and smoothness.  We now turn to the rigorous existence theory of the generalised Euler equations.  For this we will need to place the fields $u$ and $\omega$ in appropriate (high regularity) function spaces; we will also now work in coordinates, abandoning any pretense of coordinate invariance.  As mentioned previously, we will assume that the number $m$ of non-compact directions is at least two, in order to avoid problems with defining the inverse $\Delta^{-1}$ of the Hodge Laplacian.

For any $1 \leq p \leq \infty$, we let $L^p \cap \Lambda_k(\M)$ be the space of $k$-forms that are $p^{\operatorname{th}}$ power integrable, with the usual norm
$$ \| \omega \|_{L^p(\M)} \coloneqq \left(\int_\M |\omega|^p\ d\operatorname{vol}\right)^{1/p}$$
with the usual Euclidean norm on tensors to define $|\omega|$, and with the usual modifications for $p=\infty$.  Similarly define $L^p \cap \Gamma^k(\M)$, $L^p \cap B^k(\M)$, and $L^p \cap B_k(\M)$, where in the latter two cases we interpret the differential operators $d,\delta$ in the distributional sense, thus for instance $L^p \cap B^k(\M)$ consists of those $\alpha$ in $\Gamma^k(\M)$ that are $p^{\operatorname{th}}$ power integrable with $\delta \alpha = 0$ in the sense of distributions.  For any $k \geq 0$ and $s \in \R$, we define $\dot H^s \cap \Lambda_k(\M)$ to be the space of tempered distributional $k$-forms $\omega \in \Lambda_k(\M)$ whose (tempered distributional) Fourier transform
$$ \hat \omega(\xi) \coloneqq \int_{\M} \omega(x) e^{-2 \pi i x \cdot \xi}\ dx$$
(computed by working in the standard coordinate basis and taking the tempered distributional Fourier transform of each component of $\omega$ separately) is such that $|\xi|^s \hat \omega$ is square-integrable, thus
$$ \|\omega\|_{\dot H^s(\M)}^2 \coloneqq \int_{\hat \M} |2 \pi \xi|^{2s} |\hat \omega(\xi)|^2\ d\xi < \infty$$
(here we use the standard Euclidean norm on tensors to define $|\hat \omega(\xi)|$, and $\hat \M = \R^m \times \Z^{d-m}$ denotes the Pontryagin dual of $\M$).  The factors of $2\pi$ are of very minor importance and can be ignored for a first reading.
The space $\dot H^s \cap \Lambda_k(\M)$ can be easily verified to be a Hilbert space.  Similarly define $\dot H^s \cap \Gamma^k(\M)$, $\dot H^s \cap B_k(\M)$, and $\dot H^s \cap B^k(\M)$.  As is usual, we write $H^s \coloneqq L^2 \cap \dot H^s$ (thus for instance $H^s \cap \Lambda_k = L^2 \cap \dot H^s \cap \Lambda_k$) and
$$ \| \omega \|_{H^s(\M)}^2 \coloneqq \| \omega \|_{L^2(\M)}^2 + \| \omega \|_{\dot H^s(\M)}^2.$$
We also define $C^\infty_c \cap \Lambda_k(\M)$ to be the space of $k$-forms that are smooth and compactly supported, and similarly define $C^\infty_c \cap \Gamma^k(\M)$, etc..

Fix an integer $s > \frac{d}{2}+1$, and let $1 < p \leq 2$ be an exponent\footnote{In particular, when $m \geq 3$ we can take $p=2$, which simplifies some of the discussion below.  On the other hand, these hypotheses are not satisfiable if $m=0$ or $m=1$.  The reason we need the $L^p$ integrability for the vorticity $\omega$ is in order to make sense of the velocity $u = \delta A \omega$ as a continuous function, and not merely as a distribution.}
 with $p < m$, where we recall that $m$ is the number of non-compact dimensions in $\M$.
It turns out that a convenient space to place the vorticity field $\omega(t)$ for a given time $t$ is
$$ L^p \cap H^s \cap B_2(\M)$$
More precisely, to construct solutions on the time interval $[0,T]$, we will place $\omega$ in the space
\begin{equation}\label{X-def}
 X^{s,p} \coloneqq C^0( [0,T] \to L^p \cap H^s \cap B_2(\M) ) \cap C^1( [0,T] \to H^{s-1} \cap B_2(\M) ),
\end{equation}
thus the map $t \mapsto \omega(t)$ will be required to be a continuous map into $L^p \cap H^s \cap B_2(\M)$, and a continuously differentiable map into $H^{s-1} \cap B_2(\M)$, where we of course give $L^p \cap H^s \cap B_2(\M)$ the topology generated by the $L^p$ and $H^s$ norms, and similarly for $H^{s-1} \cap B_2(\M)$.  Note from Sobolev embedding and the hypothesis $s > \frac{d}{2}+1$ that this implies that $\omega \in C^1_{t,x}([0,T] \times \M)$.  Similarly, we will place the velocity field $u$ in the space
\begin{equation}\label{Y-def}
 Y^{s,p} \coloneqq C^0( [0,T] \to \dot W^{1,p} \cap \dot H^{s+1} \cap B^1(\M) ),
\end{equation}
where $\dot W^{1,p}$ is the Sobolev space of functions (or vector fields, in this case) whose distributional derivative lies in $L^p$; for technical reasons relating to the slow decay of the Biot-Savart law (and its generalisations) at infinity, we do not insist that $u$ itself lies in $L^2$ or $L^p$.  Note that the hypothesis $u \in Y^{s,p}$ and Sobolev embedding\footnote{More precisely, observe from Bernstein's inequality and the hypothesis $p<m$ that $\dot W^{1,p}$ embeds into $C^1_x$ at low frequencies, and $\dot H^{s+1}$ embeds into $C^1_x$ at high frequencies.} implies that $u \in C^0_t C^1_x([0,T] \times \M)$.  This is sufficient regularity to interpret the equation \eqref{vort-1} in the classical sense, as a ``strong'' solution rather than merely a weak distributional solution.

To interpret \eqref{vort-2}, we will of course need some regularity hypotheses on the operator $A\colon B_2(\M) \to \Gamma^2(\M)$.  We will adopt the following choice of hypotheses.  We use $X \lesssim Y$ or $X=O(Y)$ to denote the estimate $|X| \leq CY$, where $C$ is a constant; if $C$ is to depend on one or more parameters, we indicate this by subscripting the $\lesssim$ or $O()$ notation appropriately.

\begin{definition}[Reasonable operator]\label{reason-def}  Let $M$ be a natural number, and let $\M = \R^m \times (\R/\Z)^{d-m}$ for some $2 \leq m \leq d$.  A vector potential operator $A\colon C^\infty_c \cap B_2(\M) \to \Gamma^2(\M)$ is said to be \emph{$M$-reasonable} if it has an integral representation 
\begin{equation}\label{kernel-rep}
 A \omega(x) = \int_{\M} K(x,y) \omega(y)\ d\operatorname{vol}(y)
\end{equation}
where the (tensor-valued) kernel $K$ is smooth for $x \neq y$ and obeys the estimates
\begin{equation}\label{nij}
 |\nabla^i_x \nabla^j_y K(x,y)| \lesssim_{d,M,A} \max( |x-y|^{-i-j-d+2}, |x-y|^{-i-j-m+2} )
\end{equation}
for all $x \neq y$ and all $0 \leq i,j \leq M$ with $i+j \geq 1$, where $|x-y|$ denotes the distance between $x$ and $y$ in $\M$ with respect to the Euclidean metric $\eta$; furthermore we assume that
\begin{equation}\label{pass}
\| \nabla^2 A \omega \|_{H^{k}(\M)} \lesssim_{d,M,A} \| \omega \|_{H^k(\M)}
\end{equation}
for all $0 \leq k \leq M$ and all $\omega \in C^\infty_c \cap B_2(\M)$.  In particular, $\delta A$ can be continuously extended to a map from $L^2 \cap B_2(\M)$ to $\dot H^1 \cap \Gamma^1(\M)$.
\end{definition}

\begin{remark}
The right-hand side of \eqref{nij} has the geometric interpretation of being comparable to $\frac{1}{|x-y|^{i+j-2} \operatorname{vol}( B_\M( 0, |x-y| ) )}$, where $\operatorname{vol}( B_\M( 0, |x-y| ) )$ is the volume of the ball in $\M$ centred at the origin with radius $|x-y|$.  In particular, the operator $\nabla^2 A$ is a singular integral operator whose kernel obeys estimates of Calder\'on-Zygmund type, which is of course consistent with the hypothesis \eqref{pass}.
\end{remark}

From Plancherel's theorem and the fundamental solution for the Laplacian on $\M$, we see that the vector potential operator $\tilde \eta^{-1} \Delta^{-1}$ associated to the true Euler equations obeys these requirements whenever $m \geq 2$.  On the other hand, the vector potential operator \eqref{sqg-a} associated to the SQG equations do not, as in this case $A$ is only smoothing of order $1$ rather than $2$.  With the assumption that $A$ is $M$-reasonable for some sufficiently large $M$, one can now interpret \eqref{vort-2} rigorously when $\omega \in X$ and $u \in Y$.

Using mostly standard ``quasilinear well-posedness'' energy methods (following the basic approach of Bona and Smith \cite{bona}, as described in the survey \cite{tzvetkov}), we can prove the following classical local existence theorem:

\begin{theorem}[Local existence]\label{lest}  Let $\M = \R^m \times (\R/\Z)^{d-m}$ for some $d \geq 2$ and $0 \leq m \leq d$.  Let $s > \frac{d}{2}+1$ be an integer, let $1 < p \leq 2$ be such that $p<m$, and let $A\colon C^\infty_c \cap B_2(\M) \to \dot H^1 \cap \Gamma^2(\M)$ be a $s+1$-reasonable vector potential operator.  Then for any $M > 0$ there exists $T>0$ such that for any $\omega_0 \in L^p \cap H^s \cap B_2(\M)$ with $\|\omega_0\|_{L^p(\M)} + \| \omega_0 \|_{H^s(\M)} < M$, there exists a unique classical solution $\omega \in X^{s,p}$ and $u \in Y^{s,p}$ (with $X^{s,p},Y^{s,p}$ defined in \eqref{X-def}, \eqref{Y-def} respectively) obeying the generalised Euler equations \eqref{vort-1}, \eqref{vort-2}.  Furthermore the solution $\omega$ depends continuously on $\omega_0$ in the indicated topologies.

Finally, we have the Beale-Kato-Majda blowup criterion \cite{bkm}: if the solution constructed above cannot be continued beyond a time $0 < T_* < \infty$ in the indicated function spaces, then
$$ \int_0^{T_*} \| \omega(t) \|_{L^\infty(\M)}\ dt = \infty.$$
\end{theorem}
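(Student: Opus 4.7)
The plan is to follow the standard Bona--Smith quasilinear scheme: derive uniform $H^s$ a priori bounds via commutator estimates, construct solutions by regularising the initial data and passing to a limit, establish uniqueness and continuous dependence in the strong topology by a difference estimate in a lower regularity norm, and obtain the Beale--Kato--Majda criterion from a logarithmic Sobolev refinement of the energy estimate.

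The key a priori bound is the following. Writing $\mathcal{L}_u \omega = (u\cdot\nabla)\omega + R(\nabla u,\omega)$ in coordinates, where $R$ is a pointwise bilinear form, one computes
\begin{equation*}
\tfrac{d}{dt}\|\omega\|_{H^s}^2 \lesssim_s \|\nabla u\|_{L^\infty}\|\omega\|_{H^s}^2,
\end{equation*}
using the Kato--Ponce commutator estimate on $[\Lambda^s, u\cdot\nabla]\omega$ (where $\Lambda = (1-\Delta)^{1/2}$) together with the divergence-free condition to eliminate the boundary term in the leading transport part; the remainder $R(\nabla u,\omega)$ is a Moser-type product and is handled similarly. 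The $L^p$ norm is propagated by $\|\omega(t)\|_{L^p}\leq \|\omega_0\|_{L^p}\exp(C\int_0^t \|\nabla u\|_{L^\infty}\,d\tau)$ via duality against the divergence-free flow. Since $A$ is $(s+1)$-reasonable, \eqref{pass} with $k=s$ together with $s>\tfrac{d}{2}+1$ and Sobolev embedding gives $\|\nabla u\|_{L^\infty}\lesssim \|\nabla^2 A\omega\|_{H^{s-1}}\lesssim\|\omega\|_{H^{s-1}}$, so Gronwall produces a time $T=T(M)>0$ on which any smooth solution with $\|\omega_0\|_{L^p\cap H^s}\leq M$ stays bounded by, say, $2M$ in the same norm.

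To construct a solution, regularise the initial data by a Littlewood--Paley cutoff $\omega_0^{(N)} := P_{\leq N}\omega_0$ (projected onto closed $2$-forms), which is smooth, compactly supported up to a Schwartz tail, and lies in $L^p\cap H^{s'}$ for all $s'$. Running Picard iteration on the regularised system (where $u^{(N)} = \delta A\omega^{(N)}$ is now smooth, so the system is locally Lipschitz in $H^s$) yields smooth solutions $\omega^{(N)}$ on a uniform time interval $[0,T]$. A difference estimate in $L^2$, bounding $\tfrac{d}{dt}\|\omega^{(N)}-\omega^{(N')}\|_{L^2}^2$ in terms of the uniform $H^s$ bound, gives a Cauchy sequence in $C^0_t L^2$; interpolation with the uniform $H^s$ bound upgrades convergence to $C^0_t H^{s'}$ for any $s'<s$, and a standard lower-semicontinuity/weak-* compactness argument places the limit in $L^\infty_t(L^p\cap H^s)$. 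Strong time-continuity in the $L^p\cap H^s$ topology and strong continuous dependence on the data are then the Bona--Smith step: one compares the true solution $\omega$ to the solution starting from $\omega_0^{(N)}$ in a lower norm, gaining a factor $N^{-\epsilon}$ from the regularity of $\omega_0$, and combines this with the uniform high-regularity bound on the regularised solutions to upgrade weak to strong continuity. Uniqueness is immediate from the same $L^2$ difference estimate.

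For Beale--Kato--Majda, suppose the solution cannot be continued past $T_*<\infty$; by the construction above this forces $\|\omega(t)\|_{L^p\cap H^s}\to\infty$. Since $\nabla^2 A$ has a Calder\'on--Zygmund kernel by \eqref{nij} and is bounded on $H^k$ by \eqref{pass}, one obtains the logarithmic Sobolev estimate
\begin{equation*}
\|\nabla u\|_{L^\infty}\lesssim 1 + \|\omega\|_{L^\infty}\bigl(1+\log(e+\|\omega\|_{H^s})\bigr)+\|\omega\|_{L^p},
\end{equation*}
in the spirit of Beale--Kato--Majda. Inserting this into the $H^s$ energy estimate and applying a double-exponential Gronwall argument shows that $\|\omega(t)\|_{H^s}$ stays finite on $[0,T_*]$ provided $\int_0^{T_*}\|\omega(t)\|_{L^\infty}\,dt<\infty$, contradicting the blowup.

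The main obstacles are twofold. First, the Bona--Smith continuous dependence argument must be carried out carefully in the full $L^p\cap H^s$ topology, and the low-frequency behaviour of $u = \delta A\omega$ (which a priori lives only in $\dot W^{1,p}\cap \dot H^{s+1}$, not $L^p$) is precisely what forces the hypothesis $p<m$ and the choice of space $Y^{s,p}$. Second, verifying the log-Sobolev inequality for the abstract reasonable operator $A$, rather than the explicit Riesz transforms of the true Euler case, requires exploiting both the kernel estimates \eqref{nij} (to get the weak-type/BMO bound on $\nabla^2 A$) and \eqref{pass} (to control the high-frequency part), and is the only place where the full strength of Definition \ref{reason-def} is used.
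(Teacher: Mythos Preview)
Your overall strategy---energy method with Kato--Ponce commutators, Bona--Smith for continuous dependence, logarithmic inequality for BKM---is exactly the paper's approach, and your identification of the two genuine obstacles is accurate.  However, two steps in your outline are not quite right as written.

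First, the construction. Regularising the initial data by $P_{\leq N}$ does \emph{not} make the system locally Lipschitz in $H^s$: the map $\omega\mapsto(\delta A\omega\cdot\nabla)\omega$ still loses a derivative, so Picard iteration does not close in any fixed $H^s$ regardless of how smooth $\omega_0$ is.  You need to regularise the \emph{equation}, not just the data.  The paper does this by adding a viscosity term $\eps\Delta\omega$, obtaining a semilinear parabolic system for which contraction mapping works, then deriving $\eps$-uniform $H^s$ bounds and passing to a weak limit via Rellich compactness.  (Alternatively one can mollify the nonlinearity, e.g.\ replace $u$ by $J_\eps\delta A J_\eps\omega$, but the paper chooses viscosity.)

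Second, the $L^p$ norm is not just a technical nuisance at the end but is woven through the entire argument when $m=2$.  Your ``difference estimate in $L^2$'' requires controlling $\|\delta A\alpha\|_{L^\infty}$ or $\|\delta A\alpha\|_{L^2}$ in terms of $\|\alpha\|_{L^2}$, and neither holds at low frequencies when $m=2$ (since $\dot H^1(\R^2)$ embeds into neither $L^\infty$ nor $L^2$).  The paper runs the uniqueness argument in $L^p$ rather than $L^2$, splitting $\delta A\alpha$ into high and low frequencies and using $\|P_{\mathrm{lo}}\nabla A\alpha\|_{L^q}\lesssim\|\nabla^2 A\alpha\|_{L^p}$ with $1/q=1/p-1/m$ (which is where $p<m$ enters).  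The Bona--Smith step is correspondingly more intricate: one first propagates the $L^p$ norm of the difference, then the $L^2$ norm (with an extra $\frac{1}{N}\|\alpha(0)\|_{L^p}+\sqrt{\log N}\|\alpha\|_{L^2}$ term coming from a dyadic decomposition of the low frequencies of $\nabla A\alpha$), then $H^{s-1}$, and finally $H^s$ using the extra regularity of $\omega'$.  You flag this as an obstacle, which is correct, but your outline of the difference estimate does not actually carry it out.
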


We prove this theorem in Section \ref{lest-sec}.  The argument is straightforward when $m \geq 3$, in which case the $L^p$ norm plays no essential role.  However, the situation becomes delicate in the $m=2$ case, basically because the generalised Biot-Savart operator  $\delta A$ that appears in the vorticity-vector potential formulation no longer maps $H^s$ into $L^\infty$ at low frequencies, and one must take advantage of the $L^p$ norm and Littlewood-Paley decomposition to close the argument.  A slightly different energy method approach to these equations is also given in \cite[Chapter 3]{majda}.  There is also a particle trajectory method to construct solutions to the true Euler equations using the contraction mapping theorem rather than quasilinear method; see e.g. \cite[Chapter 4]{majda}.  However, we were unable to extend it to this general context unless one imposed a translation-invariance hypothesis on the vector potential operator $A$, as the estimates required for the contraction mapping theorem appeared to fail if this hypothesis was not enforced.  It may also be possible to extend the local existence arguments in \cite{ebin-marsden} for the true Euler equations, based on the aforementioned interpretation of these equations as a geodesic flow, to the setting of the generalised Euler equations.

\begin{remark}
There are several refinements of the Beale-Kato-Majda blowup criterion in the literature \cite{cfm}, \cite{chae-blow}, \cite{dhy}, \cite{preston}.  It seems likely to the author that analogues of at least some of these criteria can also be established for the generalised Euler equations (since the generalised Biot-Savart law obeys very similar estimates to the true Biot-Savart law), although we have not attempted to do so here.
\end{remark}

\subsection{Finite time blowup}  We now turn to the main focus of this paper, namely the establishment of finite time blowup results for generalised Euler equations.

It is a notorious open problem as to whether smooth solutions to the three-dimensional true Euler equations (with suitable decay at infinity) can be extended globally in time, although it is widely expected that finite time blowup can occur for this system; see for instance the surveys \cite{chae}, \cite{constantin} and recent numerical evidence for blowup in \cite{lh, lh2}, as well as a proposed blowup mechanism in \cite{brenner}.

As the global regularity problem for the true Euler equations is difficult to resolve directly, there have been a number of studies of more tractable models of the Euler equations.  In particular, finite time blowup has been established for a number of equations that capture some, though not all, of the features of the true Euler-type equations.  For instance:

\begin{itemize}
\item In \cite{kp} a dyadic ``shell model'' of the Euler equations was introduced, and shown to have solutions that blow up in finite time; see also \cite{tao} for a variant of this construction that allows for Navier-Stokes type dissipation.  These shell models have the same scaling features as the true Euler equations in three dimensions, as well as energy conservation, but do not have the vorticity transport equation.  
\item In \cite{clm}, a one dimensional model for the vorticity equation of the true Euler equations was introduced, and again shown to have solutions that blow up in finite time; see also the later papers \cite{greg1}, \cite{greg2}, \cite{osw}, \cite{w} for further analysis of this model and its variants.  These equations capture many of the features of the Euler equations, such as energy conservation, vorticity stretching and an Euler-Poincar\'e Lagrangian formulation, but do not correspond to incompressible flows (the formal Euler-Poincar\'e geodesic flow is on the space of all diffeomorphisms of a manifold, rather than all volume-preserving diffeomorphisms).  
\item In \cite{hl}, \cite{hsw}, a model of the axially symmetric true Euler equations with swirl was studied in which the convection term was removed, and solutions constructed that blow up in finite time.  This system of equations still conserves energy, but does not appear to have an Euler-Poincar\'e formulation, or a vorticity transport equation analogous to \eqref{vort}.  
\item Further 1D models of axially symmetric true Euler equations outside of a cylindrical obstacle were studied in \cite{cky}, \cite{hl}, \cite{chklsy}, again with a number of finite time blowup results; these systems have some remnant of circulation conservation (through the transport of the ``temperature'' field $\theta$), but do not appear to have an Euler-Poincar\'e formulation that involves an incompressible flow.
\end{itemize}

In this paper we establish some finite time blowup results in three spatial dimensions for generalised Euler equations, with reasonable vector potential operator $A$.  In order to maximise the resemblance of these generalised Euler equations to the true Euler equations, it is desirable to ensure that $A$ be formally self-adjoint, and for $A$ to furthermore be ``positive definite'' in the sense that the conserved energy \eqref{eht} to be comparable to $\| \omega \|_{\dot H^{-1}}^2$ (or to $\| u \|_{L^2}^2$).  It would also be desirable to construct blowup solutions that are well localised in space, for instance by requiring the initial vorticity to be compactly supported.  Finally, one would like to demonstrate some stability in the blowup, by showing that blowup persists under some reasonable perturbation of the initial data.

Unfortunately, we were not able to construct a blowup solution in which all of these desirable criteria were satisfied simultaneously.  However, we were able produce three different constructions which enjoyed various subsets of this set of desiderata. Taken together, they suggest that one should not be able to establish global regularity properties for the true Euler equations merely by using properties that are shared with the generalised Euler equations, such as energy conservation, the Kelvin circulation theorem, and function space estimates for the vector potential operator.

Our first construction has compactly supported initial data (and a stable blowup), but a non-self-adjoint (and non-positive definite) vector potential operator $A$:

\begin{theorem}[Stable non-self-adjoint blowup]\label{first-blow}  Let $\M = \R^3$.  Then there exists a $100$-reasonable vector potential operator $A\colon C^\infty_c \cap B_2(\M) \to \Gamma^2(\M)$ and initial data $\omega_0 \in C^\infty_c \cap B_2(\M)$ such that there is no solution $\omega \in X^{10,2}$, $u \in Y^{10,2}$ with initial data $\omega_0$ on the time interval $[0,1]$.
\end{theorem}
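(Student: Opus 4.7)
The strategy is to exploit the freedom in choosing $A$ by taking it to be a rank-one operator that reduces the vorticity equation to a scalar Riccati ODE with finite-time blowup, and then to invoke the Beale--Kato--Majda criterion from Theorem~\ref{lest} to conclude. The rank-one structure $A\omega=\Phi(\omega)V$ forces the dynamics onto a one-dimensional invariant subspace as soon as the initial datum is a Lie-eigenform of $\delta V$, and the Lie transport then produces a genuine quadratic nonlinearity in the scalar amplitude.

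Concretely I would look for a solution of the form $\omega(t,x)=c(t)\omega_0(x)$, where $\omega_0\in C^\infty_c\cap B_2(\R^3)$ is a fixed closed compactly supported $2$-form and $c(t)$ is a scalar.  Take
\[
A\omega\;:=\;\Phi(\omega)\,V,\qquad \Phi(\omega)\;:=\;\int_{\R^3}\omega\wedge\theta,
\]
for a smooth compactly supported $1$-form $\theta$ and a smooth compactly supported $2$-vector field $V$, and write $u^{*}:=\delta V$ and $\Phi_0:=\Phi(\omega_0)$.  Suppose $(\omega_0,u^{*})$ can be arranged to be a Lie-eigenform pair, ${\mathcal L}_{u^{*}}\omega_0=\lambda\omega_0$ for some nonzero constant $\lambda$.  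Then along the ansatz the induced velocity is $u(t)=c(t)\Phi_0\,u^{*}$, and substitution into \eqref{vort-1} yields
\[
\dot c\,\omega_0\;=\;-{\mathcal L}_u\omega\;=\;-c\,{\mathcal L}_{c\Phi_0u^{*}}\omega_0\;=\;-\lambda\Phi_0\,c^2\,\omega_0,
\]
i.e.\ the Riccati equation $\dot c=-\lambda\Phi_0\,c^2$.  Choosing signs so $\lambda\Phi_0<0$ and the initial amplitude $c_0$ so that $|\lambda\Phi_0|c_0>1$, the explicit solution $c(t)=c_0/(1-|\lambda\Phi_0|c_0 t)$ blows up at some $t_{*}<1$, with $\|\omega(t)\|_{L^\infty}=c(t)\|\omega_0\|_{L^\infty}\sim(t_{*}-t)^{-1}$, so $\int_0^{t_{*}}\|\omega(t)\|_{L^\infty}\,dt=\infty$ in accordance with Beale--Kato--Majda.

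For $A$ to be $100$-reasonable it suffices to take $V$ and $\theta$ smooth and compactly supported, arranging $\delta V=u^{*}$ only on a neighbourhood of $\operatorname{supp}\omega_0$ and extending $V$ arbitrarily outside; since $\omega_0$ and all its derivatives vanish off $\operatorname{supp}\omega_0$, the eigenform identity $\mathcal L_{u^{*}}\omega_0=\lambda\omega_0$ then holds globally.  The kernel factors as $K(x,y)=V(x)\otimes\tilde\eta^{-1}\theta(y)$ up to metric identifications, is smooth and compactly supported in both variables, so the derivative bounds \eqref{nij} and Sobolev estimate \eqref{pass} are trivial for any $M$.  The Riccati solution $c(t)\omega_0$ then belongs to $X^{10,2}\times Y^{10,2}$ on every subinterval of $[0,t_{*})$, and by the uniqueness clause of Theorem~\ref{lest} is the unique classical solution with initial datum $c_0\omega_0$; since $t_{*}<1$ it cannot be extended to $[0,1]$.

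The main obstacle is the explicit construction of the Lie-eigenform pair $(\omega_0,u^{*})$.  Writing $\omega_0=d\gamma_0$ with $\gamma_0\in C^\infty_c\cap\Lambda_1(\R^3)$ (available because $H^2_c(\R^3)=0$), the identity ${\mathcal L}_{u^{*}}\omega_0=d\iota_{u^{*}}\omega_0$ rewrites the eigenform condition as $\iota_{u^{*}}\omega_0=\lambda\gamma_0+dh$ for some function $h$, a mildly overdetermined pointwise system for the three components of $u^{*}$ together with the divergence-free constraint $\delta u^{*}=0$.  A natural route is to work within a symmetric ansatz (axisymmetric with swirl, or helical) that reduces the condition to a tractable two-dimensional PDE and to produce an explicit compactly supported solution with $\operatorname{supp}\omega_0$ invariant under the flow of $u^{*}$ and with $\lambda\Phi_0$ of the correct sign.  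This delicate but concrete construction is the technical heart of the argument.
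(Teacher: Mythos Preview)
Your approach is entirely different from the paper's, and the gap you yourself flag---the existence of the Lie-eigenform pair $(\omega_0,u^{*})$---is the whole difficulty, not a detail. The constraint $\mathcal{L}_{u^{*}}\omega_0=\lambda\omega_0$ with $\lambda\neq 0$ and $\omega_0\in C^\infty_c$ forces the set $\{\omega_0\neq 0\}$ to be invariant under the flow $\Psi_s$ of $u^{*}$ (since $(\Psi_s)_*\omega_0=e^{-\lambda s}\omega_0$), so you need a compact set with nonempty interior that is invariant under a volume-preserving flow. By Poincar\'e recurrence almost every point $p$ in that set is recurrent, and along a returning sequence $s_n\to\infty$ the relation $(\Psi_{s_n})_*\omega_0=e^{-\lambda s_n}\omega_0$ at $p$ forces $e^{\lambda s_n}$ to be (asymptotically) an eigenvalue of the tangent map $D\Psi_{s_n}(p)$ with eigenvector tied to $\omega_0(p)$, uniformly across the support. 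This is a very rigid global condition---much stronger than a local normal-form calculation---and neither the axisymmetric nor the helical reduction you mention produces such a pair; in the axisymmetric case the eigenvalue comes out $0$ or purely imaginary. You would need to actually exhibit $(\omega_0,u^{*})$, and I do not see how. Absent that, the Riccati reduction never starts.

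For comparison, the paper does not attempt to solve the equation or find any exact ansatz. It builds a multi-scale (Littlewood--Paley type) operator $A$ whose associated velocity $\delta A\omega$ can be \emph{computed exactly} in a small ``neck'' region solely from the \emph{circulation} of $\omega$ through certain disks, via the Kelvin circulation theorem (Proposition~\ref{Fcl}(i)). Circulation is conserved regardless of how complicated the solution becomes, so no control on $\omega$ beyond its support is needed. The argument is then a barrier/contradiction one: assume a solution exists on $[0,1]$, trap the support of $\omega(t)$ inside a shrinking hyperboloid $\Omega(t)$, use conserved circulation to evaluate $u$ on $\partial\Omega(t)$, and show the barrier collapses to a point before $t=1$. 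This sidesteps entirely the need for an eigenform or any explicit solution, which is why it succeeds where your rank-one/Riccati route stalls.
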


We prove this result in Section \ref{3d-nonself}.  The exponents $10,100$ here have no particular significance and are chosen primarily for sake of concreteness. The blowup is probably\footnote{Because our argument will be a proof by contradiction, we will not actually be able to guarantee that the solution blows up as intended; it may blow up at an earlier time than the formation of the neck pinch due to other instabilities in the dynamics.  However, the ``neck pinch'' scenario is what the blowup \emph{should} be, if it is not pre-empted by some earlier, unforeseen blowup.  Similarly for the other finite time blowup results in this paper.} of a ``neck pinch'' nature, in which the vortex lines focus at a point (see Figure \ref{fig:firstblow}); the non-self-adjoint vector potential $A$ is designed to keep transporting the vorticity ever closer to that point.  We will not be able to achieve any fine level of control on the dynamics of this finite time blowup, but fortunately we can use the conservation of circulation, combined with a careful choice of $A$, to evaluate the velocity field $u$ near the blowup point and close the argument.  As can be seen from the proof, the blowup in Theorem \ref{first-blow} is stable in the sense the initial data $\omega_0$ can range in an open set in $C^\infty_c \cap B_2$; any smooth closed perturbation of the data supported in a slight enlargment of the support of $\omega_0$ will still lead to a solution that blows up in finite time.

\begin{figure} [t]
\centering
\includegraphics{./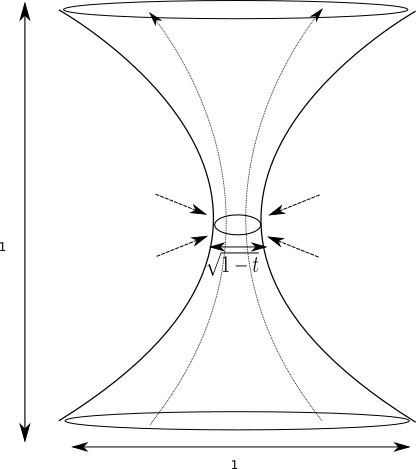}
\caption{A schematic depiction of a  ``neck pinch'' blowup of the type expected from the construction in Theorem \ref{first-blow} (ignoring a technical parameter $M$ appearing in the proof).  At times $t$ close to the blowup time (assumed here to be $T_*=1$), the vortex streamlines (shown here as dotted curves) travel through a truncated hyperboloid region that is of unit diameter at its ends, but ``pinches'' through a narrow disk of radius comparable to $\sqrt{1-t}$, so that the vorticity increases to be comparable to $\frac{1}{1-t}$ near this disk, as per the Kelvin circulation theorem.  (The vortex streamlines may connect back to themselves outside of this hyperboloid region; this is not pictured in the figure.)  The velocity field, depicted here as dashed arrows, points inwards with magnitude comparable to $\frac{1}{\sqrt{1-t}}$ in the pinching region.  At time $t=1$, the vorticity becomes infinite at a point, causing blowup.}
\label{fig:firstblow}
\end{figure}

As the initial data $\omega_0$ and operator $A$ constructed in Theorem \ref{first-blow} are compactly supported in space, it is an easy matter to extend the above theorem to $\R^2 \times \R/\Z$; by adding compact dummy dimensions one can also extend to the case $d \geq 3$ and $2 \leq m \leq 3$.  It is likely that one can in fact obtain a result of the above form for arbitrary $d \geq 3$ and $2 \leq m \leq d$ (increasing the exponents $10$ and $100$ as necessary), but we will not do so here.

The blowup in Theorem \ref{first-blow} is perhaps unsurprising, given that the vector potential operator $A$ was not self-adjoint and so did not even have a conserved energy.  Our second blowup result involves a vector potential operator $A$ which is now self-adjoint and positive definite.  However, to retain compact support of the data, it becomes convenient to work in the domain $\R^2 \times \R/\Z$ rather than $\R^3$; also, the blowup is less stable, as we require the initial data to be translation-invariant in the $\R/\Z$ direction (in order to reduce matters to a two-dimensional problem).

\begin{theorem}[Self-adjoint partially periodic blowup]\label{second-blow}  Let $\M = \R^2 \times \R/\Z$ and let $\eps>0$.  Then there exists a $100$-reasonable, formally self-adjoint vector potential operator $A\colon C^\infty_c \cap B_2(\M) \to \Gamma^2(\M)$ obeying the positive definiteness property
\begin{equation}\label{posdef}
(1-\eps) \| \omega \|_{\dot H^{-1}(\M)}^2 \leq \int_{\M} \langle \omega, A \omega \rangle\ d \operatorname{vol}
\leq (1+\eps) \| \omega \|_{\dot H^{-1}(\M)}^2 
\end{equation}
for all $\omega \in C^\infty_c \cap B_2(\M)$, as well as an initial vorticity $\omega_0 \in C^\infty_c \cap B_2(\M)$ such that there is no solution $\omega \in X^{10,2}$, $u \in Y^{10,2}$ with initial data $\omega_0$ on the time interval $[0,1]$.
\end{theorem}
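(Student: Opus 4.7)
The plan is to exploit translation-invariance in the periodic direction $x^3$ to reduce the problem to an essentially two-dimensional one, and then to engineer $A$ so as to produce a neck-pinch blowup analogous to Theorem \ref{first-blow} while simultaneously satisfying self-adjointness and the positive definiteness bound \eqref{posdef}. The crucial observation is that \eqref{posdef} only controls the diagonal quadratic form $\omega \mapsto \int_\M \langle\omega,A\omega\rangle\,d\operatorname{vol}$, so there is substantial freedom to modify $A$ in off-diagonal or spatially localized ways while keeping this quadratic form close to that of $\tilde\eta^{-1}\Delta^{-1}$.

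First I would restrict to initial data $\omega_0 \in C^\infty_c \cap B_2(\M)$ that is independent of $x^3$, and choose $A$ to commute with $\partial_3$-translations so that this ansatz is preserved. Under the ansatz a closed $2$-form $\omega$ is described by three scalar fields $\omega_{12},\omega_{13},\omega_{23}$ on $\R^2$ subject to the single constraint $\partial_1 \omega_{23} = \partial_2 \omega_{13}$, and the velocity $u = \delta A\omega$ is $x^3$-independent with three nontrivial components $(u^1,u^2,u^3)$. Computing ${\mathcal L}_u\omega$ from the Cartan formula then yields a 2.5D-type system on $\R^2$ that couples the axial vorticity $\omega_{12}$ to the horizontal vorticity $(\omega_{13},\omega_{23})$, with a vortex-stretching contribution driven by horizontal gradients of $u^3$. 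For $A = \tilde\eta^{-1}\Delta^{-1}$ this reduces to the standard 2.5D Euler equations, which do not blow up; so the blowup must come from a genuine modification of $A$.

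To force blowup I would take $A = \tilde\eta^{-1}\Delta^{-1} + B$, where $B$ is a self-adjoint, $100$-reasonable perturbation whose contribution to the quadratic form in \eqref{posdef} is bounded by $\eps\|\omega\|_{\dot H^{-1}}^2$. The operator $B$ is chosen so that, on the translation-invariant sector, the corresponding reduced dynamics exhibit a neck-pinch configuration as in Figure \ref{fig:firstblow}: horizontal vortex streamlines are forced inward through a thin disk near a chosen focal point, and the stretching term causes the horizontal vorticity there to grow. Concretely one modifies the Green's kernel of $\Delta^{-1}$ in a small localized region near the focal point in a way that preserves the symmetry $K(x,y) = K(y,x)$, costs only $O(\eps)$ in the quadratic form, but produces an $O(1)$ inward-pointing velocity perturbation on the particular vorticity profile used for the initial data. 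Kelvin's circulation theorem (Proposition \ref{Fcl}(i)), applied to a shrinking family of loops encircling the pinch, then forces vorticity to concentrate like $(1-t)^{-1}$, contradicting the Beale--Kato--Majda blowup criterion of Theorem \ref{lest} if the solution were to persist up to $t=1$.

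The main obstacle is to reproduce the essentially non-self-adjoint ``inward dragging'' mechanism of Theorem \ref{first-blow} using a strictly self-adjoint and positive definite $A$. In the non-self-adjoint case one can prescribe the kernel of $A$ freely to aim $u$ at the focal point; here the imposed symmetry $K(x,y) = K(y,x)$ forces a reciprocity that couples the ``source'' and ``sink'' roles of each pair of points, and one must verify that a symmetric perturbation of size $O(\eps)$ in the energy norm can nevertheless produce an asymmetric, inward-focusing velocity on the chosen vorticity profile. Establishing this, together with the required $M$-reasonable kernel estimates \eqref{nij}--\eqref{pass} for the perturbation, is likely to require a careful bootstrap argument controlling the duration of the pinching regime, rather than the direct pointwise kernel construction available in the non-self-adjoint setting of Theorem \ref{first-blow}.
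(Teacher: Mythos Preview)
Your reduction to the $x^3$-invariant sector and the decomposition $A = \tilde\eta^{-1}\Delta^{-1} + B$ are both in line with the paper's approach, and you have correctly identified the central difficulty: how can a self-adjoint $A$ produce the one-way ``inward dragging'' that drove the non-self-adjoint blowup of Theorem \ref{first-blow}? However, your proposal does not resolve this difficulty; it only names it and gestures at a ``careful bootstrap argument''. The obstacle is real: a small symmetric kernel perturbation $K(x,y)=K(y,x)$ of size $O(\eps)$ in $\dot H^{-1}$ cannot, by the same symmetry, be expected to generate an $O(1)$ asymmetric velocity on a generic profile, and you give no mechanism by which the reciprocity is broken.

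The paper's resolution is a structural trick you are missing. Rather than perturbing the kernel symmetrically in physical space, one exploits the fact that $\omega$ has \emph{several components}: one builds the perturbation $B$ so that a genuinely non-self-adjoint scalar operator $A_0$ acts on the $\omega_{13},\omega_{23}$ components while its adjoint $A_0^*$ acts on the $\omega_{12}$ component (see \eqref{awdef}). The full $2$-form operator $\tilde A$ is then formally self-adjoint because the cross terms in the quadratic form pair $A_0$ with $A_0^*$. One then chooses initial data of the special form $\omega_0 = d\tilde\theta_0 \wedge dx^3$, for which $\omega_{12}\equiv 0$; on this ansatz the $A_0^*$ terms are inert and the reduced dynamics for the scalar $\theta$ is exactly the non-self-adjoint SQG-type active scalar equation of Section \ref{nonself}, whose blowup is Proposition \ref{sqgb}. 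Thus self-adjointness of $A$ does \emph{not} force self-adjointness of the reduced $2$D operator, and no bootstrap is needed. Note also that the resulting blowup is on a copy of $\R/\Z$ (Figure \ref{fig:secondblow}), not at a point as in your neck-pinch scenario.
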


We prove this result in Section \ref{embed-sec}.  The main idea is to work in a ``two-and-a-half-dimensional'' ansatz in which the velocity field $u$ and vorticity $\omega$ are invariant with respect to translations in the $x^3$ direction, with the $dx^1 \wedge dx^2$ component of the vorticity vanishing, but in which the third component $u^3$ of the velocity is allowed to be non-zero.  If the vector potential operator $A$ is chosen properly, it turns out that this component $u^3$ obeys an SQG-like active scalar equation on $\R^2$; furthermore, the vector potential operator $A_0$ for this SQG-like equation is no longer required to be self-adjoint.  It is then possible to modify the three-dimensional construction in Theorem \ref{first-blow} to create a two-dimensional blowup for this component $u^3$, which then implies blowup for the original fields $u,\omega$; in fact the two-dimensional case is a little easier than the three-dimensional one, and is carried out in Section \ref{nonself}.  Due to the dimensional reduction, the solution constructed in Theorem \ref{second-blow} will (probably) blow up on a one-dimensional set, namely a copy of $\R/\Z$ in $\R^2 \times \R/\Z$, in contrast to the solution in Theorem \ref{first-blow} which (probably) blows up at a point.  A schematic depiction of what the blowup should look like in this construction is given in Figure \ref{fig:secondblow}.
  
	\begin{figure} [t]
\centering
\includegraphics[width=0.7\textwidth]{./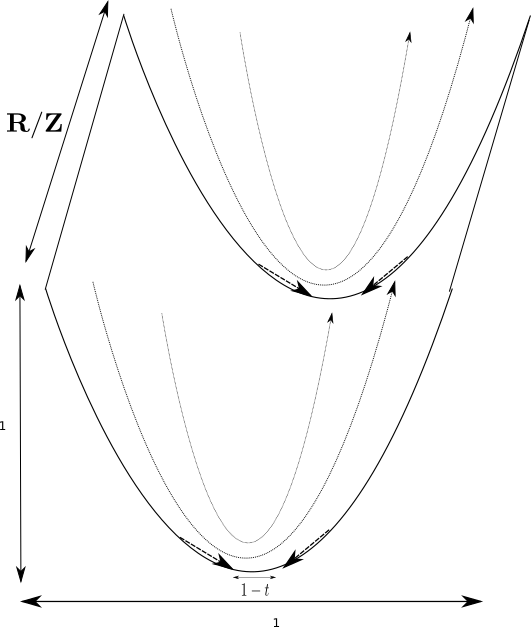}
\caption{A schematic depiction of a  ``two-and-a-half-dimensional'' blowup of the type expected from the construction in Theorem \ref{second-blow} (again ignoring a technical parameter $M$).  At times $t$ close to the blowup time (assumed here to be $T_*=1$), the vortex streamlines (shown here as dotted curves) are horizontal curves that are pinched into a narrow region of horizontal diameter about $1-t$, wherein the vorticity increases to about $\frac{1}{1-t}$ as per the Kelvin circulation theorem.  The vorticity is invariant with respect to vertical translations, which in the case of the true Euler equations would mean that the velocity field would be purely vertical (and invariant along streamlines), and the solution would be stationary.  Here, we work with a perturbation of the true Euler equations that creates some horizontal velocity in the pinching region (of magnitude comparable to $1$) that causes the vorticity to pinch further.  At time $t=1$, the vorticity becomes infinite on a vertical line (or more precisely, a copy of $\R/\Z$), causing blowup.}
\label{fig:secondblow}
\end{figure}

Finally, we remove the periodic dimension from Theorem \ref{second-blow}:

\begin{theorem}[Self-adjoint non-periodic blowup]\label{third-blow}  Let $\M = \R^3$ and let $\eps>0$.  Then there exists a $100$-reasonable, formally self-adjoint vector potential operator $A\colon C^\infty_c \cap B_2(\M) \to \Gamma^2(\M)$ obeying the positive definiteness property
\begin{equation}\label{posdef-2}
(1-\eps) \| \omega \|_{\dot H^{-1}(\M)}^2 \leq \int_{\M} \langle \omega, A \omega \rangle\ d \operatorname{vol}
\leq (1+\eps) \| \omega \|_{\dot H^{-1}(\M)}^2 
\end{equation}
for all $\omega \in C^\infty_c \cap B_2(\M)$, as well as an initial vorticity $\omega_0 \in C^\infty_c \cap B_2(\M)$ such that there is no solution $\omega \in X^{10,2}$, $u \in Y^{10,2}$ with initial data $\omega_0$ on the time interval $[0,1]$.
\end{theorem}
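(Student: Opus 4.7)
The plan is to deduce Theorem \ref{third-blow} from Theorem \ref{second-blow} by unrolling the periodic dimension $\R/\Z$ into a large circle inside $\R^3$, producing an axially symmetric blowup with swirl. The natural $\R/\Z$-symmetry of the $\R^2 \times \R/\Z$ construction is replaced on $\R^3$ by rotational symmetry about a fixed axis, with the linear periodicity being a flat approximation (in the limit $R \to \infty$) of the rotational periodicity around a circle of radius $R$.

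Start with the self-adjoint, positive definite, $100$-reasonable operator $A_{2.5}$ and compactly supported initial vorticity $\omega_0^{(2.5)}$ from Theorem \ref{second-blow}, translated so that the data is supported in $B(0,\delta) \times (\R/\Z)$ for some small $\delta$. For a large parameter $R > \delta^{-1}$, let $\Phi_R\colon B(0,\delta) \times \R/\Z \to \R^3$ be the embedding into a solid torus neighborhood $\mathcal{T}_R$ of the circle $\{r=R,\,z=0\}$, defined in cylindrical coordinates $(r,\theta,z)$ on $\R^3$ by $\Phi_R(y_1, y_2, \tau) \coloneqq (r,\theta,z) = (R+y_1,\,2\pi\tau,\,y_2)$. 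The map $\Phi_R$ is equivariant with respect to translation in $\tau$ on the source and angular rotation on the target; its pullback of the Euclidean metric differs from the flat metric on $\R^2 \times \R/\Z$ by a factor of $(1 + y_1/R)$ in the $\theta$-direction, which is $1 + O(1/R)$ on the support of $\omega_0^{(2.5)}$. Take $\omega_0 \coloneqq (\Phi_R)_* \omega_0^{(2.5)}$, an axially symmetric closed $2$-form supported in $\mathcal{T}_R$.

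Next, design a vector potential operator $A = A_R$ on $\R^3$ that (i) is formally self-adjoint; (ii) is $100$-reasonable; (iii) commutes with the $\operatorname{SO}(2)$-action by rotations about the $z$-axis, so preserves the class of axially symmetric $2$-forms; (iv) satisfies the positive-definiteness bound \eqref{posdef-2}; and (v) restricted to axially symmetric forms supported in $\mathcal{T}_R$, agrees via $\Phi_R$ with $A_{2.5}$ up to an $O(1/R)$ perturbation in the relevant function-space norms. A natural candidate is a convex combination
\[
A_R = (1-\kappa) \,\tilde \eta^{-1} \Delta^{-1} + \kappa\, \tilde A_{2.5,R},
\]
where $\tilde A_{2.5,R}$ is obtained by pushing $A_{2.5}$ forward via $\Phi_R$, symmetrising over rotations, cutting off smoothly outside $\mathcal{T}_R$, and then enforcing self-adjointness by averaging with the formal adjoint. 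The true Euler term provides an unconditional $\dot H^{-1}$ lower bound (accounting for the room provided by $\eps > 0$); the second term supplies the non-self-adjoint dynamics needed inside $\mathcal{T}_R$ once it is restricted to axially symmetric forms and pulled back to $\R^2 \times \R/\Z$. The $100$-reasonableness follows from the corresponding bound for $\tilde\eta^{-1}\Delta^{-1}$ and from the fact that $A_{2.5}$ is already reasonable with kernel estimates that transport cleanly under the smooth embedding $\Phi_R$.

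Finally, since $A_R$ preserves axial symmetry, the generalized Euler equations \eqref{vort-1}, \eqref{vort-2} with initial data $\omega_0$ keep the solution axially symmetric, and the dynamics on the meridional half-plane $\R_+ \times \R$ (with weight $r\, dr\, dz$) is, after pullback by $\Phi_R$, a perturbation of size $O(1/R) + O(\kappa)$ of the two-and-a-half-dimensional equation from Theorem \ref{second-blow}. Exploiting the stability of the Theorem \ref{second-blow} construction (the blowup argument there, like that in Theorem \ref{first-blow}, uses only the conservation of circulation through a shrinking disk together with lower bounds on the inward velocity, both of which are robust under small perturbations of both the data and the operator), we choose $R$ large and $\kappa$ appropriate to conclude that the $\R^3$ solution cannot be continued in $X^{10,2}, Y^{10,2}$ up to time $1$.

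The main obstacle is Step 3, namely exhibiting an operator $A_R$ satisfying \emph{all} of (i)--(v) simultaneously; in particular, ensuring positive-definiteness globally on $\R^3$ (not merely on the axially symmetric subspace) while also reproducing $A_{2.5}$ on that subspace is the delicate point, and is the reason we must work with a convex combination and exploit the $\eps>0$ slack in \eqref{posdef-2}. A secondary technical issue is the stability statement used in Step 4: the Theorem \ref{second-blow} blowup needs to be stable not only under perturbations of the initial data but also under $O(1/R)$ perturbations of the operator $A_{2.5}$ itself, which requires revisiting the quantitative estimates underlying its proof.
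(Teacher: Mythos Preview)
Your overall strategy---embed $\R^2 \times \R/\Z$ into $\R^3$ via (a variant of) cylindrical coordinates, exploit rotational symmetry about the axis to reduce back to the two-and-a-half-dimensional system, and inherit the blowup from Theorem \ref{second-blow}---is exactly the paper's route. But the execution diverges in a way that introduces a genuine gap. You treat the reduction as \emph{perturbative}: the pushed-forward operator $\tilde A_{2.5,R}$ agrees with $A_{2.5}$ only up to $O(1/R)$ metric errors, and you then appeal to stability of the Theorem \ref{second-blow} blowup under small perturbations of the operator. The problem is that the Theorem \ref{second-blow} argument is not a robust barrier argument applied directly to $\omega$; it relies on the exact preservation of the ansatz $\omega = d\tilde\theta \wedge dx^3$ (Proposition \ref{formal}), which in turn depends delicately on the algebraic structure of $A_{2.5}$. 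An $O(1/R)$ perturbation of the operator will generically destroy this ansatz, after which the reduction to the scalar equation of Proposition \ref{sqgb} no longer holds, and the barrier argument cannot be run. You flag this as a ``secondary technical issue'', but it is in fact the central obstruction.

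The paper sidesteps this entirely by arranging the reduction to be \emph{exact} rather than approximate. Two devices make this possible. First, one works in the modified cylindrical coordinate $y = r^2/2$ (and a further rescaling to $(w^1,w^2,w^3)$), so that the volume form---and hence the codifferential $\delta$---is literally the flat one; only the metric $\eta$ acquires $O(1/\sqrt{M})$ errors. Second, rather than your convex combination, the paper takes
\[
\tilde A \;=\; I^* A' I \;+\; \tilde\eta^{-1}\Delta^{-1} \;-\; \varphi\,\tilde\eta^{-1}\Delta^{-1}\,\varphi,
\]
where $A'$ is built from the \emph{flat} Laplacian $\Delta_w$ in $w$-coordinates (not the Euclidean $\Delta$), and $\varphi$ is a cutoff equal to $1$ on the support of the solution. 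On axially symmetric $\omega$ supported where $\varphi \equiv 1$, the last two terms cancel and $\tilde A$ acts exactly as the operator of Theorem \ref{second-blow}; the analogue of Proposition \ref{formal} then goes through verbatim and Proposition \ref{sqgb} gives the contradiction. The metric errors do not enter the dynamics at all---they appear only when verifying the global positive-definiteness bound \eqref{posdef-2}, where a logarithmic cutoff $\varphi$ (small in $\dot H^1$) and the kernel comparison between $\Delta^{-1}$ and $\Delta_w^{-1}$ give an $O(1/\log M)$ error. Your convex-combination ansatz does not achieve this exact cancellation, and your identification of positive-definiteness as the ``main obstacle'' and operator-stability as ``secondary'' inverts the actual difficulty.
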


We will prove Theorem \ref{third-blow} in Section \ref{period-sec}; it will essentially be deduced from Theorem \ref{second-blow} by embedding $\R^2 \times \R/\Z$ into $\R^3$ using (modified) cylindrical coordinates.  The resulting dynamics resembles that of axisymmetric Euler equations with swirl (particularly when viewed in the coordinates used in \cite{benjamin}, \cite{turkington}.  For the true Euler equations, the assumption of axial symmetry does not completely reduce matters to an active scalar equation (in contrast to the situation with an assumption of translation symmetry, as used in the proof of Theorem \ref{second-blow}) due to the non-constant-coefficient nature of the metric $\eta$ in cylindrical coordinates; however, using the freedom to select the vector potential $A$, we can replace $\eta$ locally with a nearby metric which is constant coefficient in cylindrical coordinates on the support of $\omega$, at which point one can adapt the argument used to prove Theorem \ref{second-blow}.  Due to the use of cylindrical coordinates, the solution should now blow up on a circle; see Figure \ref{fig:thirdblow}.

	\begin{figure} [t]
\centering
\includegraphics[width=\textwidth]{./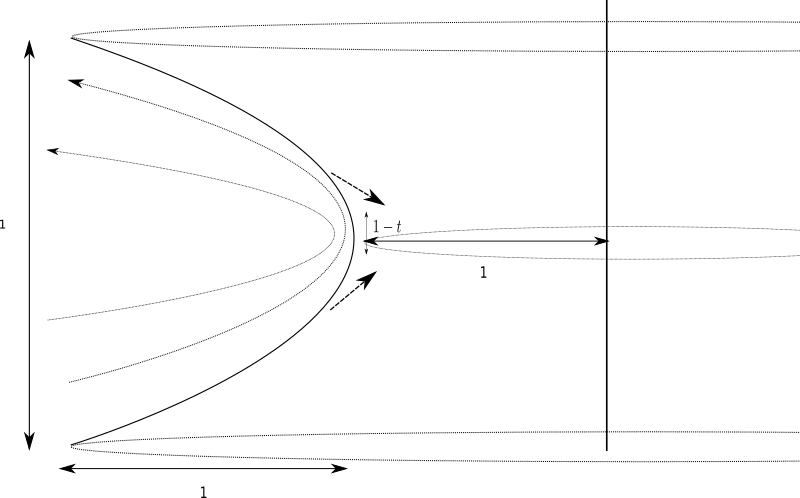}
\caption{A schematic depiction of a  ``axisymmetric with swirl'' blowup of the type expected from the construction in Theorem \ref{third-blow}, which is essentially the same blowup as Theorem \ref{second-blow} if the Cartesian coordinates were replaced with cylindrical ones.  At times $t$ close to the blowup time (assumed here to be $T_*=1$), the vortex streamlines (shown here as dotted curves) are curves coplanar with the vertical axis (drawn here as a thick line) that are pinched into a toroidal region that is roughly a $1-t$-neighbourhood of a circle of radius comparable to $1$.  Within this region, the vorticity increases to about $\frac{1}{1-t}$ as per the Kelvin circulation theorem.  The vorticity and velocity fields are axially symmetric; the latter has magnitude comparable to $1$ in the toroidal region, with significant inward components as well as some ``swirl'' around the vertical axis.  At time $t=1$, the vorticity becomes infinite on a circle, causing blowup.}
\label{fig:thirdblow}
\end{figure}

\begin{remark}  Another potential type of blowup scenario would be a self-similar blowup (here one would need the vector potential operator $A$ to commute with spatial dilations in an appropriate fashion).  In the case of the true Euler equations, the arguments of Chae \cite{chae-ss07, chae-ss10, chae-ss11} preclude non-trivial self-similar solutions in which the vorticity decays rapidly at infinity; however, this leaves open the possibility of non-trivial self-similar solutions that decay only slowly at infinity.  However, we were not able to construct such solutions while keeping $A$ self-adjoint and positive semi-definite.
\end{remark}

\begin{remark} There are numerous issues preventing one from adapting these generalised Euler equation blowup results to the case of the generalised Navier-Stokes systems (in the spirit of \cite{tao}).  The most proximate issue is that all of the results rely in one way or another on the conservation of circulation (Proposition \ref{Fcl}(i)), which is no longer true for Navier-Stokes type equations.  However, even if one could obtain bounds on circulation for Navier-Stokes that were of the same order as what one obtains for Euler, the blowup results in Theorem \ref{second-blow} and Theorem \ref{third-blow} would still create solutions that presumably blow up on a one-dimensional set, which cannot occur for Navier-Stokes equations by the well known results of Caffarelli, Kohn, and Nirenberg \cite{ckn}.   The blowup result in Theorem \ref{first-blow}, which concentrates only at a point, avoids this problem; however, the scaling is still (barely) unfavorable due to the assumption of bounded circulation (which turns out to be a stronger condition, from the perspective of scaling analysis, than conservation of energy).  The numerology is as follows.  At a time $t$ close to the blowup time $T_*$, one expects the vortex lines to pinch in a disk of radius comparable to $\sqrt{T_*-t}$ (see Figure \ref{fig:firstblow}).  As this disk has area comparable to $T_*-t$, conservation (or at least boundedness) of circulation suggests that the vorticity $\omega$ is of size about $\frac{1}{T_*-t}$ on this disk, which corresponds on the level of scaling to a velocity comparable to $\frac{1}{\sqrt{T_*-t}}$.  Thus far the numerology is self-consistent, but in the case of Navier-Stokes, a viscosity term $\Delta u$ in \eqref{euler-eq} would now be expected to be comparable to $\frac{1}{(T_*-t)^{3/2}}$, which is also the order of the transport term $(u \cdot \nabla) u$.  Thus we expect the viscosity effects to be comparable to the nonlinear effects, creating a ``critical'' scenario (analogous to two-dimensional Navier-Stokes) which leads one to predict that the blowup scenario will not occur.  It may be possible to still obtain blowup by weakening the viscosity term to something like $\Delta^\alpha u$ for some $0 < \alpha < 1$, but with the full viscosity term $\Delta u$, it appears that this blowup scenario can only occur if either the viscosity somehow causes a significant increase in circulation, or if there is a lot of ``looping'' of the vortex lines that causes the circulation through a small disk to become very large due to the vortex lines passing through the disk multiple times.
\end{remark}

\begin{remark} The blowup mechanisms in this paper behave a little differently from the locally (approximately) discretely self-similar solutions proposed in \cite{brenner} (which is in turn modeled on the dynamics seen in \cite{tao}).  We have already discussed the numerology of the blowup in Theorem \ref{first-blow} in the previous remark; now we discuss the numerology in Theorem \ref{second-blow} (the situation for Theorem \ref{third-blow} is similar).  At time $t$ close to the final blowup time $T_*$, one expects a particularly strong amount of vorticity (with $\omega$ comparable to $\frac{1}{T_*-t}$) in a narrow tube of radius about $T_*-t$ and sidelength $1$ (the tube will be a neighbourhood of a copy of $\R/\Z$ in $\R^2 \times \R/\Z$); see Figure \ref{fig:secondblow}.  This vortex tube will only capture a small fraction (about $T_*-t$) of the original circulation; the remainder will come from a ``wake'' of larger vortex tubes trailing this narrow tube.  The velocity field $u$ will be comparable to $1$ throughout, and will pinch the narrowest vortex tube to a line (or more precisely, a copy of $\R/\Z$) by time $T_*$.  This scenario has some resemblance to that in \cite{brenner}, which also involves increasingly narrow vortex tubes that carry less and less circulation, but which have increasingly large pointwise vorticity; however, the tubes in \cite{brenner} are not completely linear but have some curvature (and their length goes to zero as $t$ approaches $T_*$); furthermore there is a complicated dynamic in \cite{brenner} in which pairs of vortex tubes attract and deform to become vortex planes, which then destabilise back into thinner vortex tubes, which is probably not present in the blowup constructed in Theorem \ref{second-blow} (or Theorem \ref{third-blow}).  Also, the scaling exponents in \cite{brenner} are more flexible than the ones here, for instance the width of the vortex tubes in \cite{brenner} is not constrained to decay linearly in $T_*-t$, nor is the vorticity constrained to behave inversely like $\frac{1}{T_*-t}$.  We do not know how to implement the blowup scenario proposed in \cite{brenner} using a generalised Euler equation, even if one drops the requirements of self-similarity and positive definiteness for the vector potential operator.
\end{remark}

\begin{remark} The blowup scenarios here are not of the ``tube collapse'' form ruled out in \cite{cf}, in which the volume of a vortex tube locally collapses to zero; instead, only a small portion of the volume is either pinched in a neck, or creased on a line or circle.  As the vorticity is expected to blow up like $1/(T_*-t)$, the Beale-Kato-Majda blowup criterion is satisfied (as it must be).  The blowup scenarios also do not appear to be compatible with the scenarios ruled out in \cite{cfm}, either because the velocity is unbounded or because the vorticity direction is changing too rapidly.  One could presumably use these blowup scenarios as test cases for any future blowup criterion results for the true or generalised Euler equations in a similar fashion.
\end{remark}

\begin{remark}  It is also tempting to construct blowup solutions by first choosing the fields $u,\omega$ blowing up in a specified fashion (with $\omega$ being transported by $u$) and then designing a vector potential operator $A$ to solve the generalised Euler equations with these choices of fields, in the spirit of \cite{tao-nlw}, \cite{tao-high}.  This seems achievable if one does not require $A$ to be self-adjoint.  If however self-adjointness is imposed, then this creates a nonlocal-in-time constraint on the fields $u,\omega$ which makes this approach difficult.  Namely, if one writes $\omega = dv$ for some $v \in \Lambda_1(\M)$, then an integration by parts using the self-adjointness of $A$ reveals that we must have the constraint
$$ \int_\M \langle v(t), u(t')\rangle\ d\operatorname{vol} = \int_\M \langle v(t'), u(t)\rangle\ d\operatorname{vol} $$
relating the velocity $u$ to the covelocity $v$ for all times $t,t'$.  We do not know how to design suitably blowing up fields $u,\omega$ obeying this constraint as well as \eqref{vort-1} other than by solving the generalised Euler equations.
\end{remark}

\begin{remark}  We have attempted to design the generalised Euler equations so as to capture as many of the known features of the true Euler equations as possible.  However, we should mention\footnote{We thank an anonymous referee for stressing this point.} two properties of the Euler equations which are not obeyed by the generalised Euler equations, namely translation invariance and rotation invariance; the operators $A$ we construct are inhomogeneous and non-isotropic (indeed, if one were to insist on these properties as well as dilation invariance, one would essentially be restricting the class of generalised Euler equations back to the true Euler equations up to some normalising constants, as the Biot-Savart law is basically determined by these symmetries).  In particular, we do not have conservation of momentum (impulse) or angular momentum (moment of impulse) for these equations.  On the other hand, the \emph{class} of generalised Euler equations remains invariant with respect to such symmetries, in particular most of the useful estimates on solutions to the Euler or generalised Euler equations involve function space norms which are invariant with respect to translations or rotations.  Also, the conservation laws of impulse and moment of impulse are very rarely used in the local or global regularity theory for the Euler equations, so their loss does not significantly reduce the body of results that should transfer over to the generalised Euler equation setting.
\end{remark}

\subsection{Acknowledgments}

The author is supported by NSF grant DMS-1266164 and by a Simons Investigator Award.  The author thanks Nets Katz for many useful conversations and encouragement, and Peter Constantin and the anonymous referees for many helpful suggestions and comments.

\section{Formal proof of conservation laws}\label{fcl-sec}

We now prove Proposition \ref{Fcl}.  In this section all calculations will be formal, in that we do not check that all fields involved are smooth enough and exhibit sufficient decay at infinity to justify invocations of identities such as Stokes' theorem; we also assume here that all closed forms are exact.  

Let $\omega,u$ solve the generalised Euler equations with some vector potential operator $A$. For future reference we observe from the divergence-free nature of the velocity field $u$ (or equivalently, that ${\mathcal L}_u d\operatorname{vol} = 0$) we (formally) have\footnote{See e.g. \cite[\S 3.4]{aubin} for a definition of the Lie derivative on $k$-vector fields, as well as a proof of the Leibniz rule \eqref{lieb-prod}.}
\begin{equation}\label{stok}
\int_\M {\mathcal L}_u f\ d\operatorname{vol} = 0 
\end{equation}
for any $f \in \Lambda_0(\M)$.  Applying this with $f = \langle \omega,\alpha \rangle$ for any  $\omega \in \Lambda_k(\M)$ and $\alpha \in \Gamma^k(\M)$ using the Leibniz rule
\begin{equation}\label{lieb-prod}
{\mathcal L}_u \langle \omega, \alpha \rangle = \langle {\mathcal L}_u \omega, \alpha \rangle + \langle \omega, {\mathcal L}_u \alpha \rangle,
\end{equation}
we (formally) conclude the integration by parts formula
\begin{equation}\label{integ}
\int_\M \langle {\mathcal L}_u \omega, \alpha \rangle\ d\operatorname{vol}
= - \int_\M \langle  \omega, {\mathcal L}_u \alpha \rangle\ d\operatorname{vol}.
\end{equation}

The proof of the Kelvin circulation theorem (i) is standard. For each time $t$, let $\Phi(t)\colon \M \to \M$ be the diffeomorphism formed by flowing along the vector field $u$, thus $\Phi(0)$ is the identity and
$$ \partial_t \Phi(t,x) = u(t, \Phi(t,x) )$$
for all $t\in \R$ and $x \in \M$.  Using the interpretation of a Lie derivative as an infinitesimal diffeomorphism, we have
\begin{equation}\label{phia}
\partial_t (\Phi(t)^* \alpha) = \Phi(t)^* (\partial_t \alpha(t)) + \Phi^*(t)( {\mathcal L}_{u(t)} {\alpha(t)} )
\end{equation}
for any time-dependent form or vector field $\alpha$, where $\Phi(t)^*$ denotes the pullback by $\Phi(t)$.  From \eqref{vort-1} we thus see that $\Phi(t)^* \omega(t)$ is conserved in time, thus giving the \emph{Cauchy vorticity formula}
\begin{equation}\label{cauchy}
 \omega(t) = \Phi(t)_* \omega(0)
\end{equation}
where $\Phi(t)_*$ is the pushforward by $\Phi(t)$ (the inverse of $\Phi(t)^*$).  The Kelvin circulation theorem (i) then follows from the change of variables formula.

From \eqref{vort-2} and \eqref{deldel}, $u$ is divergence-free, thus by \eqref{cartan} 
\begin{equation}\label{lv}
{\mathcal L}_u d\operatorname{vol} = 0
\end{equation}
and thus by \eqref{phia} $\Phi(t)^* d\operatorname{vol}$ is conserved in time, thus $\Phi(t)$ is volume-preserving:
$$ \Phi(t)^* d\operatorname{vol} = d\operatorname{vol}.$$
Since the Hodge duality operator $*$ from Remark \ref{hodge-rem} is defined using the volume form $\operatorname{vol}$, we conclude that $*$ commutes with $\Phi(t)^*$.  In particular, we see from the Cauchy vorticity formula \eqref{cauchy} that
$$ *\omega(t) = \Phi(t)_* (*\omega(0)).$$
In three dimensions, this gives the transport (ii) of the vortex stream lines.

Now we establish (iii).  Let $v(t)$ be a time-dependent $1$-form with $dv = \omega$, then from the product rule and differentiation under the integral sign we have
$$ \partial_t H(t) = \int_{\R^3} \partial_t v \wedge \omega + v \wedge \partial_t \omega.$$
Writing $\omega = dv$ and using the Leibniz rule for the exterior derivative, we have
$$ d( v \wedge \partial_t v ) = \omega \wedge \partial_t v - v \wedge \partial_t \omega.$$
As the wedge product is commutative between $1$-forms and $2$-forms, we have $\omega \wedge \partial_t v = \partial_t v \wedge \omega$.  We conclude upon integrating and using Stokes' theorem that
$$ \partial_t H(t) = 2 \int_{\R^3} v \wedge \partial_t \omega$$
and hence by \eqref{vort-1}
\begin{equation}\label{ot}
 \partial_t H(t) = -2 \int_{\R^3} v \wedge {\mathcal L}_u dv.
\end{equation}
Recall (see e.g. \cite[\S 2.25, 3.4]{aubin}) that the Lie derivative ${\mathcal L}_u$ and the exterior derivative $d$ obey the Leibniz rules
\begin{equation}\label{leib1}
{\mathcal L}_u ( \omega \wedge \lambda ) = ({\mathcal L}_u \omega ) \wedge \lambda + \omega \wedge ({\mathcal L}_u \lambda)
\end{equation}
and
\begin{equation}\label{leib2}
d( \omega \wedge \lambda) = (d\omega) \wedge \lambda + (-1)^k \omega \wedge (d\lambda)
\end{equation}
for all $\omega \in \Lambda_k(\M)$ and $\lambda \in \Lambda_l(\M)$.  From these Leibniz rules and \eqref{dcom} we have
$$ {\mathcal L}_u (v \wedge dv) + d( {\mathcal L}_u v \wedge v ) = 2 v \wedge {\mathcal L}_u dv $$
and the claim (iii) now follows from \eqref{ot}, \eqref{stok}, and Stokes' theorem.

Now we prove (iv).  From the Leibniz rule and the self-adjointness of $A$, we have
$$ \partial_t E(t) = \int_{\M} \langle \partial_t \omega, A \omega \rangle\ d\operatorname{vol}.$$
Using \eqref{vort-1}, \eqref{lv} and \eqref{integ}, we conclude
$$ \partial_t E(t) = \int_{\M} \langle \omega, {\mathcal L}_u A \omega \rangle\ d\operatorname{vol}.$$
As $\omega$ is closed, and we are working formally, we may write $\omega = dv$ for some $1$-form $v$.  By duality, we thus have
$$ \partial_t E(t) = \int_{\M} \langle v, \delta {\mathcal L}_u A \omega \rangle\ d\operatorname{vol}.$$
Since ${\mathcal L}_u$ annihilates $d\operatorname{vol}$, it commutes with $*$; by \eqref{dcom} and \eqref{dim} it therefore commutes with $\delta$, thus by \eqref{vort-2}
$$ \delta {\mathcal L}_u A \omega  = {\mathcal L}_u u = [u,u] = 0$$
(where $[X,Y]$ denotes the Lie bracket of two vector fields $X,Y$) and the claim follows.

Finally, we establish (v).  From \eqref{vort-1}, we have
$$ \partial_t \int_{\M} \langle \omega, \alpha \rangle\ d\operatorname{vol} = -\int_{\M} \langle {\mathcal L}_u \omega, \alpha \rangle\ d\operatorname{vol}.$$
By \eqref{lv} and \eqref{integ}, the right-hand side is equal to
$$ \int_{\M} \langle  \omega, {\mathcal L}_u \alpha \rangle\ d\operatorname{vol}.$$
Writing $\omega = dv$ as before, and using \eqref{integ}, \eqref{dcom} and $d\alpha = X$, we can write this as
$$ -\int_{\M} \langle v, {\mathcal L}_u X \rangle\ d\operatorname{vol}.$$
Since 
$$ {\mathcal L}_u X = [u,X] = -[X,u] = -{\mathcal L}_X u$$
we can use \eqref{vort-2} to write the previous expression as
$$ \int_{\M} \langle v, {\mathcal L}_X \delta A \omega \rangle\ d\operatorname{vol}.$$
As $X$ is divergence-free, ${\mathcal L}_X$ commutes with $*$ and thus with $\delta$.  By duality and $dv = \omega$, the above expression becomes
$$ \int_{\M} \langle \omega, {\mathcal L}_X A \omega \rangle\ d\operatorname{vol}.$$
Using \eqref{integ} and using the self-adjointness of $A$, this is equal to
$$ -\int_{\M} \langle A {\mathcal L}_X \omega, \omega \rangle\ d\operatorname{vol}.$$
But as $A$ and ${\mathcal L}_X$ commute, the previous two expressions are also negations of each other, and must thus be zero.  The claim follows.

\section{Formal Lagrangian formulation}\label{lag-sec}

As in the preceding section, our calculations here will be purely formal, without regard to issues of smoothness or decay.

Given a divergence-free velocity field $u\colon [0,T] \to B^1(\M)$, we can form the family of volume-preserving diffeomorphisms $\Phi(t)\colon \M \to \M$ for $t \in [0,T]$ by solving the ODE
$$ \partial_t \Phi(t,x) = u(t, \Phi(t,x))$$
with initial data $\Phi(0,x) = x$.  We can then deform this family to a two-parameter family $\Phi(s,t)\colon \M \to \M$ of volume-preserving diffeomorphisms with $s$ near zero by solving a further ODE
\begin{equation}\label{sphi}
 \partial_s \Phi(s,t,x) = v( t, \Phi(s,t,x) )
\end{equation}
for some additional divergence-free velocity field $v\colon [0,T] \to B^1(\M)$, with initial data $\Phi(0,t,x) = \Phi(t,x)$.  The velocity field $u$ then deforms in $s$ via the formula
\begin{equation}\label{dphi}
 \partial_t \Phi(t,x) = u(s, t, \Phi(t,x)).
\end{equation}
Differentiating \eqref{sphi} in $t$ and \eqref{dphi} in $s$ and comparing at $s=0$ gives the identity
$$ \partial_s u + (v \cdot \nabla) u = \partial_t v + (u \cdot \nabla) v; $$
since 
$$ {\mathcal L}_u v = [u,v] = (u \cdot \nabla) v - (v \cdot \nabla) u$$
we thus have\footnote{Another way of interpreting the identity \eqref{las} is as follows.  The diffeomorphisms $\Phi$ can be viewed as a trivialisation of the $\M$-bundle over the parameter space $\R^2$ of the variables $(s,t)$.  The flat connection associated to this trivialisation, when written in terms of the standard trivialisation, correspond to the differential operators $\partial_t + {\mathcal L}_u$ and $\partial_s + {\mathcal L}_v$.  As the connection is flat, these operators commute, giving \eqref{las}.}
\begin{equation}\label{las}
 \partial_s u = \partial_t v + {\mathcal L}_u v.
\end{equation}
Let $A\colon B_2(\M) \to \Gamma^2(\M)$ be formally self-adjoint.
We now introduce the formal Lagrangian
\begin{equation}\label{lphi}
 {\mathcal L}[\Phi] \coloneqq \frac{1}{2} \int_0^T \int_{\M} \langle \omega, A \omega \rangle\ d\operatorname{vol} dt,
\end{equation}
where $\omega \in B_2(\M)$ is closed and solves \eqref{vort-2}; we assume that $\delta A$ is invertible, so that $\omega$ is uniquely determined by $u$.  We claim that if $u$ also solves \eqref{vort-1}, then it is a (formal) critical point of the Lagrangian if one holds the endpoints $\Phi(0), \Phi(T)$ fixed; in other words, if one deforms $u$ and $\Phi$ as above  using a divergence-free field $v$ that vanishes at the endpoints $t=0, T$, then
$$ \partial_s {\mathcal L}[\Phi] = 0 $$
at $s=0$.  Indeed, from \eqref{lphi} and the self-adjointness of $A$, the left-hand side is
$$ \int_0^T \int_{\M} \langle \omega, \partial_s A \omega \rangle\ d\operatorname{vol} dt.$$
As $\omega$ is closed, we can write $\omega = d \alpha$ for some $1$-form $\alpha$.  Integrating by parts and using \eqref{vort-2}, this quantity can be rewritten as
$$ \int_0^T \int_{\M} \langle \alpha, \partial_s u \rangle\ d\operatorname{vol} dt$$
which by \eqref{las} and \eqref{integ} is equal to
$$ \int_0^T \int_{\M} \langle - \partial_t \alpha - {\mathcal L}_u \alpha, v \rangle\ d\operatorname{vol} dt.$$
As $v$ is divergence-free, we can (formally) write $v = \delta \beta$ for some $2$-vector field $\beta$.  Integrating by parts using $d \alpha = \omega$, we can rewrite the preceding expression as
$$ \int_0^T \int_{\M} \langle - \partial_t \omega - {\mathcal L}_u \omega, \beta \rangle\ d\operatorname{vol} dt.$$
But this vanishes by \eqref{vort-1}.

\begin{remark}  Suppose the operator $A$ is (formally) positive definite.  Then one can interpret the above calculation as asserting that the generalised Euler equations in Definition \ref{gee} are the formal geodesic flow on the infinite-dimensional manifold $\operatorname{Sdiff}(\M)$ of volume-preserving diffeomorphisms on $\M$, where we endow this manifold with a right-invariant metric whose quadratic form on the tangent space of the identity (which one can identify with divergence-free vector fields $u$) is given by $u \mapsto \int_{\M} \langle \omega, A \omega \rangle d\operatorname{vol}$, where $\omega$ obeys \eqref{vort-2}.  In the case of the true Euler equations, this fact was famously observed by Arnold \cite{arnold}, as a special case of the Euler-Poincar\'e formalism, and a rigorous version of these computations was used in \cite{ebin-marsden} to obtain a local existence theorem for the true Euler equations that is close to that in Theorem \ref{lest}.  See \cite{wash} for some analogous results for the SQG equation.  If one drops the requirement that the diffeomorphisms be volume preserving, then there are several \emph{compressible} fluid equations that also have a rigorous geodesic flow interpretation; see e.g. \cite{ck}, \cite{khesin}, \cite{lenells}, \cite{escher}, \cite{wunsch}.  It is thus likely that the formal computations in this section can similarly be made rigorous given suitable hypotheses on the vector potential operator $A$ and on the initial data, but we will not attempt to do so here.
\end{remark}

Given the above Lagrangian formulation of the generalised Euler equations in Definition \ref{gee}, it should come as no surprise that the conservation laws in Proposition \ref{Fcl} are associated to symmetries of the Lagrangian \eqref{lphi}, in accordance with Noether's theorem.  Indeed, the Kelvin circulation theorem (and hence helicity conservation and stream line conservation) come from the invariance of \eqref{lphi} with respect to the right action of $\operatorname{Sdiff}(\M)$, while conservation of the Hamiltonian comes (as usual) from time translation symmetry, and conservation of impulse comes from the symmetry arising from the diffeomorphisms $e^{sX}$ generated by the vector field $X$.  See \cite{tao-blog}, \cite{shankar} for further discussion.

\section{Classical local existence}\label{lest-sec}

In this section we leave the realm of formal calculations, and prove Theorem \ref{lest} rigorously.  We will rely primarily on the energy method, with some modification at low frequencies to deal with the failure of the generalised Biot-Savart operator $\delta A$ to map $H^s$ to $L^\infty$ in the $m=2$ case.   To abbreviate the notation, we write $\| \|_{\dot H^s}$ for $\| \|_{\dot H^s(\M)}$, and similarly for $\| \|_{L^p}$, $\|\|_{H^s}$, etc..  It will also be convenient to use the norm
$$
\| f \|_{L^p \cap H^s} \coloneqq \| f\|_{L^p} + \| f \|_{H^s}.
$$

Let $\M, d, A, M, p$ be as in Theorem \ref{lest}.  For brevity, we drop the dependence of constants on $d,M,A,p$ from the asymptotic notation.

We can of course write the system \eqref{vort-1}, \eqref{vort-2} as a single equation
\begin{equation}\label{vort-comb}
 \partial_t \omega + {\mathcal L}_{\delta A \omega} \omega = 0.
\end{equation}

In coordinates, the equation \eqref{vort-comb} becomes
\begin{equation}\label{dap}
 \partial_t \omega + (\delta A \omega \cdot \nabla) \omega = {\mathcal O}( (\nabla^2 A \omega) \omega )
\end{equation}
where we use ${\mathcal O}(X)$ to denote an expression that has the \emph{schematic form} of $X$ in the sense that it is a linear combination (with constant coefficients) of components of a tensor of the form $X$ (interpreting all products in $X$ as tensor products).  

We first establish uniqueness.  Given two solutions $\omega_1, \omega_2 \in X^{s,p}$ to \eqref{dap} with initial data $\omega_0$, the difference $\alpha \coloneqq \omega_1 - \omega_2$ lies in $X^{s,p}$ and obeys an equation of the form
\begin{equation}\label{diff}
 \partial_t \alpha + (\delta A \omega_2 \cdot \nabla) \alpha + (\delta A \alpha \cdot \nabla) \omega_1 
 = {\mathcal O}( (\nabla^2 A \alpha) \omega_1 ) + {\mathcal O}( (\nabla^2 A \omega_2) \alpha ).
\end{equation}
Taking inner products with $\alpha |\alpha|^{p-2}$ and integrating using the divergence-free nature of $\delta A \omega_2$, we obtain\footnote{To be more rigorous here, one could obtain instead a transport equation for $(\eps^2 + |\alpha|^2)^{p/2}$ for $\eps>0$, run the Gronwall argument below for the quantity $\| (\eps^2 + |\alpha|^2)^{1/2} \|_{L^p}$, and then send $\eps$ to zero; we leave the details to the interested reader.  Similarly for other arguments in this section involving derivatives of $L^p$ or $L^2$ norms.} the inequality
$$
\partial_t \| \alpha \|_{L^p} \lesssim \| (\delta A \alpha \cdot \nabla) \omega_1 \|_{L^p} +
\| (\nabla^2 A \alpha) \omega_1 \|_{L^p} + \| (\nabla^2 A \omega_2) \alpha \|_{L^p},$$
and hence by H\"older's inequality one has
\begin{align*}
\partial_t \| \alpha \|_{L^p} &\lesssim \| P_{\operatorname{lo}} \nabla A \alpha \|_{L^q} \| \nabla \omega_1 \|_{L^m} +
\| P_{\operatorname{hi}} \nabla A \alpha \|_{L^r} \| \nabla \omega_1 \|_{L^d} \\
&\quad + \| \nabla^2 A \alpha \|_{L^p} \| \omega_1 \|_{L^\infty} + \| \nabla^2 A \omega_2 \|_{L^\infty} \| \alpha \|_{L^p}
\end{align*}
where $\frac{1}{q} \coloneqq \frac{1}{p} - \frac{1}{m}$, $\frac{1}{r} \coloneqq \frac{1}{p} -\frac{1}{d}$, and $P_{\operatorname{lo}}, P_{\operatorname{hi}}$ are the Fourier projections to frequencies $|\xi| \leq 1$ and $|\xi| > 1$ respectively.
From Sobolev embedding in $\R^m \times (\R/\Z)^{d-m}$, \eqref{pass}, and the hypothesis $s > \frac{d}{2} + 1$, one has
\begin{align*}
 \| P_{\operatorname{lo}} \nabla A \alpha \|_{L^q},  \| P_{\operatorname{hi}} \nabla A \alpha \|_{L^r} &\lesssim \| \nabla^2 A \alpha \|_{L^p} \\
 \| \nabla \omega_1 \|_{L^m}, \| \nabla \omega_1 \|_{L^d}, \| \omega_1 \|_{L^\infty} &\lesssim \| \omega_1 \|_{H^s} \lesssim \| \omega_1 \|_{X^{s,p}} \\
\| \nabla^2 A \omega_2 \|_{L^\infty} &\lesssim \| \nabla^2 A \omega_2 \|_{H^s} \lesssim \| \omega_2 \|_{H^s} \lesssim \| \omega_2 \|_{X^{s,p}}.
\end{align*}
Also, from Definition \ref{reason-def}, the operator $\nabla^2 A$ is bounded on $L^2$ and has a kernel obeying Calder\'on-Zygmund estimates, so is bounded on $L^p$ by Calder\'on-Zygmund theory (see e.g. \cite{stein:large}).  We conclude that
$$  \partial_t \| \alpha \|_{L^p} \lesssim (\|\omega_1\|_{X^{s,p}} + \|\omega_2\|_{X^{s,p}}) \| \alpha \|_{L^p}.$$
Since $\alpha(0)=0$, we conclude from Gronwall's inequality that $\alpha=0$ identically, giving uniqueness.

Next, we show existence of (weak) solutions using a standard viscosity method which we briefly sketch here; later on we will upgrade the regularity of solutions from weak to strong.  For any $\eps>0$ we can consider the generalised Navier-Stokes equation
\begin{equation}\label{oma}
\partial_t \omega + {\mathcal L}_{\delta A \omega} \omega + \eps \Delta \omega = 0
\end{equation}
(recall in this paper that $\Delta$ denotes the Hodge Laplacian, which is positive semi-definite).  We can write this equation schematically as
\begin{equation}\label{stomp}
\partial_t \omega + \eps \Delta \omega = {\mathcal O}( \nabla A \omega \nabla \omega ) + {\mathcal O}( (\nabla^2 A \omega) \omega ).
\end{equation}
From repeated application of the H\"older and Sobolev inequalities, as well as \eqref{pass}, the $L^p$ boundedness of $\nabla^2 A$, and the hypothesis $s > \frac{d}{2}+1$, one can check that if $\omega_1, \omega_2 \in L^p \cap H^{s+1} \cap B_2(\M)$, then the expression ${\mathcal O}( \nabla A \omega_1 \nabla \omega_2 ) + {\mathcal O}( (\nabla^2 A \omega_1) \omega_2 )$ lies in $L^p \cap H^s(\M)$, and that this operation is locally Lipschitz\footnote{Indeed, $\nabla A \omega_1, \nabla^2 A \omega_1, \nabla \omega_2, \omega_2$ both lie in $H^s(\M)$ thanks to \eqref{pass}, and the product of two functions in $H^s(\M)$ lies in both $H^s(\M)$ and $L^p(\M)$ by the Leibniz rule and the H\"older and Sobolev inequalities.} in the $\omega_1$ and $\omega_2$ variables in the indicated norms.  By running a contraction mapping argument that places $\omega$ in the function space
$$ C^0( [0,T] \to L^p \cap H^{s} \cap B_2(\M) ) \cap L^2( [0,T] \to H^{s+1} \cap B_2(\M) ),$$
and using the parabolic smoothing effects of the heat equation (and also noting that the Lie derivative operator ${\mathcal L}_{\delta A \omega}$ and the Hodge Laplacian $\Delta$ both preserve the space $B_2(\M)$ of closed $2$-forms), one can then construct local solutions in \eqref{oma} in the above space that can be continued as long as one has a uniform bound on the quantity
\begin{equation}\label{ps}
\| \omega(t) \|_{L^p \cap H^s}.
\end{equation} 
But for any constant coefficient differential operator $D$ of order $k$ for some $0 \leq k \leq s$ (with no lower order terms), we have upon differentiating \eqref{oma} by $D$ that
$$
\partial_t D \omega + (\delta A \omega \cdot \nabla) D \omega + \eps \Delta D \omega = F$$
where
$$ F \coloneqq [\delta A \omega, D] \cdot \nabla \omega + {\mathcal O}( D( (\nabla^2 A \omega) \omega ) )$$
and $[A,B] \coloneqq AB - BA$.
Multiplying by $D\omega$ and integrating by parts, we obtain an energy inequality of the form
\begin{equation}\label{dow}
 \partial_t \| D\omega \|_{L^2}^2 \lesssim \| F \|_{L^2} \|D \omega \|_{L^2}
\end{equation}
where the implied constant does not depend on $\eps$.  However, using the Moser estimate
\begin{equation}\label{tools}
 \| D(uv) \|_{L^2} \lesssim \| u \|_{\dot H^k} \|v\|_{L^\infty} + \|u \|_{L^\infty} \|v\|_{\dot H^k} 
\end{equation}
(see e.g. \cite[(2.0.22)]{tools}), as well as the commutator estimate\footnote{The commutator estimate would usually have $\| u \|_{\dot H^k} \| \nabla v \|_{L^\infty}$ in place of $\|u\|_{\dot H^{k+1}} \| v\|_{L^\infty}$ here, but it is not difficult to adapt the standard (paraproduct-based) proof of the estimate to also establish \eqref{commutator} as written.  Indeed, one can use the Leibniz rule to write $[u,D] \nabla v$ as $\sum_{i=1}^k {\mathcal O}(\nabla^i u \nabla^{k+1-i} v )$; the contribution of the cases $i=1$, $i=k+1$ are trivial, and all intermediate cases can be handled by paraproducts or Littlewood-Paley decomposition.}
\begin{equation}\label{commutator}
\| [u,D] \nabla v \|_{L^2} \lesssim \|u\|_{\dot H^{k+1}} \| v\|_{L^\infty} + \|\nabla u \|_{L^\infty} \|v\|_{\dot H^{k}}
\end{equation}
(see e.g. \cite{km}, \cite{kponce}, noting the claim is trivial for $k=0$), we see that
\begin{equation}\label{x2}
\|F\|_{L^2} \lesssim  \| \nabla^2 A \omega \|_{L^\infty} \| \nabla^k \omega \|_{L^2} + \| \nabla^{k+2} A \omega \|_{L^2} \| \omega \|_{L^\infty}. 
\end{equation}
Applying Sobolev embedding then gives
$$ \|F\|_{L^2} \lesssim \| \nabla^2 A \omega \|_{H^s} \|\omega \|_{H^s}.$$
Hence by \eqref{pass} and summing over a suitable choice of $D$ we have
\begin{equation}\label{taba}
 \partial_t \| \omega \|_{H^s}^2 \lesssim \| \omega \|_{H^s}^3.
\end{equation}
Since $\|\omega(0)\|_{H^s} \lesssim M$, this gives an \emph{a priori} bound 
\begin{equation}\label{omam}
 \| \omega(t) \|_{H^s} \lesssim M
\end{equation}
for $0 \leq t \leq T$, if $T$ is sufficiently small depending on the bound $M$.  

Now we need to control the $L^p$ component in \eqref{ps}.  Since $\nabla^2 A$ is bounded in $L^p$, $\nabla^2 A \omega$ has a $L^p$ norm of $O( \| \omega \|_{L^p})$.  In particular, from Sobolev embedding and H\"older we have
\begin{equation}\label{sm}
\begin{split}
\| {\mathcal O}( \nabla A \omega \nabla \omega ) + {\mathcal O}( (\nabla^2 A \omega) \omega ) \|_{L^p} 
&\lesssim \| \nabla A \omega \|_{L^q} \| \nabla \omega \|_{L^2} + \| \nabla^2 A \omega \|_{L^\infty} \| \omega \|_{L^p} \\
&\lesssim \| \nabla^2 A \omega \|_{L^p \cap H^s} \| \omega \|_{H^s} + \| \nabla^2 A \omega \|_{H^s} \| \omega \|_{L^p} \\
&\lesssim \| \omega \|_{L^p \cap H^s} \| \omega \|_{H^s}
\end{split}
\end{equation}
where $1/q \coloneqq 1/p - 1/2$, and hence by \eqref{stomp}, \eqref{omam}, and the contractivity of the heat semigroup in $L^p$
$$ \partial_t \| \omega \|_{L^p} \lesssim_M 1 + \| \omega \|_{L^p} $$
and hence by Gronwall's inequality we have
\begin{equation}\label{samo}
\sup_{0 \leq t \leq T} \| \omega(t) \|_{L^p} \lesssim_{M,T} 1
\end{equation}
giving the required uniform bound on \eqref{ps}.  This \emph{a priori} bound allows us to continue the solution to \eqref{oma} up to a time $T>0$ that is independent of $\eps$. A standard compactness argument sending $\eps \to 0$ (and noting from the Rellich compactness theorem that weak convergence in $L^p \cap H^s$ implies strong convergence in $C^1$) then gives a (distributional) solution to the inviscid system \eqref{vort-1}, \eqref{vort-2} with the regularity
\begin{equation}\label{linfty}
 \omega \in L^\infty([0,T] \to L^p \cap H^s \cap B_2(\M) ).
\end{equation}
This is not quite smooth enough to place $\omega$ in $X^{s,p}$ (mainly because of the lack of continuity in time); we will upgrade the regularity of $\omega$ shortly.

To prove continuous dependence on the initial data, we use an argument originally due to Bona and Smith \cite{bona} (see also the survey of Tzvetkov \cite{tzvetkov}).  Let $\omega_0 \in L^p \cap H^s \cap B_2(\M)$ with 
$$ \| \omega_0 \|_{L^p \cap H^s} < M.$$
Let $\omega'_0 \in L^p \cap H^{s+1} \cap B_2(\M)$ be a suitable mollification of $\omega_0$ which also obeys the bound
$$ \| \omega'_0 \|_{L^p \cap H^s} < M;$$
we will choose $\omega'_0$ more precisely later.  
Let $\omega \in L^\infty( [0,T] \to L^p \cap H^s \cap B_2(\M) )$ be a solution to \eqref{vort-comb} with initial data $\omega_0$ constructed by the preceding compactness argument, and similarly define $\omega' \in L^\infty( [0,T] \to L^p \cap H^s \cap B_2(\M))$.
From \eqref{omam}, \eqref{samo} we have the bounds to be the solution to \eqref{vort-comb} with initial data $\omega'_0$.  Then we have
\begin{equation}\label{oso}
\| \omega(t) \|_{L^p \cap H^s}, \| \omega'(t) \|_{L^p \cap H^s} \lesssim_{M,T} 1
\end{equation}
for all $0 \leq t \leq T$.  A routine modification of the proof of \eqref{taba} yields the \emph{a priori} bound
$$ \partial_t \| \omega' \|_{H^{s+1}}^2 \lesssim \| \omega \|_{H^s} \| \omega' \|_{H^{s+1}}^2$$
which by Gronwall's inequality and \eqref{oso} leads to the bound
\begin{equation}\label{omb}
 \| \omega'(t) \|_{H^{s+1}} \lesssim_{M,T} \| \omega'_0 \|_{H^{s+1}}
\end{equation}
for all $t \in [0,T]$.

Next, we set $\alpha \coloneqq \omega' - \omega$.  As in \eqref{diff}, we have the difference equation
\begin{equation}\label{Diffa}
\partial_t \alpha + (\delta A \omega \cdot \nabla) \alpha + (\delta A \alpha \cdot \nabla) \omega' =  
{\mathcal O}( (\nabla^2 A \alpha) \omega' ) + {\mathcal O}( (\nabla^2 A \omega) \alpha ).
\end{equation}
Taking inner products with $\alpha |\alpha|^{p-2}$ as before, we see that
$$ \partial_t \| \alpha \|_{L^p} \lesssim \| \nabla A \alpha \nabla \omega' \|_{L^p} + \| \nabla^2 A \alpha \omega' \|_{L^p} + \| \nabla^2 A \omega \alpha \|_{L^p}.$$
Using Sobolev embedding and H\"older as in \eqref{sm}, as well as the boundedness of $\nabla^2 A$ on $L^p$, we conclude that
$$ \partial_t \| \alpha \|_{L^p} \lesssim \|\alpha \|_{L^p} (\| \omega' \|_{H^s} + \| \omega \|_{H^s})$$
and hence by \eqref{oso} and Gronwall's inequality we have
\begin{equation}\label{alp}
 \| \alpha(t) \|_{L^p} \lesssim_{M,T} \| \alpha(0) \|_{L^p}
\end{equation}
for all $t \in [0,T]$.  
 %A similar argument gives
%$$ \partial_t \| \alpha \|_{L^2} \leq O( (\| \alpha\|_{L^p \cap L^2}) (\| \omega' \|_{H^s} + \| \omega \|_{H^s}) )$$
%and hence by a further application of \eqref{oso} and Gronwall's inequality we have
%\begin{equation}\label{alp-2}
 %\| \alpha(t) \|_{L^p} + \| \alpha(t) \|_{L^2} \lesssim_{M,T} \| \alpha(0) \|_{L^p} + \| \alpha(0) \|_{L^2}.
%\end{equation}

Next, if $D$ is a constant coefficient operator of order $k$ for some $k \leq s$, then upon applying $D$ to \eqref{Diffa} we have
\begin{equation}\label{Diffb}
\partial_t D\alpha + (\delta A \omega \cdot \nabla) D \alpha  =  F'
\end{equation}
where
$$ F' \coloneqq
[\delta A \omega , D] \cdot \nabla \alpha + [\delta A \alpha, D] \cdot \nabla \omega'
- (\delta A \alpha \cdot \nabla) D\omega' 
+ {\mathcal O}( D((\nabla^{2} A \alpha) \omega )) + {\mathcal O}( D((\nabla^{2} A \omega') \alpha )).
$$
Multiplying \eqref{Diffb} by $D\alpha$ and then integrating by parts, we conclude that
$$
\partial_t \| D \alpha \|_{L^2}^2 \lesssim \| D \alpha \|_{L^2} \| F' \|_{L^2}$$
On the other hand, by using \eqref{tools}, \eqref{commutator}, \eqref{pass} as before, followed by \eqref{oso}, we have
\begin{align*}
\|F'\|_{L^2} &\lesssim \| \alpha\|_{H^k} ( \| \omega \|_{H^s} + \| \omega' \|_{H^s} ) + \| (\delta A \alpha \cdot \nabla) D\omega'\|_{L^2} \\
&\lesssim_{M,T} \| \alpha\|_{H^k} + \| \nabla A \alpha \|_{L^\infty} \| \omega'\|_{H^{k+1}}.
\end{align*}
Summing over a suitable set of $D$, we conclude that
$$
\partial_t \| \alpha \|_{H^k}^2 \lesssim_{M,T} \| \alpha\|_{H^k}^2 + \| \alpha\|_{H^k} \| \nabla A \alpha \|_{L^\infty} \| \omega'\|_{H^{k+1}}
$$
and thus
\begin{equation}\label{fod}
\partial_t \| \alpha \|_{H^k} \lesssim_{M,T} \| \alpha\|_{H^k} + \| \nabla A \alpha \|_{L^\infty} \| \omega'\|_{H^{k+1}}
\end{equation}
for any $0 \leq k \leq s$.

When $m \geq 3$, we can use Sobolev embedding and \eqref{pass} to bound
$$ \| \nabla A \alpha \|_{L^\infty} \lesssim \| \nabla^2 A \alpha \|_{H^{s-1}} \lesssim \| \alpha \|_{H^{s-1}}.$$
However when $m=2$ the situation is more delicate.  If $P_{hi}$ and $P_{lo}$ denote the Fourier projections used previously, we have
$$ \| P_{hi} \nabla A \alpha \|_{L^\infty} \lesssim \| \nabla^2 A \alpha \|_{H^{s-1}} \lesssim \| \alpha \|_{H^{s-1}}$$
For $P_{lo}$, we see for any $N \geq 2$ using the Bernstein and Cauchy-Schwarz inequalities, as well as Plancherel's theorem, that
\begin{align*}
 \| P_{lo} \nabla A \alpha \|_{L^\infty} &\lesssim \sum_{M \leq 1} \| P_M \nabla A \alpha \|_{L^\infty} \\
&\lesssim \sum_{M \leq 1} \| P_M \nabla^2 A \alpha \|_{L^2} \\
&\lesssim \sum_{M \leq N^{-C}} M^{1/2-1/p} \| \nabla^2 A \alpha \|_{L^p}
+ \sum_{N^{-C} < M \leq 1} \| P_M \nabla^2 A \alpha \|_{L^2} \\
&\lesssim \frac{1}{N^2} \| \nabla^2 A \alpha \|_{L^p}
+ \sqrt{\log N} \left(\sum_{N^{-C} < M \leq 1} \| P_M \nabla^2 A \alpha \|_{L^2}^2\right)^{1/2} \\
&\lesssim \frac{1}{N^2} \| \nabla^2 A \alpha \|_{L^p} + \sqrt{\log N} \| \nabla^2 A \alpha \|_{L^2}
\end{align*}
where $C \coloneqq 2 / (1/p -1/2)$, $M$ ranges over dyadic numbers $M = 2^m$, $m \in \Z$, and $P_M$ is a Littlewood-Paley type Fourier projection to frequencies comparable to $M$.  Since $\nabla^2 A$ is bounded on $L^p$, $L^2$, and $H^{s-1}$, we conclude for any choice of $m$ that
$$ \| \nabla A \alpha \|_{L^\infty} \lesssim \frac{1}{N^2} \| \alpha \|_{L^p} + \sqrt{\log N} \| \alpha \|_{L^2} + \| \alpha \|_{H^{s-1}}$$
Inserting these bounds and \eqref{alp} into \eqref{fod}, we have
\begin{equation}\label{fodo}
\partial_t \| \alpha \|_{H^k} \lesssim_{M,T} \| \alpha\|_{H^k} + \left(\frac{1}{N^2} \| \alpha(0) \|_{L^p} + \sqrt{\log N} \| \alpha \|_{L^2} + \| \alpha \|_{H^{s-1}}\right) \| \omega'\|_{H^{k+1}}
\end{equation}
for any $0 \leq k \leq s$ and $N \geq 2$.  We first apply this bound with $k=0$ using \eqref{oso} to obtain
$$
\partial_t \| \alpha \|_{L^2} \lesssim_{M,T} \frac{1}{N} \| \alpha(0) \|_{L^p} + \sqrt{\log N} \| \alpha \|_{L^2} $$
and hence by Gronwall's inequality
$$ 
\| \alpha(t) \|_{L^2} \lesssim_{M,T} \exp( O_{M,T}(\sqrt{\log N})) \left(\frac{1}{N^2} \| \alpha(0) \|_{L^p} + \| \alpha(0) \|_{L^2}\right) $$
which on re-insertion back into \eqref{fodo} and conceding some powers of $N$ give
\begin{equation}\label{fodo-2}
\partial_t \| \alpha \|_{H^k} \lesssim_{M,T} \| \alpha\|_{H^k} + \left(\frac{1}{N} \| \alpha(0) \|_{L^p} + N^{s-1} \| \alpha(0) \|_{L^2} + \| \alpha \|_{H^{s-1}}\right) \| \omega'\|_{H^{k+1}}.
\end{equation}
 Applying this bound with $k=s-1$ and using \eqref{oso}, we conclude that
$$
\partial_t \| \alpha \|_{H^{s-1}} \lesssim_{M,T} \| \alpha\|_{H^{s-1}} + \frac{1}{N} \| \alpha(0) \|_{L^p} + N^{s-1} \| \alpha(0) \|_{L^2} $$
and hence by Gronwall's inequality
$$ \| \alpha(t) \|_{H^{s-1}} \lesssim_{M,T} \| \alpha(0)\|_{H^{s-1}} + \frac{1}{N} \| \alpha(0) \|_{L^p} + N^{s-1} \| \alpha(0) \|_{L^2}$$
for any $0 \leq t \leq T$.  Inserting this back into \eqref{fodo} for $k=s$ and using \eqref{omb}, we have
$$
\partial_t \| \alpha \|_{H^s} \lesssim_{M,T} \| \alpha\|_{H^s} + \left(\frac{1}{N} \| \alpha(0) \|_{L^p} + N^{s-1} \| \alpha(0) \|_{L^2} + \| \alpha(0) \|_{H^{s-1}}\right) \| \omega'(0)\|_{H^{s+1}}
$$
and hence by Gronwall's inequality 
$$
\| \alpha(t) \|_{H^s} \lesssim_{M,T} \| \alpha(0)\|_{H^s} + \left(\frac{1}{N} \| \alpha(0) \|_{L^p} + N^{s-1} \| \alpha(0) \|_{L^2} + \| \alpha(0) \|_{H^{s-1}}\right) \| \omega'(0)\|_{H^{s+1}}
$$
for any $0 \leq t \leq T$.  Combining this with \eqref{alp} and the definition of $\alpha$, we have
\begin{equation}\label{frodo}
\begin{split}
\| \omega'(t)-\omega(t) \|_{L^p \cap H^s} &\lesssim_{M,T} \| \omega'_0 - \omega_0\|_{L^p \cap H^s} \\
&\quad + \left(\frac{1}{N} \| \omega'_0 - \omega_0 \|_{L^p} + N^{s-1} \| \omega'_0 - \omega_0 \|_{L^2} + \| \omega'_0 - \omega_0\|_{H^{s-1}}\right) \| \omega'_0\|_{H^{s+1}}.
\end{split}
\end{equation}

Let $\eps > 0$.  If we let $\tilde \omega_0$ be initial data in $L^p \cap H^s \cap B_2(\M)$ that is sufficiently close to $\omega_0$ in $L^p \cap H^s$ norm (depending on $\eps,N$), and let $\tilde \omega$ be the corresponding solution to \eqref{vort-comb}, the same argument (replacing $\omega$ with $\tilde \omega$) gives
\begin{align*}
\| \omega'(t)-\tilde \omega(t) \|_{L^p \cap  H^s} &\lesssim_{M,T}  \| \omega'_0 - \omega_0\|_{L^p \cap H^s} \\
&\quad + \left(\frac{1}{N} \| \omega'_0 - \omega_0 \|_{L^p} + N^{s-1} \| \omega'_0 - \omega_0 \|_{L^2} + \| \omega'_0 - \omega_0\|_{H^{s-1}}\right) \| \omega'_0\|_{H^{s+1}} + \eps
\end{align*}
and thus by the triangle inequality 
\begin{align*}
 \| \omega(t)-\tilde \omega(t) \|_{L^p \cap H^s} &\lesssim_{M,T} \| \omega'_0 - \omega_0\|_{L^p \cap H^s} \\
&\quad + \left(\frac{1}{N} \| \omega'_0 - \omega_0 \|_{L^p} + N^{s-1} \| \omega'_0 - \omega_0 \|_{L^2} + \| \omega'_0 - \omega_0\|_{H^{s-1}}\right) \| \omega'_0\|_{H^{s+1}} + \eps
\end{align*}
for $\tilde \omega_0$ sufficiently close to $\omega_0$ in $L^p \cap H^s \cap B_2(\M)$.

If we now let $\omega'_0$ be a smoothed Fourier projection of $\omega_0$ (of Littlewood-Paley type) to frequencies less than $N$, we see from Plancherel's theorem and dominated convergence that
\begin{align*}
\|\omega'_0 - \omega_0 \|_{L^p \cap H^s} &\lesssim \eps \\
 N^s \| \omega'_0- \omega_0 \|_{L^2} &\lesssim \eps \\
 N \| \omega'_0- \omega_0 \|_{H^{s-1}} &\lesssim \eps \\
\frac{1}{N} \| \omega'_0 \|_{H^{s+1}}  &\lesssim \eps
\end{align*}
for $N$ large enough, and thus 
\begin{equation}\label{sort}
\sup_{0 \leq t \leq T} \| \omega(t)-\tilde \omega(t) \|_{L^p \cap H^s} \lesssim_{M,T} \eps 
\end{equation}
whenever $\tilde \omega_0$ is sufficiently close to $\omega_0$.  Writing $\beta \coloneqq \omega - \tilde \omega$, we have the difference equation
$$
\partial_t \beta = - (\delta A \tilde \omega \cdot \nabla) \beta - (\delta A \beta \cdot \nabla) \omega +  
{\mathcal O}( (\nabla^2 A \beta) \omega ) + {\mathcal O}( (\nabla^2 A \tilde \omega) \beta ).$$
Using \eqref{sort}, \eqref{omam}, and \eqref{pass}, all the terms on the right-hand side can be computed to have an $H^{s-1}$ norm of $O_{M,T}(\eps)$, and so
\begin{equation}\label{sort-2}
 \sup_{0 \leq t \leq T}\| \partial_t(\omega(t)-\tilde \omega(t)) \|_{ H^{s-1}} \lesssim_{M,T} \eps.
\end{equation}
The estimates \eqref{sort}, \eqref{sort-2} will give continuous dependence of the solution map $\omega_0 \mapsto \omega$ from $L^p \cap H^s \cap B_2(\M)$ to $X^{s,p}$ as soon as we establish that the solution $\omega$ actually lies in $X^{s,p}$.  We already have the $L^\infty$ regularity \eqref{linfty}; we now upgrade this to $C^0$ regularity.  By approximating $\omega_0$ by initial data in $L^p \cap H^{s+1} \cap B_2(\M)$ and using the continuity estimates already established, it suffices to establish $C^0$ regularity under the hypothesis that $\omega_0 \in L^p \cap  H^{s+1} \cap B_2(\M)$.  By \eqref{omb} we then have
$$ \| \omega(t) \|_{L^p} + \| \omega(t) \|_{ H^{s+1}} \lesssim_{\omega_0} 1 $$
for all $0 \leq t \leq T$;  from \eqref{oma}, the H\"older and Sobolev inequalities, and \eqref{pass} we then have
$$ \| \partial_t \omega(t) \|_{H^s} \lesssim_{\omega_0} 1 $$
and from repeating the proof of \eqref{sm} we also have
$$ \| \partial_t \omega(t) \|_{L^p} \lesssim_{\omega_0} 1 $$
 and on integrating in $t$ this gives the desired continuity in time in the $L^p \cap H^s(\M)$ topology.  Finally, once we know that $\omega$ lies in $C([0,T] \to L^p \cap  H^s(\M))$, we can use \eqref{oma}, \eqref{pass}, and the H\"older and Sobolev inequalities to conclude that $\partial_t \omega$ lies in $C([0,T] \to H^{s-1}(\M))$, and so $\omega$ lies in $X^{s,p}$ as required.  Setting $u \coloneqq \delta A \omega$, we also have $u \in Y^{s,p}$ by \eqref{pass} and the boundedness of the Calder\'on-Zygmund operator $\nabla^2 A$ on $L^p$; this also gives continuous dependence of $u$ on $\omega_0$.  This concludes the proof of the local wellposedness portion of Theorem \ref{lest}.

Now we establish the Beale-Kato-Majda criterion.  Suppose \emph{a priori} that we have a solution $\omega \in X^{s,p}, u \in Y^{s,p}$ to \eqref{vort-1}, \eqref{vort-2} up to some (possibly large) time $0 < T_* < \infty$ with the bounds
\begin{equation}\label{hsm}
\| \omega(0)\|_{L^p} +  \| \omega(0) \|_{H^s} \leq M
\end{equation}
and
\begin{equation}\label{bkm-bound}
 \int_0^{T_*} \| \omega(t) \|_{L^\infty}\ dt \leq M
\end{equation}
for some $0 < M < \infty$.
Multiplying \eqref{dap} by $\omega$ and integrating, we have
$$
\partial_t \| \omega \|_{L^2}^2 \lesssim \| \nabla^2 A \omega \|_{L^2} \| \omega \|_{L^\infty} \| \omega \|_{L^2}$$
and hence by \eqref{pass}
$$
\partial_t \| \omega \|_{L^2}^2 \lesssim \| \omega \|_{L^\infty} \| \omega \|_{L^2}^2$$
and hence by Gronwall's inequality and \eqref{hsm}, \eqref{bkm-bound} one has
\begin{equation}\label{2b}
 \| \omega(t) \|_{L^2} \lesssim_{M,T_*} 1
\end{equation}
for all $0 \leq t \leq T_*$.

Next, let $D$ be a constant coefficient differential operator of order $k \leq s$, with no lower order terms.  From \eqref{dow}, \eqref{x2} we have
$$
\partial_t \| D\omega \|_{L^2}^2 \lesssim (\| \nabla^2 A \omega \|_{L^\infty} \| \nabla^k \omega \|_{L^2} + \| \nabla^{k+2} A \omega \|_{L^2} \| \omega \|_{L^\infty}) \|D\omega\|_{L^2} $$
and hence on summing over suitable $D$ and using \eqref{pass}
$$ \partial_t \| \omega \|_{H^s}^2 \lesssim \| \omega \|_{H^s}^2 ( \| \omega \|_{L^\infty} + \| \nabla^2 A \omega \|_{L^\infty} ).$$

We now establish a key logarithmic inequality, as in \cite{bkm}:

\begin{lemma}  We have
$$
\| \nabla^2 A \omega \|_{L^\infty} \lesssim 1 + \|\omega\|_{L^2} + \|\omega\|_{L^\infty} \log( 2 + \| \omega \|_{H^s} ).$$
\end{lemma}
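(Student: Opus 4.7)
The plan is to use a Littlewood-Paley decomposition with a logarithmic frequency cutoff, in the spirit of the original Beale-Kato-Majda argument. I write
$$
\nabla^2 A \omega = \nabla^2 A P_{\leq 1} \omega + \sum_{1 < N \leq N_0} \nabla^2 A P_N \omega + \nabla^2 A P_{>N_0} \omega
$$
where $N$ ranges over dyadic integers and $N_0 \geq 2$ is a threshold to be chosen. The endpoint bounds will come from estimating each piece and then optimising in $N_0$.

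For the very low frequencies, Bernstein (which is available on $\R^m \times (\R/\Z)^{d-m}$ for $m \geq 2$, since only a bounded range of frequencies contributes to $P_{\leq 1}$) together with the $L^2$-boundedness of $\nabla^2 A$ from \eqref{pass} gives
$\|\nabla^2 A P_{\leq 1}\omega\|_{L^\infty} \lesssim \|\nabla^2 A P_{\leq 1}\omega\|_{L^2} \lesssim \|\omega\|_{L^2}$. For the high frequencies, I use Sobolev embedding at scale $s-1$ (noting $s - 1 > d/2$) and the fact that $\nabla^2 A$ preserves $H^{s-1}$ via \eqref{pass}:
$$
\|\nabla^2 A P_{>N_0}\omega\|_{L^\infty} \lesssim \|\nabla^2 A P_{>N_0}\omega\|_{H^{s-1}} \lesssim \|P_{>N_0}\omega\|_{H^{s-1}} \lesssim N_0^{-1} \|\omega\|_{H^s}.
$$

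The main ingredient is the bound on the intermediate range: for each dyadic $N$,
$$
\|\nabla^2 A P_N \omega\|_{L^\infty} \lesssim \|\omega\|_{L^\infty}.
$$
This is the step that I expect to be the most delicate, since $\nabla^2 A$ is a Calderón--Zygmund type operator and hence not bounded on $L^\infty$ without frequency localisation. However, the kernel estimates \eqref{nij} with $i+j=2$ give a kernel of size $|x-y|^{-d}$ near the diagonal and better decay at infinity, which is the standard Calderón--Zygmund structure; composing with a Littlewood-Paley projector $P_N$ produces a convolution kernel whose $L^1$ norm is $O(1)$ uniformly in $N$ (the usual Mikhlin/Littlewood--Paley argument, applied to the smooth multiplier part of $\nabla^2 A$ at a single dyadic scale). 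With this in hand, summing over dyadic $1 < N \leq N_0$ yields
$$
\sum_{1 < N \leq N_0} \|\nabla^2 A P_N \omega\|_{L^\infty} \lesssim \log(2+N_0)\, \|\omega\|_{L^\infty}.
$$

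Putting the three estimates together gives
$$
\|\nabla^2 A\omega\|_{L^\infty} \lesssim \|\omega\|_{L^2} + \log(2+N_0)\,\|\omega\|_{L^\infty} + N_0^{-1}\|\omega\|_{H^s}.
$$
Choosing $N_0 \coloneqq 2 + \|\omega\|_{H^s}$ makes the high-frequency contribution $O(1)$ and produces the logarithm $\log(2+\|\omega\|_{H^s})$, yielding the stated inequality. The only non-routine point is verifying the uniform $L^\infty\to L^\infty$ bound for $\nabla^2 A P_N$ purely from Definition \ref{reason-def}; but this follows by writing $\nabla^2 A P_N$ as convolution with $\int \nabla_x^2 K(x,y)\,\check{\varphi}_N(y-\cdot)\,dy$-type kernels and using the Calderón-Zygmund bounds together with the mean-zero cancellation of the Littlewood-Paley kernel, a standard argument.
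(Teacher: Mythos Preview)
Your overall strategy is correct and is a genuine alternative to the paper's argument: you decompose in frequency (Littlewood--Paley), whereas the paper works in physical space (mollify $\nabla^2 A\omega$ at scale $1/N$ near a fixed point $x_0$, then split $\omega$ into the pieces $|y-x_0|\le 2/N$ and $|y-x_0|>2/N$, using \eqref{pass} and Cauchy--Schwarz for the near piece and the kernel bound \eqref{nij} for the far piece). Both routes lead to the same optimisation $N_0 = 2+\|\omega\|_{H^s}$.

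The one point that needs more than you indicate is the per-scale bound $\|\nabla^2 A P_N\omega\|_{L^\infty}\lesssim\|\omega\|_{L^\infty}$. Your justification (``Calder\'on--Zygmund bounds together with the mean-zero cancellation of the Littlewood--Paley kernel'') implicitly treats $\nabla^2 A$ as a Fourier multiplier, but Definition~\ref{reason-def} does \emph{not} assume translation invariance. Writing the kernel of $\nabla^2 A P_N$ as $L_N(x,y)=\int \nabla_x^2 K(x,z)\check\varphi_N(z-y)\,dz$, the mean-zero of $\check\varphi_N$ and \eqref{nij} do give $|L_N(x,y)|\lesssim N^{-1}|x-y|^{-d-1}$ for $|x-y|\gg N^{-1}$, which integrates to $O(1)$; but for the near-diagonal region $|x-y|\lesssim N^{-1}$ the kernel size bound alone is not enough (the integral $\int_{|z-x|\le 1/N}\nabla_x^2K(x,z)\,dz$ has no reason to be bounded from \eqref{nij}). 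You need to invoke \eqref{pass} here --- for instance by pairing $\nabla^2 A$ against a bump at scale $N^{-1}$ and using Cauchy--Schwarz, exactly the mechanism the paper uses for its ``near'' piece. Once that is done your argument closes; but this is precisely the step where the paper's physical-space splitting is cleaner, since it builds the use of $L^2$-boundedness into the proof from the outset rather than having to recover it inside a per-scale estimate.
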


\begin{proof}  Let $N \geq 2$ be a parameter to be chosen later.  From \eqref{pass} and Sobolev embedding we have
$$ \| \nabla^3 A \omega \|_{L^\infty} \lesssim \| \nabla^2 A \omega \|_{H^s} \lesssim \| \omega \|_{H^s} $$
and hence for any $x_0 \in \M$, we have
$$ |\nabla^2 A \omega(x_0)| \lesssim N^d \left|\int_{\M} \phi( N(x-x_0)) \nabla^2 A \omega(x)\ d\operatorname{vol}(x)\right| + \frac{1}{N} \| \omega \|_{H^s}$$
for some fixed bump function $\phi$ supported on the unit ball $B_\M(0,1)$, where of course
$$ B_\M(x_0,r) \coloneqq \{ x \in \M: |x-x_0| \leq r \}$$
is the ball of radius $r$ centred at $x_0$ with respect to the distance associated with the Euclidean metric $\eta$ on $\M$.
We split $\omega = \omega 1_{B_\M(x_0,2/N)} + \omega (1 - 1_{B_\M(x_0,2/N)})$.  The former term has an $L^2$ norm of $O( N^{-d/2} \| \omega \|_{L^\infty})$, hence by \eqref{pass} and Cauchy-Schwarz, we have
$$
N^d \left|\int_{\M} \phi( N(x-x_0)) \nabla^2 A (\omega 1_{B_\M(x_0,2/N)})(x)\ d\operatorname{vol}(x)\right| \lesssim \| \omega \|_{L^\infty}.$$
Now we turn to the contribution of $\omega (1 - 1_{B_\M(x_0,2/N)})$.  Using the kernel representation \eqref{kernel-rep} of $A$, we can bound
\begin{align*}
&N^d \left|\int_{\M} \phi( N(x-x_0)) \nabla^2 A (\omega (1-1_{B_\M(x_0,2/N)}))(x)\ d\operatorname{vol}(x)\right| \\
&\quad \lesssim N^3 \int_{B_\M(x_0,1/N)} \int_{\M \backslash B_\M(x_0,2/N)} |\nabla_x^2 K(x,y)| |\omega(y)| \ d\operatorname{vol}(y) d\operatorname{vol}(x).
\end{align*}
From \eqref{nij} one has $|\nabla_x^2 K(x,y)| \lesssim |x-y|^{-d} + |x-y|^{-m}$.  Using $L^\infty$ bounds on $\omega$ for $y \in B_\M(x_0,1)$ and $L^2$ bounds elsewhere, we can bound the above expression by
$$ \| \omega \|_{L^\infty} \log N  + \| \omega \|_{L^2}$$
and hence
$$
\| \nabla^2 A \omega \|_{L^\infty} \lesssim \frac{1}{N} \| \omega \|_{H^s} + \|\omega\|_{L^2} + \|\omega\|_{L^\infty} \log N.$$
Setting $N \coloneqq 2 + \| \omega \|_{H^s}$, we obtain the claim.
\end{proof}

Using this inequality and \eqref{2b}, we thus have
$$ \partial_t \| \omega \|_{H^s}^2 \lesssim_{M,T_*} \| \omega \|_{H^s}^2  \left(1 + \| \omega \|_{L^\infty} \log(2 + \| \omega \|_{H^s}) \right)$$
and hence by the chain rule
$$ \partial_t \log(2+\| \omega \|_{H^s})\lesssim_{M,T_*} (1 + \| \omega \|_{L^\infty}) \log(2 + \| \omega \|_{H^s}).$$
Using Gronwall's inequality and \eqref{hsm}, we conclude the \emph{a priori} bound
$$ \| \omega \|_{H^s} \lesssim_{M,T_*} 1$$
all the way up to $T_*$; repeating the proof of \eqref{samo} we also have
$$ \| \omega \|_{L^p} \lesssim_{M,T_*} 1$$
all the way to this time.  By the local existence theory already established, this allows one to continue the solution beyond the time $T_*$.  Taking contrapositives, we obtain the Beale-Kato-Majda blowup criterion.

\section{Non-self-adjoint blowup: a simple one-dimensional example}

Our three blowup theorems will rely on a ``non-self-adjoint blowup'' mechanism in which the velocity field $u$ depends on the dynamic field (which will either be a scalar $\theta$ or a $2$-form $\omega$, depending on the dimensionality) in a non-self-adjoint fashion (though for the last two of our theorems, we will rely on an embedding trick to make the vector potential operator self-adjoint again).  To illustrate this mechanism, we begin with a simple blowup result for a (compressible) one-dimensional equation (a variant of the inviscid Burgers equation).  This result will not be directly used elsewhere in the paper, but may help illustrate the basic strategy of the arguments in subsequent sections.

\begin{proposition}[One-dimensional non-self-adjoint blowup]  Let $\theta_0\colon \R \to \R$ be a smooth function with $\theta_0(0)=0$ and $\theta_0(x)=1$ for all $x > 1/4$.  Then there does not exist a smooth bounded solution $u, \theta\colon [0,1] \times \R \to \R$ to the system
\begin{align}
\partial_t \theta + u \partial_x \theta &= 0 \label{uth} \\
u(t,x) &= -\theta(t,2x)\label{uth-2}
\end{align}
with initial data $\theta(0,x) = \theta_0(x)$.
\end{proposition}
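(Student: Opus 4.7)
The plan is to derive a contradiction at time $t = 1/4$: the value $\theta(1/4, 0)$ must simultaneously equal $0$ (because the origin is a fixed point of the characteristic flow and $\theta_0(0) = 0$) and equal $1$ (because the characteristic starting from $x_0 = 1/4$ reaches the origin by time $1/4$, carrying along the value $\theta_0(1/4) = 1$).

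First, I would put the equation in characteristic form: $X'(t) = u(t, X(t)) = -\theta(t, 2X(t))$. Under the assumed smoothness and boundedness of $\theta$, the velocity $u$ is smooth and bounded, so the characteristic ODE has unique solutions and generates a smooth global flow $\Phi_t\colon \R \to \R$ along which $\theta$ is constant: $\theta(t, \Phi_t(x_0)) = \theta_0(x_0)$. The constant curve $X \equiv 0$ is a solution of the ODE (consistent with $u(t,0) = -\theta(t,0)$ and $\theta(t,0) = \theta_0(0) = 0$ on the stationary characteristic), and uniqueness forces $\Phi_t(0) = 0$, hence $\theta(t,0) = 0$ for all $t \in [0,1]$.

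Next, define $f(t) := \Phi_t(1/4)$. By continuity of $\theta_0$ one has $\theta_0(1/4) = 1$, so $\theta(t, f(t)) = 1$. Since $\Phi_t$ is a monotone increasing homeomorphism of $\R$ and $\Phi_t(x_0) \to \infty$ as $x_0 \to \infty$ (thanks to the $L^\infty$ bound on $u$), one gets $\Phi_t([1/4, \infty)) = [f(t), \infty)$, whence $\theta(t, y) = 1$ for every $y \geq f(t)$. As long as $f(t) \geq 0$, the point $2f(t)$ lies in $[f(t), \infty)$, and hence $\theta(t, 2f(t)) = 1$, giving $f'(t) = -1$. This forces $f(t) = 1/4 - t$ on $[0, 1/4]$, so $f(1/4) = 0$ and therefore $\theta(1/4, 0) = \theta(1/4, f(1/4)) = 1$, contradicting the conclusion of the previous paragraph.

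The main point (and the step most likely to need extra care if one tried to weaken hypotheses) is establishing $\theta(t, 2f(t)) = 1$, since $2f(t)$ does not lie on the same characteristic as $f(t)$. This is precisely what the doubling in the velocity law $u(t,x) = -\theta(t, 2x)$ is designed to exploit: it converts the large-$x$ datum $\theta_0 = 1$ into an inward push at the origin, and the monotonicity of $\Phi_t$ together with the elementary inequality $2f(t) \geq f(t)$ is exactly what places $2f(t)$ inside the $\theta \equiv 1$ region $[f(t), \infty)$.
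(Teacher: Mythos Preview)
Your argument for $\theta(t,0)=0$ is circular. You assert that $X\equiv 0$ solves the characteristic ODE $X'(t)=-\theta(t,2X(t))$, but this requires $\theta(t,0)=0$, which is exactly what you are trying to prove; the parenthetical ``consistent with $\theta(t,0)=\theta_0(0)=0$ on the stationary characteristic'' only checks self-consistency, not that the actual characteristic through the origin is stationary. The fix is the one the paper uses: evaluate the transport equation at $x=0$ to get
\[
\partial_t \theta(t,0) = -u(t,0)\,\partial_x\theta(t,0) = \theta(t,0)\,\partial_x\theta(t,0),
\]
which with the bound on $\partial_x\theta$ and $\theta(0,0)=0$ gives $\theta(t,0)=0$ by Gronwall. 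Once you have that, your claim $\Phi_t(0)=0$ indeed follows by ODE uniqueness.

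With that gap patched, your second paragraph is correct and is in fact a cleaner version of the paper's barrier argument. The paper tracks a half-line $\Omega(t)=[(1-t)/2,\infty)$ expanding at speed $1/2$ and derives a contradiction from $u=-1$ on its boundary versus barrier speed $-1/2$. You instead track the actual characteristic $f(t)=\Phi_t(1/4)$, use monotonicity of the flow to get $\theta\equiv 1$ on $[f(t),\infty)$, and read off $f'(t)=-1$ exactly. This buys you a sharper conclusion (contradiction already at $t=1/4$ rather than merely before $t=1$) at the cost of relying a bit more explicitly on the global flow structure; the paper's barrier formulation is looser but is the template reused verbatim in the higher-dimensional sections.
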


Note that the negative dilation map that sends a function $x \mapsto \theta(x)$ to the function $x \mapsto -\theta(2x)$ is non-self-adjoint.  The system \eqref{uth}, \eqref{uth-2} transports the field $\theta$ at a position $x$ with a velocity that depends on the value of the field $\theta$ at the position $2x$; however, due to the non-self-adjointness, the value of $\theta$ at $x$ has no direct impact on the dynamics of $\theta$ at $2x$.  This one-way causality makes it easy to force the $\theta=1$ portion of the solution to collide with the $\theta=0$ portion to create the desired singularity; the point is that the ``front'' of the $\theta=1$ portion is being driven by the ``bulk'' of that portion, without any feedback in the opposite direction. This basic dynamic will also power all the rest of the blowup arguments in this paper.

\begin{proof}  Suppose for contradiction that there are $u, \theta$ with the claimed properties.  We use the barrier method, introducing a time-varying barrier $\Omega(t)$ which, on its boundary, expands slower than the velocity field.  More precisely, for each time $t \in [0,1]$, let $\Omega(t) \subset \R$ denote the half-line
$$ \Omega(t) \coloneqq [(1-t)/2, +\infty),$$
thus $\Omega(t)$ expands outwards at speed $1/2$ until it reaches the origin at time $t=1$; see Figure \ref{fig:1d}.
Let $T_*$ denote the supremum of all the times $0 \leq T_* \leq 1$ such that $\theta(t,x)=1$ for all $0 \leq t \leq T_*$ and $x \in \Omega(t)$.  From the initial condition $\theta=\theta_0$, and the fact that $\theta$ is transported by the bounded velocity field $u$, we see that $0 < T_* \leq 1$.  By continuity we see that $\theta(T_*)$ equals $1$ on $\Omega(T_*)$.

From \eqref{uth}, \eqref{uth-2} applied to $x=0$, we have
$$ \partial_t \theta(t,0) = \theta(t,0) \partial_x \theta(t,0).$$
Since $\theta(0,0) = \theta_0(0)=0$ and $\theta$ is smooth, we conclude from Gronwall's inequality that $\theta(1,0)=0$.  Since $\Omega(1)$ contains $0$, we conclude that $T_*$ cannot equal $1$, thus $0 < T_* < 1$.

Let $x_* \coloneqq (1-T_*)/2$.  By construction of $T_*$ and continuity, we have $\theta(T_*,x_*)=1$, but $\theta(T_*,x) \neq 1$ for $x$ arbitrarily close to $x_*$.  On the other hand, we have $\theta(t, x) = 1$ for all $0 \leq t \leq T_*$ and $x \in \Omega(t)$.  Since $x_*$ lies on the boundary of $\Omega(T_*)$, which moves at a velocity of $-1/2$, and $\theta$ is transported by the velocity field $u$, we conclude (by the method of characteristics) that
\begin{equation}\label{utsx}
u(T_*,x_*) \geq -1/2,
\end{equation}
otherwise one could flow $\theta$ backwards in time from $T_*$ and conclude that $\theta(t,x) \neq 1$ for some $t$ slightly less than $T_*$ and some $x$ barely inside $\Omega(t)$.

On the other hand, we have $\theta(T_*,x) = 1$ for all $x \geq (1-T_*)/2$, and hence from \eqref{uth-2} we see that $u(T_*,x_*) = -1$.  This contradicts \eqref{utsx} and gives the claim.
\end{proof}

	\begin{figure} [t]
\centering
\includegraphics[width=\textwidth]{./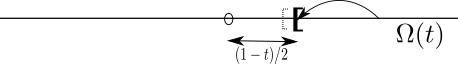}
\caption{The region $\Omega(t)$ (to the right of the solid bracket) and a slightly later region $\Omega(t+dt)$ (to the right of the dotted bracket).  The active scalar $\theta(t)$ is known to equal one on $\Omega(t)$, and to vanish at the origin (depicted here by a small circle).  The curved arrow from $2x$ to $x$ represents the one-way causality of the (non-self-adjoint) negative dilation operator in \eqref{uth-2} that sends $x \mapsto \theta(x)$ to $x \mapsto -\theta(2x)$. 
}
\label{fig:1d}
\end{figure}

\begin{remark}  The above argument suggests that, \emph{at best}, the solution $u$ will survive up to time $1$, and for times $t$ close to $1$ it will equal $1$ on the region $\Omega(t)$ and vanish at and to the left of $0$.  However, as the proof of the above proposition is by contradiction, it does not preclude the possibility that the solution $u$ in fact blows up sooner, and possibly with a qualitatively different dynamics\footnote{In the case when $\theta$ is non-negative and vanishing to the left of the origin, it may be possible to analyse the solution more carefully using some variant of the method of characteristics to obtain more definitive control on the blowup, for instance it seems possible to show that blowup at a point $x_0>0$ cannot occur if the solution remained regular in the region $x \geq 2x_0$, which on iteration suggests that blowup can only occur at (or to the left of) the origin.  We thank an anonymous referee for this observation.} than the one suggested here.  Similarly, the arguments used to prove the main theorems in our paper suggest a possible blowup mechanism, but do not ensure that this mechanism actually occurs because the solution may in fact blow up sooner, and in a different fashion, from that mechanism.
\end{remark}

\section{A non-self-adjoint blowup of an SQG-type equation}\label{nonself}

We now give a two-dimensional version of the argument in the previous section, establishing finite time blow up of an SQG type equation with a non-self-adjoint vector potential operator $A$.  The construction here can be viewed as a simplified version of the three-dimensional blowup construction used to establish Theorem \ref{first-blow}, and will also be embedded directly into the three-dimensional blowup constructions in Theorem \ref{second-blow} and Theorem \ref{third-blow}.

Consider the generalised Euler equation \eqref{vort-1}, \eqref{vort-2} on $\R^2$.  We formally write the vector potential operator $A\colon B_2(\R^2) \to \Gamma^2(\R^2)$ in coordinates as
$$ A ( \theta dx^1 \wedge dx^2 ) = (A_0 \theta) \frac{d}{d x^1} \wedge \frac{d}{d x^2}$$
for some linear operator $A_0\colon \Lambda_0(\R^2) \to \Lambda_0(\R^2)$ and all scalar functions $\theta\colon \R^2 \to \R$.  If we write the fields $\omega,u$ in coordinates as
$$ \omega = \theta dx^1 \wedge dx^2$$
and
$$ u = u^1 \frac{d}{d x^1} + u^2 \frac{d}{d x^2}$$
we thus arrive at the active scalar system
\begin{align}
\partial_t \theta + u^1 \partial_1 \theta + u^2 \partial_2 \theta &= 0 \label{ask}\\
u^1 &= \partial_2 (A_0\theta) \label{u1d}\\
u^2 &= - \partial_1 (A_0\theta).\label{u2d}
\end{align}
As noted in the introduction, the SQG equation corresponds to the case $A_0 = \Delta^{-1/2}$.
We now construct an operator $A_0$ which will behave like\footnote{In fact, it will be almost be a pseudodifferential operator in the exotic symbol class $S^{-1}_{1,1}$, as defined in \cite[Chapter VII]{stein:large}, in that the symbol obeys a large but finite number of the estimates required for this class.  We will not prove or use this fact here.} a non-self-adjoint variant of $\Delta^{-1/2}$, as follows.  We will need some cutoff functions:

\begin{itemize}
\item A Littlewood-Paley type cutoff $\gamma\colon \R \to \R$ which is smooth, non-negative, supported on $[1/2,2]$, and obeys the identity 
\begin{equation}\label{lp}
\sum_{j \in \Z} \gamma(2^j x) = 1
\end{equation}
for all $x>0$;
\item A smooth function $\psi\colon \R \to \R$ supported on $[-20,20]$ that equals $1$ on $[-10,10]$ and obeys the moment conditions $\int_\R \psi(x) P(x)\ dx = 0$ for all polynomials $P\colon \R \to \R$ of degree at most $1000$;
\item A smooth function $\varphi\colon \R^2 \to \R$ supported on $B_{\R^2}((0,10),1) \cup B_{\R^2}((0,-10),1)$ such that $\int_{B_{\R^2}((0,10),1)} \varphi(x)\ dx = 1$, but such that $\int_{\R^2} \varphi(x) P(x)\ dx = 0$ for all polynomials $P\colon \R \to \R$ of degree at most $1000$.
\end{itemize}
It is not difficult to construct examples of such cutoff functions $\gamma, \psi,\varphi$.  The moment conditions on $\psi, \varphi$ will not be needed in this section, but will become useful in Sections \ref{embed-sec}, \ref{period-sec}, when verifying that certain vector potential operators $A$ constructed using these functions are reasonable.

Let $M \geq 1$ be a sufficiently large constant (depending on $\gamma, \psi, \varphi$).  
The operator $A_0$ will now be defined for locally integrable $\theta$ as
\begin{equation}\label{stack}
 A_0(\theta)(x^1,x^2) \coloneqq\frac{2}{M} \sum_{k=0}^\infty 2^{2k} x^1 \gamma(2^k x^2) \psi( 2^k x^1 ) \int_{\R^2} \theta(y) \varphi(2^k y)\ d\operatorname{vol}(y).
\end{equation}
Note that if $\theta$ is supported on the upper half-plane $\{ (x^1,x^2): x^2 \geq 0 \}$, then the value of $A_0(\theta)$ near $(0,2^{-k})$ for some $k \geq 0$ is mostly driven by the behaviour of $\theta$ near $(0, 10 \times 2^{-k})$.  This is analogous to how, in the previous section, the value of the velocity field $u$ at a position $x$ was driven by the active scalar $\theta$ at position $2x$.  Roughly speaking, the operator $A_0$ is normalised so that it will produce a downward velocity $(\partial_2 A_0 \theta, -\partial_1 A_0 \theta)$ of $(0,-\frac{2}{M})$ near $(0,2^{-k})$ whenever the active scalar $\theta$ is equal to $1$ near $(0,10 \times 2^{-k})$.

We now have the following blowup result:

\begin{proposition}[Finite time blowup]\label{sqgb}  Let $\theta_0:\R^2 \to \R$ be smooth, compactly supported, vanishing in the half-plane $\{ (x^1,x^2): x^2 \leq 0 \}$, and equal to $1$ on the trapezoid $R \coloneqq \{ (x^1,x^2): \frac{1}{2M} \leq x^2 \leq 100; |x^1| \leq x^2 \}$.  Then there does not exist continuously differentiable and compactly supported fields $\theta, u^1, u^2\colon [0,1] \times \R^2 \to \R$ solving \eqref{ask}, \eqref{u1d}, \eqref{u2d} with $\theta(0)=\theta_0$.
\end{proposition}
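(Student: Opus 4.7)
The plan is to adapt the one-dimensional barrier argument of the preceding section. I will construct a time-dependent region $\Omega(t) \subset \R^2$ on which $\theta(t, \cdot) \equiv 1$ throughout an interval $[0, t_0]$ with $t_0 < 1$, arranged so that $\overline{\Omega(t_0)}$ contains the origin. Simultaneously I verify that the origin is a stationary point of the flow, giving $\theta(t, 0, 0) \equiv 0$, so that joint continuity of $\theta$ yields a contradiction at time $t_0$.

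Two preliminary structural observations. Since $\gamma$ is supported in $[1/2, 2]$, one has $\gamma(0) = 0$, and so \eqref{stack} forces $\partial_1 A_0 \theta \equiv 0$ on the line $\{x^2 = 0\}$; hence $u^2$ vanishes there, and by $C^1$-uniqueness of characteristics the upper half-plane $\{x^2 > 0\}$ is forward-invariant under the flow. In particular $\theta(t, \cdot)$ remains supported in $\{x^2 \geq 0\}$. Moreover the factor $x^1$ in \eqref{stack}, combined with $\gamma(0) = 0$, forces both components of $u$ to vanish at the origin, so the origin is a stationary point of the flow and $\theta(t, 0, 0) = \theta_0(0, 0) = 0$ for every $t \in [0, 1]$.

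The key dynamical identity is: whenever $(x^1, x^2)$ satisfies $x^2 \in (0, 1]$ and, for every $k \geq 0$ with $\gamma(2^k x^2) \neq 0$, both $|x^1| \leq 10 \cdot 2^{-k}$ and $\theta(t, \cdot) \equiv 1$ on $B_{\R^2}((0, 10 \cdot 2^{-k}), 2^{-k})$, one has $u(t, x^1, x^2) = (0, -\tfrac{2}{M})$. Indeed, the change of variables $z = 2^k y$, the normalization $\int_{B_{\R^2}((0,10),1)} \varphi\, dz = 1$, and the vanishing of $\theta$ in the lower half-plane (so the $B_{\R^2}((0,-10 \cdot 2^{-k}),2^{-k})$ piece of the support contributes nothing) give $\int \theta(y) \varphi(2^k y)\, d\operatorname{vol}(y) = 2^{-2k}$ at each such $k$; then $\psi \equiv 1$ and $\psi' \equiv 0$ on $[-10, 10]$, the identity \eqref{lp}, and its $y$-derivative $\sum_{k \in \Z} 2^k \gamma'(2^k y) = 0$ collapse \eqref{stack} and its first derivatives to $\partial_1 A_0\theta = \tfrac{2}{M}\sum_{k \geq 0} \gamma(2^k x^2) = \tfrac{2}{M}$ and $\partial_2 A_0 \theta = 0$ (for $x^2 \in (0, 1]$ the negative-$k$ terms of \eqref{lp} drop out).

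With the identity in hand, I would take $\Omega(t) := \{(x^1, x^2) : Y(t) \leq x^2 \leq 100,\; |x^1| \leq x^2\}$ with $Y(t) := \tfrac{1}{2M} - \tfrac{t}{M}$, so $\Omega(0) = R$, $\theta_0 \equiv 1$ on $\Omega(0)$, and $Y(t_0) = 0$ at $t_0 = \tfrac{1}{2}$. Define $T_*$ to be the supremum of those $T \in [0, t_0]$ for which $\theta(t, \cdot) \equiv 1$ on $\Omega(t)$ throughout $[0, T]$; joint continuity of $\theta$ ensures that this sup is attained. If $T_* < t_0$, I claim the velocity identity applies at every point of $\Omega(T_*) \cap \{x^2 \leq 1\}$: for the small-$k$ balls (those with $k$ satisfying $2^{-k} \in [x^2/2, 2 x^2]$) a direct size comparison places $B_{\R^2}((0, 10 \cdot 2^{-k}), 2^{-k})$ inside $\Omega(T_*)$ (its $x^1$-width $2^{-k}$ fits inside the trapezoid width $\geq 9 \cdot 2^{-k}$ at its height range $[9 \cdot 2^{-k}, 11 \cdot 2^{-k}] \subset [Y(T_*), 100]$), while balls whose centres fall in the ``quiescent'' region $\{x^2 > 2\}$, where $\gamma(2^k x^2) \equiv 0$ for all $k \geq 0$ and hence $u \equiv 0$, are automatically covered because $\theta$ is frozen at $\theta_0 \equiv 1$ on $R \supseteq \Omega(t) \cap \{x^2 > 2\}$. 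Consequently $u(T_*, \cdot) = (0, -\tfrac{2}{M})$ on the lower boundary $\{x^2 = Y(T_*)\}$; since this downward material speed strictly exceeds the $\tfrac{1}{M}$ descent rate of $Y(\cdot)$, the backward flow of $\Omega(T_* + \epsilon)$ by time $\epsilon$ sits inside $\Omega(T_*)$ for small $\epsilon > 0$ (a point at height $x^2 \geq Y(T_* + \epsilon)$ flows back to height $\geq Y(T_* + \epsilon) + \tfrac{2\epsilon}{M} > Y(T_*)$), and transport of $\theta$ then gives $\theta(T_* + \epsilon, \cdot) \equiv 1$ on $\Omega(T_* + \epsilon)$, contradicting maximality. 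Hence $T_* = t_0$, and choosing a sequence $(t_n, x_n) \in [0, t_0) \times \Omega(t_n)$ with $(t_n, x_n) \to (t_0, 0, 0)$ forces $\theta(t_0, 0, 0) = 1$ by joint continuity, contradicting $\theta(t_0, 0, 0) = 0$. The main technical obstacle is the scale-by-scale verification of the ball-containment hypothesis underlying the velocity identity: one needs the dichotomy between low-altitude balls (covered by $\Omega(T_*)$ via the size comparison) and high-altitude balls (covered by the quiescent region) to hold uniformly over $\Omega(T_*)$, and in particular one must check on the slanted lateral face $|x^1| = x^2$ that $|x^1| = x^2 \leq 2 \cdot 2^{-k} \leq 10 \cdot 2^{-k}$ at every active scale so that the $\psi \equiv 1$, $\psi' \equiv 0$ hypothesis persists all the way out to the sides.
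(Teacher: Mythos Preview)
Your argument has a genuine gap on the lateral faces of the trapezoid in the strip $1 < x^2 < 2$.  There your velocity identity breaks down: only the $k=0$ term in \eqref{stack} is active, and the truncated Littlewood--Paley sum $\sum_{k\geq 0}\gamma(2^k x^2)=\gamma(x^2)$ is strictly less than $1$ (the missing $k=-1$ term $\gamma(x^2/2)$ is nonzero).  A direct computation then gives, for $|x^1|<10$,
\[
u(T_*,x^1,x^2)=\Bigl(\tfrac{2}{M}\,x^1\gamma'(x^2),\,-\tfrac{2}{M}\gamma(x^2)\Bigr),
\]
and at the lateral boundary point $(x^2,x^2)$ with outward normal $n=(1,-1)/\sqrt{2}$ one finds
\[
n\cdot u \;=\; \tfrac{\sqrt{2}}{M}\,\bigl(x^2\gamma(x^2)\bigr)'.
\]
Since $x^2\gamma(x^2)$ runs from $1$ at $x^2=1$ to $0$ at $x^2=2$, this derivative is negative somewhere in $(1,2)$.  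At such a point the fluid moves \emph{inward} through the stationary lateral side, so the backward flow of the lateral boundary of $\Omega(T_*+\epsilon)$ lands \emph{outside} $\Omega(T_*)$, and you cannot conclude $\theta(T_*,\cdot)=1$ there.  Your backward-flow paragraph only checked the height constraint $x^2\geq Y(T_*)$ and implicitly assumed $u^1\equiv 0$ everywhere, which fails in this strip.

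This is precisely why the paper uses the hyperbolic barrier \eqref{omtdef}: the $(1+t)$ factor makes the barrier \emph{retract} inward at heights $x^2\gtrsim 1$, at a rate $\gtrsim 1$ independent of $M$.  Then only the crude bound $|u|=O(1/M)$ is needed in that region, and for $M$ large the barrier retraction dominates.  A fixed-slope trapezoid cannot absorb the $O(1/M)$ inward drift.  (A smaller, easily repaired issue: your upper edge $x^2=100$ sits on $\partial R$, so $\theta_0$ need not equal $1$ in a neighbourhood there; the paper takes the upper edge at $x^2=20$ for slack.)
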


We now prove this proposition.  Let $\theta_0$ be as in the proposition, and suppose for contradiction that such fields $\theta, u^1, u^2$ exist.  From \eqref{stack} we see that for all $0 \leq t \leq 1$, $A_0(\theta(t))$ vanishes on the half-space $\{ (x^1,x^2): x^2 < 0\}$, so by \eqref{u1d}, \eqref{u2d} we conclude that the velocity fields $u^1(t),u^2(t)$ do also; from \eqref{ask} and the vanishing of $\theta_0$ we conclude that $\theta(t)$ also vanishes here.  By continuity we thus have
\begin{equation}\label{thunk}
\theta(t,x^1,x^2)=0
\end{equation}
for all $0 \leq t \leq 1$ and $x^2 \leq 0$.

To obtain the required contradiction, we again use the barrier method.  For each time $t \in [0,1]$, let $\Omega(t) \subset \R^2$ denote the truncated hyperbolic region
\begin{equation}\label{omtdef}
 \Omega(t) \coloneqq \left\{ (x^1,x^2): \sqrt{((1+t)x^1)^2 + \left(\frac{1-t}{M}\right)^2} \leq x^2 \leq 20 \right\}
\end{equation}
(see Figure \ref{fig:parab}).  Informally, $\Omega(t)$ describes the region where we will be able to force $\theta(t)$ to take the value of $1$. 
Note that as $t$ increases from zero to one, the vertex $(0, \frac{1-t}{M})$ of this region is moving outwards (towards the origin) at a constant speed $\frac{1}{M}$, but the middle portion of the boundary (where $x^1,x^2$ are comparable to $1$) is instead moving inwards due to the narrowing of the hyperbola bounding $\Omega(t)$.  These dynamics are chosen to match the bounds we will be able to establish on the velocity field $u$ on the boundary of this domain.

	\begin{figure} [t]
\centering
\includegraphics[width=\textwidth]{./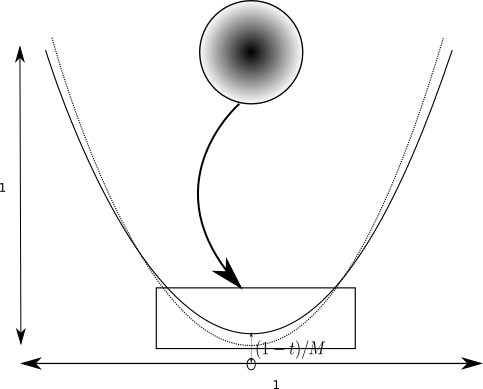}
\caption{A schematic depiction of $\Omega(t)$ (the region above the solid hyperboa, with the upper boundary $x^2 = 20$ out of view), together with a slightly later version $\Omega(t+dt)$ (the region above the dotted hyperbola).  The origin $(0,0)$ is marked by a small circle.  The rectangle and large disk represent the supports of the two functions $(x^1,x^2) \mapsto x^1 \gamma(2^k x^2) \psi( 2^k x^1 )$ and $y \mapsto \varphi(2^k y)$ respectively that occur in \eqref{stack}, for the value of $k$ that is of most importance at the time $t$.   (Actually, $y \mapsto \varphi(2^k y)$ also has a component supported below the $x^1$ axis, but this component will not be of relevance since $\theta$ vanishes there thanks to \eqref{thunk}.) The active scalar $\theta(t)$ is known to equal one on $\Omega(t)$, and to vanish at and below the $x^1$ axis.  The curved arrow signifies the one-way causality of the non-self-adjoint operator $A_0$.
}
\label{fig:parab}
\end{figure}

As in the previous section, let $T_*$ denote the supremum of all the times $0 \leq T_* \leq 1$ such that $\theta(t)$ is equal to $1$ on $\Omega(t)$ for all $0 \leq t \leq T_*$.  By hypothesis, $\theta_0$ equals $1$ on $\Omega(0)$, and so $0 \leq T_* \leq 1$.  In fact, since $\omega_0$ equals $1$ on a neighbourhood of $\Omega(0)$, and $\theta$ is transported by the continuous vector field $u^1 \frac{d}{dx^1} + u^2 \frac{d}{dx^2}$ thanks to \eqref{ask}, we have $T_* > 0$.  By continuity we see that $\theta(T_*)$ equals $1$ on $\Omega(T_*)$.  Since $\Omega(1)$ contains the origin, we conclude from \eqref{thunk} that $T_* < 1$.  Thus we have $0 < T_* < 1$.

As $\theta$ is transported continuously by $u \coloneqq u^1 \frac{d}{dx^1} + u^2 \frac{d}{dx^2}$, and $\Omega(t)$ is compact and varies continuously with $t$, there must exist a point $x_* = (x^1_*,x^2_*)$ on the boundary of $\Omega(T_*)$ which is also on the boundary of the set $\{x:\theta(T_*,x)=1\}$.  On the other hand, from \eqref{stack} we see that $A_0(\theta(t))$ is supported in the region $\{ (x^1,x^2): x^2 \leq 2 \}$ for all $t$, and so from \eqref{ask}, \eqref{u1d}, \eqref{u2d} we see that $\theta(t,x^1,x^2) = \theta_0(x^1,x^2)$ whenever $x^2 \geq 2$ and $0 \leq t \leq 1$.  Since $\theta_0=1$ on the trapezoid $R$, we conclude that $\theta(T_*)$ equals $1$ in a neighbourhood of $\{ (x^1,x^2) \in \Omega(T_*): x^2 \geq 2 \}$.  Thus we must have $x^2_* < 2$, and hence by \eqref{omtdef} we have
\begin{equation}\label{x21}
x^2_* = \sqrt{((1+T_*)x^1_*)^2 + \left(\frac{1-T_*}{M}\right)^2}.
\end{equation}
In particular,
\begin{equation}\label{xt}
\frac{1-T_*}{M} \leq x^2_* < 2.
\end{equation}
From \eqref{x21} we have
$$ \partial_t \left.\sqrt{((1+t)x^1)^2 + \left(\frac{1-t}{M}\right)^2}\right|_{t=T_*} = \frac{(1+T_*) (x^1_*)^2 - (1-T_*) / M^2}{x^2_*} $$
and thus $\Omega(T_*)$ expands outward at $(x^1_*,x^2_*)$ at velocity
\begin{equation}\label{nora}
\frac{(1+T_*) (x^1_*)^2 - (1-T_*) / M^2}{x^2_*} n^2
\end{equation}
where $n^2 < 0$ is the $x^2$ component of the outward unit normal $n$ of $\Omega(T_*)$ at $(x^1_*,x^2_*)$ (this expansion becomes negative for large $x^1_*$).  Since $\theta(t)$ is equal to $1$ on $\Omega(t)$ for $t \leq T_*$ and is transported by $u$, but $\theta(T_*,x^1,x^2)$ is not equal to $1$ for $(x^1,x^2)$ arbitrarily close to $(x^1_*,x^2_*)$, we conclude (on tracing characteristics backwards in time from $T_*$) the inequality
\begin{equation}\label{help}
n \cdot u(T_*,x^1_*,x^2_*) \leq \frac{(1+T_*) (x^1_*)^2 - (1-T_*) / M^2}{x^2_*} n^2,
\end{equation}
that is to say the outward normal velocity cannot exceed the expansion of the barrier at $(T_*,x_*)$.

To compute the left-hand side of \eqref{help}, we first compute $A_0(\theta)(T_*,x^1,x^2)$ for $(x^1,x^2)$ in a small neighbourhood of $(x^1_*,x^2_*)$.  We expand this quantity using \eqref{stack}.  From the support of $\gamma$, we need only restrict attention to those $k$ for which $2^{-k} \geq \frac{1}{2} x^2$; in particular, from \eqref{xt} and the restriction $k \geq 0$ we have
\begin{equation}\label{ja}
1 \geq 2^{-k} \geq \frac{1-T_*}{3M}
\end{equation}
for $(x^1,x^2)$ sufficiently close to $(x^1_*,x^2_*)$.
The function $y \mapsto \varphi(2^k y)$ in the integrand in \eqref{stack} is supported in $B_{\R^2}((0,10 \times 2^{-k}), 2^{-k}) \cup B_{\R^2}((0,-10 \times 2^{-k}), 2^{-k})$.  By \eqref{thunk}, $\theta$ vanishes on the latter ball $B_{\R^2}((0,-10 \times 2^{-k}), 2^{-k})$.  By \eqref{ja}, the ball $B_{\R^2}((0,10 \times 2^{-k}), 2^{-k})$ is contained in the truncated cone
$$ \left\{ (x^1,x^2): 9 \times \frac{(1-T_*)}{3M} \leq x^2 \leq 11; |x^1| \leq \frac{1}{9} x^2 \right\}$$
which can be seen in turn from \eqref{omtdef} and a brief calculation to lie in $\Omega(T_*)$.  By construction of $T_*$ and continuity, we have $\theta(T_*,y) = 1$ for all $y$ in $B_{\R^2}((0,10 \times 2^{-k}), 2^{-k})$, and hence
\begin{align*}
\int_{\R^2} \theta(y) \varphi(2^k y)\ d\operatorname{vol}(y) &= \int_{B_{\R^2}((0,10 \times 2^{-k}), 2^{-k})} \varphi(2^k y)\ d\operatorname{vol}(y)\\
&= 2^{-2k} \int_{B_{\R^2}((0,10),1)} \varphi(y)\ d\operatorname{vol}(y)\\
&= 2^{-2k}
\end{align*}
by construction of $\varphi$.  Inserting this into \eqref{stack}, we conclude that
$$ A_0(\theta)(T_*,x^1,x^2) = \frac{2}{M} \sum_{k=0}^\infty x^1 \gamma(2^k x^2) \psi( 2^k x^1 ).$$
From \eqref{x21} we have $|x^1_*| \leq x^2_*$, and hence 
$$ |x^1| \leq 2 |x^2|$$
for $(x^1,x^2)$ sufficiently close to $(x^1_*,x^2_*)$.  From the construction of $\gamma$ and $\psi$, we conclude that $\psi(2^k x^1)$ equals $1$ whenever $\gamma(2^k x^2)$ is non-zero.  Thus
\begin{equation}\label{ss}
 A_0(\theta)(T_*,x^1,x^2) = \frac{2}{M} x^1 \sum_{k=0}^\infty  \gamma(2^k x^2).
\end{equation}
If $x^2_* \leq 1/2$, then the constraint $k \geq 0$ can be dropped, and from \eqref{lp} we thus have $A_0(\theta)(x^1,x^2) = \frac{2}{M} x^1$.  From \eqref{u1d}, \eqref{u2d} we thus have
\begin{equation}\label{u1u2}
u(T_*,x^1_*,x^2_*) = \left(0, -\frac{2}{M}\right),
\end{equation}
and hence by \eqref{help} and the negativity of $n^2$
$$ \frac{2}{M} \leq  \frac{ (1-T_*) / M^2 - (1+T_*) (x^1_*)^2}{x^2_*}.$$
But this contradicts \eqref{xt} (discarding the negative term $(1+T_*) (x^1_*)^2$).  Thus we must have $x^2_* \geq 1/2$.  But then the quantity $\gamma(2^k x^2)$ only is non-zero for $k=0,1$.  Meanwhile, from \eqref{xt}, \eqref{x21} we have 
\begin{equation}\label{x12}
1 \lesssim |x^1_*|, x^2_* \lesssim 1.
\end{equation}
From \eqref{ss}, \eqref{u1d}, \eqref{u2d} we then have the crude bounds
$$ u(T_*,x^1_*,x^2_*) = O(1/M).$$
From \eqref{help} we thus have
$$ (-n^2) \frac{(1+T_*) (x^1_*)^2 - (1-T_*) / M^2}{x^2_*} \leq O(1/M).$$
On the other hand, from \eqref{x12} we have for $M$ large enough that $-n^2 \gtrsim 1$ and $\frac{(1+T_*) (x^1_*)^2 - (1-T_*) / M^2}{x^2_*} \gtrsim 1$, giving the required contradiction.  This concludes the proof of Proposition \ref{sqgb}.

\section{A stable, non-self-adjoint blowup}\label{3d-nonself}

In this section we prove Theorem \ref{first-blow}, using a three-dimensional variant of the argument\footnote{A simplified version of this argument, involving a non-compactly supported initial vorticity $\omega_0$, can be found at {\tt terrytao.wordpress.com/2016/02/01}.} used to prove Proposition \ref{sqgb}.  We will need a large constant $M > 1$ to be chosen later.  Now we select initial data $\omega_0 \in B_2(\R^3)$ with the following properties:

\begin{itemize}
\item $\omega_0$ is smooth and compactly supported.  When restricted to the ball $B_{\R^3}(0,100M)$, $\omega_0$ supported on the cylindrical region $\{ (x^1,x^2,x^3) \in B_{\R^3}(0,100M): (x^1)^2+(x^2)^2 \leq \frac{1}{M} \}$.
\item For any $-50M \leq x^3 \leq 50M$, one has the constant circulation 
\begin{equation}\label{const}
\int_{\{(x_1,x_2,x_3): (x^1)^2+(x^2)^2 \leq \frac{1}{M}\}} \omega_0 = 1
\end{equation}
where we give the disk $\{(x^1,x^2,x^3): (x^1)^2+(x^2)^2 \leq \frac{1}{M}\}$ the orientation of $\frac{d}{d x^1} \wedge \frac{d}{d x^2}$.
\end{itemize}

To create such an $\omega_0$, one can for instance set $\omega_0 = d \lambda$, where $\lambda \in C^\infty_c \cap \Lambda_1(\R^3)$ is chosen to be equal to the closed form
$$ \lambda = \frac{1}{2\pi} \frac{ x^1 dx^2 - x^2 dx^1 }{(x^1)^2+(x^2)^2}$$
in the region $\{ (x^1,x^2,x^3) \in B_{\R^3}(0,100M): (x^1)^2+(x^2)^2 \geq \frac{1}{M} \}$, but otherwise arbitrary outside of this region; the constant circulation \eqref{const} then follows from Stokes' theorem.  

Next, we construct the vector potential operator $A$.  We introduce the cylindrically radial variable
$$ r \coloneqq \sqrt{(x^1)^2 + (x^2)^2} $$
and the associated cylindrically radial vector field
$$
\frac{d}{d r} \coloneqq \frac{x^1}{r} \frac{d}{d x^1} + \frac{x^2}{r} \frac{d}{d x^2},
$$
defined away from the $x^3$ axis $\{ (0,0,x^3): x^3 \in \R\}$.
Our construction will be designed so that the velocity field $u = \delta A \omega$ will be equal to the inward cylindrically radial vector field 
$$-\frac{1}{Mr} \frac{d}{d r} = \frac{- x^1 \frac{d}{d x^1} - x^2 \frac{d}{d x^2}}{Mr^2}$$ 
in a certain key portion of physical space $\R^3$.  Observe that away from the $x^3$-axis, this field is divergence free, and can be written in turn as a divergence
\begin{equation}\label{dive}
 -\frac{1}{Mr} \frac{d}{d r} = \delta \frac{-x^1 x^3 \frac{d}{d x^1} \wedge \frac{d}{d x^3} 
 - x^2 x^3 \frac{d}{d x^2} \wedge \frac{d}{d x^3}}{Mr^2}.
\end{equation}
For technical reasons (having to do with ensuring that the vector potential operator $A$ we will construct is reasonable), we need to replace the $2$-vector field $\frac{-x^1 x^3 \frac{d}{d x^1} \wedge \frac{d}{d x^3} 
 - x^2 x^3 \frac{d}{d x^2} \wedge \frac{d}{d x^3}}{r^2}$ appearing on the right-hand side of \eqref{dive} by a variant $\alpha$ that enjoys better moment vanishing conditions.  More precisely, by inserting a suitable cutoff in the angular variable, one can find a $2$-vector field $\alpha \in \Gamma^2(\R^3)$ that is smooth away from the origin and homogeneous of degree zero, such that
$$ \alpha = \frac{-x^1 x^3 \frac{d}{d x^1} \wedge \frac{d}{d x^3} 
 - x^2 x^3 \frac{d}{d x^2} \wedge \frac{d}{d x^3}}{r^2} $$
and hence
\begin{equation}\label{ded}
 \delta \alpha = - \frac{1}{r}\frac{d}{d r} 
\end{equation}
in the exterior cone region $\{ (x^1,x^2,x^3): r >|x^3| \}$, and such that all moments of $\alpha$ vanish to order $1000$ (say) on each sphere, or in other words the three components $\alpha^{12}, \alpha^{13}, \alpha^{23}$ of $\alpha$ are such that
$$ \int_{S^2} \alpha^{ij}(\theta) P(\theta) d\theta = 0$$
for all polynomials $P\colon \R^3 \to \R$ of degree at most $1000$, where $S^2$ is the unit sphere in $\R^3$ and $d\theta$ denotes surface measure.

We introduce a smooth dyadic partition of unity of Littlewood-Paley type, writing
$$ 1 = \sum_{k \in \Z} \psi( 2^k x ) $$
for a suitable smooth, spherically symmetric function $\psi\colon \R^3 \to \R$ (not depending on $M$) supported on the annulus $\{ x: 1/2 \leq |x| \leq 2 \}$.  Clearly we can then decompose $\alpha = \sum_{k \in \Z} \alpha_k$, where $\alpha_k \in C^\infty_c \cap \Gamma^2(\R^3)$ is defined by the formula
$$ \alpha_k(x) \coloneqq \alpha(x) \psi(2^k x).$$
Next, we let $\varphi\colon \R^3 \to \R$ be a smooth compactly supported function (not depending on $M$) of the form
\begin{equation}\label{varph}
 \varphi(x^1,x^2,x^3) = \varphi_{12}( x^1,x^2) \varphi_3(x^3)
\end{equation}
where $\varphi_{12}\colon \R^2 \to \R$ is a smooth spherically symmetric function supported on the disk $B_{\R^2}(0,20)$ that equals one on the disk $B_{\R^2}(0,10)$ and obeys the moment conditions
\begin{equation}\label{12mom}
\int_{\R^2} \varphi_{12}(x^1,x^2) P(x^1,x^2)\ d\operatorname{vol}(x) = 0
\end{equation}
for any polynomial $P$ of degree at most $1000$, and $\varphi_3\colon \R \to \R$ is a smooth function supported on $[1,2]$ with the normalisation
\begin{equation}\label{ting}
 \int_1^2 \varphi_3(x^3)\ dx^3 = 1.
\end{equation}
We define the vector potential operator $A$ by the formula
\begin{equation}\label{Adef}
 A \omega(x) \coloneqq \sum_{k=0}^\infty \frac{2^k}{M^2} \alpha_k(x) \int_{\R^3} \omega_{12}(y) \varphi( 2^k y / M )\ d\operatorname{vol}(y)
\end{equation}
where $\omega_{12}$ is the $dx^1 \wedge dx^2$ component of $\omega$.  This operator $A$ is designed so that $A \omega$ will equal $\frac{1}{M} \alpha$ in regions where $\omega$ has circulation equal to one.  In particular, $u = \delta A \omega$ will equal $-\frac{1}{Mr} \frac{d}{dr}$ in these regions.

It is easy to see that the sum defining $A \omega$ is absolutely convergent for $\omega \in C^\infty_c \cap B_2(\R^3)$.  One can write $A$ as an integral operator
$$ A \omega(x) = \int_{\R^3} K(x,y) \omega_{12}(y)\ d\operatorname{vol}(y)$$
where the kernel $K$ is given by the formula
$$ K(x,y) =  \sum_{k=0}^\infty \frac{2^k}{M^2} \alpha \psi(2^k x) \varphi( 2^k y / M )$$
(here we exploit the hypothesis that $\alpha$ is homogeneous of degree zero).  Since $\alpha \psi, \varphi$ are smooth and compactly supported, we see that $K$ obeys the bounds \eqref{nij} for all $0 \leq i,j \leq 100$ (with implied constants depending on $M$); indeed, one can even replace the quantity $|x-y|$ in \eqref{nij} by the larger quantity $2\max(|x|, |y|)$, and obtain bounds for arbitrary $i,j \geq 0$ if one allows the implied constant to depend on these parameters.  Now we show \eqref{pass} (again with bounds depending on $M$).  It will suffice to establish the slightly stronger bounds
$$
\| A \omega \|_{\dot H^{r+2}(\M)} \lesssim_M \| \omega \|_{\dot H^rk(\M)}
$$
for all $0 \leq r \leq 100$.
By duality, it suffices to establish the bounds
$$ \int_{\R^3} \langle \beta(x), A \omega(x)\rangle\ d\operatorname{vol}(x) \lesssim_M \| \omega \|_{\dot H^s(\R^3)} \| \beta \|_{\dot H^{-s-2}(\R^3)}$$
for any $\beta \in \dot H^{-s-2} \cap \Lambda_2(\R^3)$ and any $0 \leq s \leq 100$.  By Littlewood-Paley decomposition and Schur's test, it suffices to show that
$$ \int_{\R^3} \langle \beta(x), A \omega(x)\rangle\ d\operatorname{vol}(x) \lesssim_M \min( N_1^{-2}, N_1^{200} N_2^{-202} )
\| \omega \|_{L^2(\R^3)} \| \beta \|_{L^2(\R^3)}$$
whenever $\omega, \beta \in C^\infty_c \cap \Lambda_2(\R^3)$ have Fourier transforms supported on the annuli $\{ \xi: |\xi| \sim N_1 \}$ and $\{ \eta: |\eta| \sim N_2\}$ respectively for some $N_1,N_2 > 0$.  The left-hand side may be expanded as $\sum_{k=0}^\infty \frac{2^k}{M^2} X_k Y_k$, where
$$ X_k \coloneqq \int_{\R^3} \langle \beta(x), \alpha \psi(2^k x) \rangle\ d\operatorname{vol}(x)$$
and
$$ Y_k \coloneqq \int_{\R^3} \omega_{12}(y) \varphi(2^k y / M)\ d\operatorname{vol}(y).$$
From the smoothness and moment conditions on $\alpha \psi$, the Parseval identity, and Cauchy-Schwarz, we see that
$$ X_k \lesssim \min(N_1/2^k, 2^k/N_1)^{300} 2^{-3k/2} \| \beta \|_{L^2(\R^3)}$$
for any $k$; similarly
$$ Y_k \lesssim_M \min(N_2/2^k, 2^k/N_2)^{300} 2^{-3k/2} \| \omega \|_{L^2(\R^3)}.$$
Inserting these bounds and summing in $k$, one obtains the claim.

We can now prove Theorem \ref{first-blow} with this choice of $\omega_0$ and $A$.  Suppose for contradiction that there is a solution $\omega \in X^{10,2}, u \in Y^{10,2}$ to \eqref{vort-1}, \eqref{vort-2} with $s=10$ on the time interval $[0,1]$.  This is enough regularity to interpret the equations \eqref{vort-1}, \eqref{vort-2} in the classical sense.  The velocity $u$ is bounded in $\R^3 \times [0,1]$, and the vorticity $\omega$ is transported by $u$ and is compactly supported at time zero, and is thus compactly supported in all of $\R^3 \times [0,1]$. From 
\eqref{Adef}, \eqref{vort-2} we see that $u$ is supported in the ball $B_{\R^3}(0,2)$, and thus by \eqref{vort-2} $\omega$ is stationary outside of this ball.

	\begin{figure} [t]
\centering
\includegraphics[width=\textwidth]{./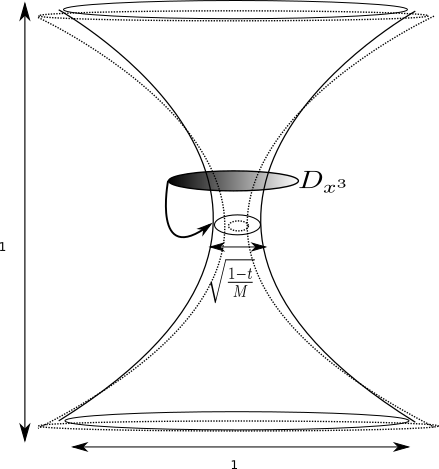}
\caption{A schematic depiction of $\Omega(t)$ (the region inside the hyperboloid, with the exterior of $B_{\R^3}(0,50M)$ out of view), together with a slightly later version $\Omega(t+dt)$ (the region inside the dotted hyperboloid).  The origin $(0,0)$ is marked by a small circle.  The vorticity $\omega$ is supported inside $\Omega(t)$, which allows one to use the Kelvin circulation theorem to calculate the circulation on disks $D_{x^3}$ such as the shaded one depicted here.  The curved arrow depicts the causal relationship in the non-self-adjoint vector potential $A$, which uses the circulation on disks such as $D_{x^3}$ to determine the velocity field in the ``neck'' of the hyperboloid.
}
\label{fig:squish}
\end{figure}

We once again use the barrier method.  For any time $0 \leq t \leq 1$, let $\Omega(t) \subset \R^3$ denote the region
\begin{equation}\label{omo}
 \Omega(t) \coloneqq (\R^3 \backslash B_{\R^3}(0,50M)) \cup \left\{ (x^1,x^2,x^3) \in \R^3: r \leq \sqrt{\frac{1-t}{M}+(1+t)(x^3)^2} \right\};
\end{equation}
inside the ball $B_{\R^3}(0,50M)$; this is the interior of a one-sheeted hyperboloid which pinches at the spatial origin $(0,0,0)$ at time $t=1$, while simultaneously becoming slightly wider away from this origin.  See Figure \ref{fig:squish}.  From the construction of $\omega_0$, we see that $\omega_0$ is supported in $\Omega(0)$; from continuity and the fact that the support of $\omega$ propagates at bounded speed, we see also that $\omega(t)$ is supported in $\Omega(t)$ for sufficiently small $t$.  Let $T_*$ be the supremum of all times $0 \leq T_* \leq 1$ for which $\omega(t)$ is supported in $\Omega(t)$ for all $0 \leq t \leq T_*$, then from the previous observation we have $0 < T_* \leq 1$, and from continuity $\omega(T_*)$ is supported in $\Omega(T_*)$.  We now claim the circulation identity
\begin{equation}\label{circ}
 \int_{D_{x^3}} \omega(t) = 1
\end{equation}
on the disk $D_{x^3} \coloneqq \{(x^1,x^2,x^3): r \leq 30M\}$
for all $-10M \leq x^3 \leq 10M$ and $0 \leq t \leq T_*$. For $t=0$, this follows from the construction of $\omega_0$.  The set of $0 \leq t \leq T_*$ for which the above bound holds is clearly closed in $t$.  Finally, if the above bound holds for some $0 \leq t < 1$, and $t'$ is a time slightly larger than $t$, then from conservation of circulation we have
$$ \int_S \omega(t') = 1$$
where $S$ is the image of the disk $D_{x^3}$ after flowing along the velocity field $u$ from time $t$ to time $t'$.  But if $t'$ is sufficiently close to $t$, $S$ is homologous to $D_{x^3}$ up to a thin annular strip outside of $\Omega(t')$, and so from Stokes theorem and the closed nature of $\omega$ we conclude that \eqref{circ} holds for all $t'$ slightly larger than $t$, and from a continuity argument we conclude that \eqref{circ} holds for all $0 \leq t \leq T_*$.

We can now exclude the case $T_*=1$, since in this case $\Omega(1)$ degenerates to a cone that only intersects the disk $D_0 = \{ (x^1,x^2,0): r \leq 30M \}$ at the origin $(0,0,0)$, contradicting \eqref{circ} and the regularity hypotheses on $\omega$.  Thus we have $0 < T_* < 1$.

By definition of $T_*$, and the continuity of $\omega$, there must be a point $x_* = (x^1_*,x^2_*,x^3_*)$ on the boundary of $\Omega(T_*)$ which is on the boundary of the support of $\omega(T_*)$.  Since $\omega(T_*)$ is equal to $\omega_0$ outside of $B_{\R^3}(0,2)$, and $\omega_0$ vanishes near the boundary of $\Omega(T_*)$, we must have $x_* \in B_{\R^3}(0,2)$.  From \eqref{omo} we conclude that the radial coordinate $r_* \coloneqq \sqrt{(x^1_*)^2 + (x^2_*)^2}$ is given by
$$
 r_* = \sqrt{\frac{1-T}{M}+(1+T)(x_*^3)^2}$$
which implies in particular that
\begin{equation}\label{cone}
\max\left( |x^3_*|, \sqrt{\frac{1-T_*}{M}} \right) \leq r \leq 2 \max\left( |x^3_*|, \sqrt{\frac{1-T_*}{M}} \right) 
\end{equation}
and hence by Pythagoras' theorem
\begin{equation}\label{cone-2}
\max\left( |x^3_*|, \sqrt{\frac{1-T_*}{M}} \right)  \leq  |x_*| \leq 3 \max\left( |x^3_*|, \sqrt{\frac{1-T_*}{M}} \right).
\end{equation}
On the other hand, if $n$ denotes the outward normal to $\Omega(T_*)$ at $x_*$, then since
\begin{align*}
\frac{d}{dt} \left.\sqrt{\frac{1-t}{M}+(1+t)(x^3_*)^2}\right|_{t=T_*} &= \frac{(x^3_*)^2 - \frac{1}{M}}{2} \frac{1}{\sqrt{\frac{1-T_*}{M}+(1+T_*)(x^3_*)^2}} \\
&= - \frac{\frac{1}{M} - (x^3_*)^2}{2r_*}
\end{align*}
we see that at $x_*$, $\Omega(T_*)$ is moving outwards at speed
$$ \frac{(x^3_*)^2 - \frac{1}{M}}{2r_*} n^r$$
where $n^r$ is the radial component of $n$ (note this component is negative, reflecting inwards motion, when $x^3$ is small).  Since $\omega$ is transported by $u$ and is supported on $\Omega(t)$ for all $t \leq T_*$, we thus have
\begin{equation}\label{dope}
n \cdot u(T_*,x_*) \geq \frac{(x^3_*)^2 - \frac{1}{M}}{2r_*} n^r.
\end{equation}

Now we compute the velocity field $u(T_*,x_*)$ at $(T_*,x_*)$.  By \eqref{vort-2}, \eqref{Adef} we have
\begin{equation}\label{uTx}
u(T_*,x_*) \coloneqq \sum_{j=0}^\infty \frac{2^j}{M^2} \delta \alpha_j(x_*) \int_{\R^3} \omega_{12}(T,y) \varphi( 2^j y / M )\ d\operatorname{vol}(y).
\end{equation}
The quantity $\delta \alpha_k(x_*)$ is only non-vanishing when
\begin{equation}\label{stam}
 2^{-k-1} \leq |x_*| \leq 2^{-k+1},
\end{equation}
so we may restrict to $k$ obeying these bounds.  By \eqref{varph}, the function $\varphi(2^k y/M)$ is only non-vanishing when
\begin{equation}\label{stub}
 2^{-k} M \leq y^3 \leq 2^{-k+1} M
\end{equation}
and
$$ r(y) \leq 20 M 2^{-k}$$
where $r(y) \coloneqq \sqrt{(y^1)^2+(y^2)^2}$ is the cylindrically radial component of $y$.
In particular $|y| \leq 30 M$ since $k \geq 0$.  Since $\omega(T_*)$ is supported in $\Omega(T_*)$, we conclude that $\omega_{12}(T,y) \varphi( 2^k y / M )$ is only non-vanishing when
$$  r(y) \leq \sqrt{\frac{1-T_*}{M}+(1+T_*)(y^3)^2}$$
which implies from the triangle inequality that
$$ r(y)   \leq \sqrt{\frac{1-T_*}{M}} + 2 |y^3|.$$
Using \eqref{stub}, \eqref{stam}, \eqref{cone-2} we have
$$ y^3 \geq \frac{1}{2} M |x_*| \geq \frac{1}{2} M \sqrt{\frac{1-T_*}{M}} $$
and hence by \eqref{stub}
$$ r(y)  \leq 3 y^3 \leq 6 M 2^{-k}.$$
Using \eqref{varph}, we then have
$$ \varphi(2^k y/M) = \varphi_3( 2^k y^3 / M )$$
and thus
$$
\int_{\R^3} \omega_{12}(T_*,y) \varphi( 2^k y / M )\ d\operatorname{vol}(y) = 
\int_\R \varphi_3(2^k y^3/M) \left(\int_{r(y) \leq 30M} \omega_{12}(T_*,y)\ dy^1 dy^2\right)\ dy^3.$$
Applying \eqref{circ} and \eqref{ting}, the right-hand side evaluates to $M/2^k$.  From \eqref{uTx} we conclude that
$$
u(T_*,x_*) \coloneqq \frac{1}{M} \sum_{k=0}^\infty \delta \alpha_k(x_*).$$
Suppose first that $|x_*| \leq 1/2$, then $\delta \alpha_k$ vanishes for $k<0$, and we conclude from \eqref{ded} that $u(T_*,x_*)$ is the inward vector field
$$
u(T_*,x) = -\frac{1}{Mr_*} \frac{d}{d r},
$$
and hence
$$ n \cdot u(T_*,x_*) = - \frac{1}{Mr_*} n^r.$$
Since $n^r$ is positive, this contradicts \eqref{dope}.  Thus we must have $1/2 \leq |x_*| \leq 2$, which from \eqref{cone-2} implies that $|x_3|$ is comparable to $1$.  Now we use the boundedness of $\alpha$ and its derivatives on this annulus to obtain the crude bound
$$
u(T_*,x_*) = O(1/M)
$$
$$ n \cdot u(T_*,x_*) = O(1/M).$$
On the other hand, in the region $1/2 \leq |x_*| \leq 2$, one checks from \eqref{omo} that $n^r$ is comparable to $1$, and this again contradicts \eqref{dope} for $M$ large enough.  This concludes the proof of Theorem \ref{first-blow}.

\section{Embedding SQG type equations into Euler type equations}\label{embed-sec}

\subsection{Formal calculations}\label{form}

To motivate our proof of Theorem \ref{second-blow}, we begin with the following observation that embeds solutions of SQG type equations on $\R^2$ into solutions of Euler type equations on $\R^2 \times \R/\Z$; a key feature of this embedding is that the vector potential operator $\tilde A$ on $\R^2 \times \R/\Z$ will always be formally self-adjoint, even when the vector potential operator $A$ on $\R^2$ is not.  In this subsection we ignore issues of regularity or decay in calculations, proceeding instead at a purely formal level.

Let $A\colon B_2(\R^2) \to \Gamma^2(\R^2)$ be a (formal) vector potential operator on $\R^2$.  We can write this operator in coordinates as
$$ A ( \theta dx^1 \wedge dx^2 ) = (A_0 \theta) \frac{d}{d x^1} \wedge \frac{d}{d x^2}$$
for all scalar functions $\theta\colon \R^2 \to \R$, and some linear operator $A_0\colon \Lambda_0(\R^2) \to \Lambda_0(\R^2)$.  Let $\omega, u$ solve the generalised Euler equations with vector potential operator $A$; writing in coordinates
$$ \omega = \theta dx^1 \wedge dx^2$$
and
$$ u = u^1 \frac{d}{d x^1} + u^2 \frac{d}{d x^2}$$
we thus arrive at the active scalar system \eqref{ask}, \eqref{u1d}, \eqref{u2d} from Section \ref{nonself}.
We can formally define the adjoint $A_0^*\colon \Lambda_0(\R^2) \to \Lambda_0(\R^2)$ of $A_0$ by requiring the formal identity
$$ \int_{\R^2} (A_0 \theta)(x) \theta'(x)\ d\operatorname{vol}(x) = \int_{\R^2} \theta(x) (A_0^* \theta')(x)\ d\operatorname{vol}(x)$$
for all $\theta, \theta' \in \Lambda_0(\R^2)$.

We now suppose we have a three-dimensional extension $\tilde A_0\colon \Lambda_0(\R^2 \times \R/\Z) \to \Lambda_0(\R^2 \times \R/\Z)$ of $A_0$, by which we mean a linear operator on $\Lambda_0(\R^2 \times \R/\Z)$ obeying the compatibility condition
\begin{equation}\label{ao}
 \tilde A_0( \theta \circ \pi ) \coloneqq (A_0 \theta) \circ \pi
\end{equation}
for all $\theta \in \Lambda_0(\R^2)$, where $\pi\colon \R^2 \times \R/\Z \to \R^2$ is the projection map $\pi(x^1,x^2,x^3) \coloneqq (x^1,x^2)$.  We also suppose that we have an adjoint operator $\tilde A_0^*\colon \Lambda_0(\R^2 \times \R/\Z) \to \Lambda_0(\R^2 \times \R/\Z)$ which extends $A_0^*$ in the sense that the analogue 
$$
 \tilde A_0^*( \theta \circ \pi ) \coloneqq (A_0^* \theta) \circ \pi
$$
of \eqref{ao} holds for all $\theta \in \Lambda_0(\R^2)$; we also assume that $\tilde A_0^*$ is the adjoint of $\tilde A_0$ in the sense that
\begin{equation}\label{adj}
 \int_{\R^2 \times \R/\Z} (\tilde A_0 \theta)(x) \theta'(x)\ d\operatorname{vol}(x) = \int_{\R^2} \theta(x) (\tilde A_0^* \theta')(x)\ d\operatorname{vol}(x)
\end{equation}
for all $\theta, \theta' \in \Lambda_0(\R^2 \times \R/\Z)$.  One could impose further properties on $\tilde A_0$ and $\tilde A_0^*$, for instance that they are invariant with respect to translations in the $x^3$ direction, but we will not need to do so for this formal calculation.  Heuristically, if $A_0$ (and hence $A_0^*$) are pseudodifferential operators of order $-1$, then we would expect to be able to select extensions $\tilde A_0, \tilde A_0^*$ to also be pseudodifferential operators of order $-1$; again, we will not enforce these requirements during this formal discussion.

We now formally define an operator $\tilde A\colon B_2(\R^2 \times \R/\Z) \to \Gamma^2(\R^2 \times \R/\Z)$ by the formula
\begin{equation}\label{awdef}
\begin{split}
\tilde A \omega & \coloneqq - \tilde A_0 \Delta^{-1} (\partial_1 \omega_{13} + \partial_2 \omega_{23} ) \frac{d}{d x^1} \wedge \frac{d}{d x^2} \\
&\quad + \partial_1 \Delta^{-1} \tilde A_0^* \omega_{12} \frac{d}{d x^1} \wedge \frac{d}{d x^3} \\
&\quad + \partial_2 \Delta^{-1} \tilde A_0^* \omega_{12} \frac{d}{d x^2}\wedge \frac{d}{d x^3} \\
&\quad + \Delta^{-1} \omega_{13} \frac{d}{d x^1} \wedge \frac{d}{d x^3} \\
&\quad + \Delta^{-1} \omega_{23} \frac{d}{d x^2} \wedge \frac{d}{d x^3} \\
&\quad + \Delta^{-1} \omega_{12} \frac{d}{d x^1} \wedge \frac{d}{d x^2}
\end{split}
\end{equation}
whenever $\omega \in B_2(\R^2 \times \R/\Z)$, where $\omega$ is expressed in coordinates as
$$ \omega = \omega_{12} dx^1 \wedge dx^2 + \omega_{13} dx^1 \wedge dx^3 + \omega_{23} dx^2 \wedge dx^3.$$
Here we pause to make a technical remark: because there are only two noncompact dimensions in $\R^2 \times \R/\Z$, the operator $\Delta^{-1}$ is not quite uniquely defined even on $C^\infty_c$ (the symbol $\frac{1}{4\pi |\xi|^2}$ is not absolutely integrable near the origin of the frequency space $\R^2 \times \Z$).  However, the ambiguity is only up to constant functions, which will not be an issue since every appearance of $\Delta^{-1}$ will eventually be combined with at least one spatial derivative.  For sake of concreteness, though, we fix an explicit choice\footnote{In the language of distributions, this corresponds to fixing an explicit interpretation of the symbol $\frac{1}{4\pi |\xi|^2}$ as a tempered distribution, which is well defined up to a constant multiple of the Dirac mass at the origin.} of $\Delta^{-1} \omega$ for $\omega \in C^\infty_c \cap \Lambda_0(\R^2 \times \R/\Z)$ by the formula
$$ \Delta^{-1} \omega(x) = \int_{\R^2 \times \R/\Z} \omega(x') K_1(x-x')\ d\operatorname{vol}(x')$$
where the fundamental solution $K_1(x)$ for $x \neq 0$ can be obtained via descent from the fundamental solution $\frac{1}{4\pi|x|}$ on $\R^3$ by the renormalised summation formula
\begin{equation}\label{kdef}
 K_1(x) \coloneqq \lim_{N \to \infty} \sum_{n=-N}^N \frac{1}{4\pi |\tilde x + (0,0,n)|} - \frac{\log N}{2\pi}
\end{equation}
where $\tilde x$ is an arbitrary lift of $x$ from $\R^2 \times \R/\Z$ to $\R^3$ (it is easy to see that the precise choice of lift is irrelevant).  Roughly speaking, this kernel behaves like $\frac{1}{4\pi |x|}$ when $|x|$ is small and like $\log |x|$ when $|x|$ is large.  Note that the convergence of the sum in \eqref{kdef} improves after taking at least one derivative; for instance, one has the absolutely convergent series representation
\begin{equation}\label{kdef-deriv}
 \nabla K_1(x) = \sum_{n=-\infty}^\infty \frac{-(\tilde x + (0,0,n))}{4\pi |\tilde x  + (0,0,n)|^3}.
\end{equation}

Since the Hodge Laplacian $\Delta$ is diagonalised by the basis $dx^1 \wedge dx^2, dx^1 \wedge dx^3, dx^2 \wedge dx^3$, one could also write the last three terms in \eqref{awdef} more compactly as $\tilde \eta^{-1} \Delta^{-1} \omega$, as per \eqref{asper}.  Observe that if $\tilde A_0$ is a pseudodifferential operator of order $-1$, then $\tilde A$ will be a pseudodifferential operator of order $-2$ (formally, at least); similarly, if $A_0$ and $A^*_0$ commute with translations in the $x^3$ direction, then so does $\tilde A$.

From definition and integration by parts it is clear that $\tilde A$ is formally self-adjoint in the sense of \eqref{fsa}.  Next, we introduce the $2$-form $\omega \in \Lambda_2(\R^2 \times \R/\Z)$ and the vector field $\tilde u \in \Gamma^1(\R^2 \times \R/\Z)$ at any given time by the formulae
\begin{align}
\omega &\coloneqq d(\tilde \theta dx^3) \nonumber \\
&= (\partial_1 \tilde \theta) dx^1 \wedge dx^3 + (\partial_2 \tilde \theta) dx^2 \wedge dx^3 \label{omega-def}\\
\tilde u &\coloneqq \tilde u^1 \frac{d}{d x^1} + \tilde u^2 \frac{d}{d x^2} - \tilde \theta \frac{d}{d x^3}.\label{u-def}
\end{align}
where $\tilde \theta \coloneqq \theta \circ \pi$, $\tilde u^1 \coloneqq u^1 \circ \pi$, $\tilde u^2 \coloneqq u^2 \circ \pi$ are the lifts of $\theta, u^1,u^2$ from $\R^2$ to $\R^2 \times \R/\Z$.  It is clear that $\omega$ is closed, and thus lies in $B^2(\R^2 \times \R/\Z)$.

We now claim

\begin{proposition}  $\omega$ and $\tilde u$ (formally) obey the generalised Euler equations \eqref{vort-1}, \eqref{vort-2} on $\R^2 \times \R/\Z$ with vector potential operator $\tilde A$.
\end{proposition}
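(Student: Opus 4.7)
The plan is to verify \eqref{vort-1} and \eqref{vort-2} for $(\omega,\tilde u)$ by direct componentwise computation, using three ingredients: (a) the active scalar system \eqref{ask}--\eqref{u2d} satisfied by $(\theta,u^1,u^2)$ on $\R^2$; (b) the compatibility \eqref{ao} for $\tilde A_0$ and its adjoint counterpart for $\tilde A_0^*$; and (c) the fact that any lift $f \circ \pi$ is $x^3$-independent, so that on such functions the Hodge Laplacian reduces as $\Delta(f \circ \pi) = -(\partial_1^2+\partial_2^2)(f \circ \pi)$, and in particular $\Delta^{-1}$ essentially commutes with the lift, up to the kernel of $\Delta$, which will be killed by the surrounding derivatives.

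First I would establish the Biot-Savart identity \eqref{vort-2}. Since $\omega = d(\tilde\theta\,dx^3)$ has $\omega_{12}=0$, $\omega_{13}=\partial_1\tilde\theta$, $\omega_{23}=\partial_2\tilde\theta$, the second, third, and sixth terms in \eqref{awdef} drop out immediately. The first term simplifies via $\partial_1\omega_{13}+\partial_2\omega_{23} = (\partial_1^2+\partial_2^2)\tilde\theta = -\Delta\tilde\theta$, so applying $\Delta^{-1}$ and then $\tilde A_0$ and using \eqref{ao} produces the $\frac{d}{dx^1}\wedge\frac{d}{dx^2}$-coefficient of $\tilde A\omega$ as $\tilde A_0 \tilde\theta = (A_0\theta)\circ \pi$. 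The fourth and fifth terms contribute coefficients $\Delta^{-1}\partial_1\tilde\theta$ and $\Delta^{-1}\partial_2\tilde\theta$, which are themselves $x^3$-independent. Taking $\delta$ via $(\delta\alpha)^j = -\partial_i\alpha^{ij}$, the $\partial_3$-derivatives annihilate these $x^3$-independent coefficients, so the first two components of $\delta\tilde A\omega$ collapse to $\partial_2(A_0\theta)\circ\pi = \tilde u^1$ and $-\partial_1(A_0\theta)\circ\pi = \tilde u^2$ via \eqref{u1d}--\eqref{u2d}, while the third component reduces to $-\Delta^{-1}(\partial_1^2+\partial_2^2)\tilde\theta$, which by the $x^3$-independence of $\tilde\theta$ recovers the prescribed value of $\tilde u^3$ from \eqref{u-def}.

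For the vorticity transport equation \eqref{vort-1}, I would use the commutation \eqref{dcom} together with the Leibniz rule \eqref{leib1} to write
\[ \mathcal{L}_{\tilde u}\omega = d\,\mathcal{L}_{\tilde u}(\tilde\theta\,dx^3) = d\bigl[(\mathcal{L}_{\tilde u}\tilde\theta)\,dx^3 + \tilde\theta\,d\tilde u^3\bigr], \]
noting that $\mathcal{L}_{\tilde u}dx^3 = d(\iota_{\tilde u}dx^3) = d\tilde u^3$ by Cartan's formula \eqref{cartan}. Since $\tilde\theta$ is $x^3$-independent, $\mathcal{L}_{\tilde u}\tilde\theta = \tilde u^1\partial_1\tilde\theta + \tilde u^2\partial_2\tilde\theta = (u^j\partial_j\theta)\circ \pi = -\partial_t\tilde\theta$ by \eqref{ask}; and by \eqref{leib2}, $d(\tilde\theta\,d\tilde u^3) = d\tilde\theta\wedge d\tilde u^3$, which vanishes because $\tilde u^3 = -\tilde\theta$ forces $d\tilde u^3$ to be a scalar multiple of $d\tilde\theta$. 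Therefore $\mathcal{L}_{\tilde u}\omega = -\partial_t(d\tilde\theta \wedge dx^3) = -\partial_t\omega$, which is \eqref{vort-1}.

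The only genuinely delicate point is the formal meaning of $\Delta^{-1}$ applied to $x^3$-independent functions, which effectively lie in a two-noncompact-dimensional situation where $\Delta^{-1}$ is defined only up to constants; but every such $\Delta^{-1}$ in the computation is composed with at least one derivative from $\{\partial_1,\partial_2,\delta\}$, so the ambiguity never survives. Everything else is routine bookkeeping with the Cartan formula, the Leibniz rules \eqref{leib1}--\eqref{leib2}, and the identity $df \wedge df = 0$.
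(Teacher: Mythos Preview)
Your proposal is correct and follows essentially the same route as the paper's own proof: you verify \eqref{vort-2} by computing $\tilde A\omega$ componentwise from \eqref{awdef} using $\omega_{12}=0$, $\partial_1\omega_{13}+\partial_2\omega_{23}=-\Delta\tilde\theta$, and \eqref{ao}, then take $\delta$; and you verify \eqref{vort-1} via \eqref{dcom}, the Leibniz rule, Cartan's formula giving $\mathcal L_{\tilde u}dx^3 = d\tilde u^3 = -d\tilde\theta$, the vanishing $d\tilde\theta\wedge d\tilde\theta=0$, and the transported scalar equation \eqref{ask}. Your remark about the harmless constant ambiguity in $\Delta^{-1}$ matches the paper's own caveat on this point.
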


\begin{proof}  We begin with \eqref{vort-1}.  By \eqref{awdef}, \eqref{omega-def} we have
\begin{align*}
\tilde A \omega &= - \tilde A_0 \Delta^{-1} (\partial_1 \partial_1 \tilde \theta + \partial_2 \partial_2 \tilde\theta )  \frac{d}{d x^1} \wedge \frac{d}{d x^2} \\
&\quad + \Delta^{-1} \partial_1 \tilde \theta \frac{d}{d x^1} \wedge \frac{d}{d x^3} \\
&\quad + \Delta^{-1} \partial_2 \tilde \theta \frac{d}{d x^2} \wedge \frac{d}{d x^3}.
\end{align*}
But since $\tilde \theta  = \theta \circ \pi$ is constant in the $x^3$ direction, we have from definition of the Hodge Laplacian that
\begin{equation}\label{hodge}
 \partial_1 \partial_1 \theta + \partial_2 \partial_2 \tilde \theta = - \Delta \tilde \theta.
\end{equation}
Taking divergences, and again noting that $\tilde \theta$ is constant in the $x^3$ direction, we have
\begin{align*}
\delta \tilde A \omega &= - \partial_1 \tilde A_0 \tilde \theta \frac{d}{d x^2} + \partial_2 \tilde A_0 \tilde \theta \frac{d}{d x^1} \\
&\quad + \partial_1 \Delta^{-1} \partial_1 \tilde \theta  \frac{d}{d x^3} \\
&\quad + \partial_2 \Delta^{-1} \partial_2 \tilde \theta  \frac{d}{d x^3}.
\end{align*}
From \eqref{u1d}, \eqref{u2d}, \eqref{ao} one has
$$ \tilde u^1 = \partial_2 \tilde A_0 \tilde \theta; \quad \tilde u^2 = - \partial_1 \tilde A_0 \tilde \theta; $$
inserting this and \eqref{hodge} into the above computation, we obtain \eqref{vort-2}.

Now we turn to \eqref{vort-1}.  From \eqref{omega-def} we have
$$ \omega = d\tilde \theta \wedge dx^3 $$
and hence by \eqref{dcom} and \eqref{leib1}, we have
$$ \partial_t \omega + {\mathcal L}_{\tilde u} \omega = d ( \partial_t \tilde \theta + {\mathcal L}_{\tilde u} \tilde \theta ) \wedge dx^3 - d\tilde \theta \wedge d({\mathcal L}_{\tilde u} x^3).$$
From \eqref{u-def} we have
$$ {\mathcal L}_{\tilde u} x^3 = - \tilde \theta $$
and hence
$$  d\tilde \theta \wedge d({\mathcal L}_{\tilde u} x^3) = - d\tilde  \theta \wedge d\tilde  \theta = 0.$$
Next, since $\tilde u = u \circ \pi - \tilde \theta \frac{d}{d x^3}$ and $\tilde \theta$ is constant in the $x^3$ variable, we have
$$  {\mathcal L}_{\tilde u} \theta = ( {\mathcal L}_{u} \theta) \circ \pi$$
and hence by \eqref{ask}
$$ \partial_t \tilde \theta + {\mathcal L}_{\tilde u} \tilde \theta = 0.$$
The claim \eqref{vort-1} follows.
\end{proof}

\subsection{Rigorous construction}

We now prove Theorem \ref{second-blow} rigorously.  Set $\M = \R^2 \times \R/\Z$, and let $\eps > 0$.  
Let $M > 1$ be sufficiently large depending on $\eps$.  Let $A_0\colon \Lambda_0(\R^2) \to \Lambda_0(\R^2)$ be the linear operator defined in 
\eqref{stack}, thus
$$
 A_0(\theta)(x^1,x^2) \coloneqq\frac{2}{M} \sum_{k=0}^\infty 2^{2k} x^1 \eta(2^k x^2) \psi( 2^k x^1 ) \int_{\R^2} \theta(y) \varphi(2^k y)\ d\operatorname{vol}(y).
$$
We then define the extension $\tilde A_0\colon \Lambda_0(\R^2 \times \R/\Z) \to \Lambda_0(\R^2 \times \R/\Z)$ by the formula
\begin{equation}\label{tao}
\tilde A_0(\theta)(x^1,x^2,x^3) \coloneqq\frac{2}{M} \sum_{k=0}^\infty 2^{2k} x^1 \eta(2^k x^2) \psi( 2^k x^1 ) \int_\R \int_{\R^2} \theta(y, x^3 + 2^{-k} z) \varphi(2^k y)\ d\operatorname{vol}(y) \kappa(z)\ dz
\end{equation}
where $\kappa\colon \R \to \R$ is a smooth function supported on $[-1/2,1/2]$ with $\int_\R \kappa(z)\ dz = 1$.  The purpose of the additional averaging in the $z$ variable is so that $\tilde A_0$ obeys the kernel estimates \eqref{nij} in the definition of a reasonable operator.

It is easy to see that the sum defining $\tilde A_0(\theta)$ is absolutely convergent for $\theta$ in $C^\infty_c \cap \Lambda_0(\R^2 \times \R/\Z)$; indeed, the summands have size $O_\theta( 2^{-j} )$.  It is also easy to verify the relation \eqref{ao} with $\theta \in C^\infty_c \cap \Lambda_0(\R^2)$.  The adjoint map $\tilde A_0^*\colon  \Lambda_0(\R^2 \times \R/\Z) \to \Lambda_0(\R^2 \times \R/\Z)$ is given by the formula
$$ 
\tilde A_0^*(\theta)(y^1,y^2,y^3) \coloneqq \frac{2}{M} \sum_{k=0}^\infty 2^{2k} \varphi(2^k y^1, 2^k y^2) \int_\R \int_{\R^2} x^1 \eta(2^k x^2) \psi( 2^k x^1 ) \theta(x, y^3 - 2^{-k} z)\ dx \kappa(z)\ dz;
$$
again, one can check that the sum defining $\tilde A_0^*$ is absolutely convergent for $\theta \in C^\infty_c \cap \Lambda_0(\R^2 \times \R/\Z)$, that $\tilde A_0^*$ is the adjoint of $\tilde A_0$ in the sense of \eqref{adj}, and that $\tilde A_0^*$ extends $A_0^*$.  Finally it is clear from construction that $\tilde A_0$ and $\tilde A_0^*$ are both invariant with respect to translations in the $x^3$ direction.

Now we establish

\begin{proposition}\label{pass-ok} $\tilde A$ is a $100$-reasonable vector potential operator.
\end{proposition}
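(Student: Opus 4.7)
The plan is to verify directly the two defining properties of an $M$-reasonable vector potential operator given in Definition \ref{reason-def}, namely the pointwise kernel bound \eqref{nij} and the Sobolev bound \eqref{pass}, for each of the six summands in \eqref{awdef}. The last three summands together comprise the true Biot--Savart operator $\tilde\eta^{-1}\Delta^{-1}$ on $\M=\R^2\times\R/\Z$; for this piece the estimates follow from standard Calder\'on--Zygmund theory applied to the fundamental solution $K_1$ from \eqref{kdef}, noting from \eqref{kdef-deriv} that $|\nabla^{i+j}K_1(x-y)|\lesssim\max(|x-y|^{-i-j-1},|x-y|^{-i-j})$ for $i+j\geq 1$, which matches the right-hand side of \eqref{nij} exactly when $d=3$, $m=2$. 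The associated Sobolev bound is just the standard fact that $\nabla^2\tilde\eta^{-1}\Delta^{-1}$ is a zeroth-order Fourier multiplier operator.

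The real work is for the first three summands, which have the schematic form $\tilde A_0\Delta^{-1}\partial_j$ or $\partial_j\Delta^{-1}\tilde A_0^*$ applied to individual components of $\omega$. I would write each such operator as an absolutely convergent sum over $k\geq 0$, using \eqref{tao} term-by-term and composing with the Riesz-type factor $\partial_j\Delta^{-1}$. The $k$-th piece has an integral kernel localised at spatial scale $2^{-k}$ in both variables; the smoothness and compact support of $\psi,\eta,\kappa,\varphi$, together with the moment vanishing of $\varphi$ (cf.~\eqref{12mom}) and the pointwise decay of $\nabla\Delta^{-1}$ afforded by \eqref{kdef-deriv}, yield off-diagonal bounds of the form $|\nabla^i_x\nabla^j_y K^{(k)}(x,y)|\lesssim_{M,N}2^{k(i+j+1)}(1+2^k|x-y|)^{-N}$ for any $N$, which sum over $k\geq 0$ to give \eqref{nij}.

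For the $H^k$ bound \eqref{pass}, the approach is essentially the Littlewood--Paley / Schur-test argument already carried out for the analogous operator $A$ in Section \ref{3d-nonself}. By duality and Littlewood--Paley decomposition it suffices to show that for $\omega,\beta\in C^\infty_c\cap\Lambda_2(\M)$ with Fourier transforms supported in annuli $\{|\xi|\sim N_1\}$ and $\{|\eta|\sim N_2\}$,
\begin{equation*}
\left|\int_{\M}\langle\beta,\tilde A\omega\rangle\,d\operatorname{vol}\right|\lesssim_M\min\!\left(N_1^{-2},\,N_1^{200}N_2^{-202}\right)\|\omega\|_{L^2}\|\beta\|_{L^2}.
\end{equation*}
Expanding the first three summands into the $k$-sum from \eqref{tao} and applying Parseval together with the moment vanishing and smoothness of the cutoffs gives, for each $k$, a contribution bounded by $\min(N_i/2^k,2^k/N_i)^{300}$ times appropriate scaling factors, which sums geometrically in $k$; this, combined with Schur's test over $(N_1,N_2)$, yields the claim.

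The main technical nuisance I anticipate is dealing with the partially periodic setting $\M=\R^2\times\R/\Z$ with only $m=2$ non-compact directions: the fundamental solution $K_1$ in \eqref{kdef} grows logarithmically at infinity rather than decaying, so every appearance of $\Delta^{-1}$ has to be either differentiated or composed with a factor that is localised in the $(x^1,x^2)$-directions in order to produce a convergent contribution. Inspection of \eqref{awdef} shows this is the case: each $\Delta^{-1}$ in the first three terms is flanked by a spatial derivative, while the last three terms are reduced to the standard $\nabla^2\tilde\eta^{-1}\Delta^{-1}$ multiplier after differentiating as needed. With this care taken, no logarithmic divergence arises, and the dyadic estimates assemble in the same way as in the $\R^3$ construction of Section \ref{3d-nonself}.
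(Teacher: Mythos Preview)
Your proposal is correct and follows essentially the same strategy as the paper: treat the Biot--Savart piece $\tilde\eta^{-1}\Delta^{-1}$ via the fundamental solution $K_1$, and treat the $\tilde A_0\Delta^{-1}\partial_l$ pieces by the dyadic $k$-sum together with the moment vanishing of $\varphi$ for off-diagonal decay and a Schur/Littlewood--Paley argument for the Sobolev bound. The paper's execution differs only tactically: for \eqref{nij} it lifts the kernel computation to $\R^3$ by descent (summing over cosets of $\{0\}\times\{0\}\times\Z$) and splits explicitly into near-field and far-field in $|x-y|$ versus $2^{-k}$, while for \eqref{pass} it takes Fourier coefficients in the $\R/\Z$ variable to reduce to a one-parameter family of two-dimensional bounds $\|(E+\Delta)^{s/2}A_0\nabla(E+\Delta)^{-s/2}\|_{L^2(\R^2)\to L^2(\R^2)}\lesssim 1/M$ rather than running the three-dimensional Littlewood--Paley argument directly. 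One small imprecision: your bound $(1+2^k|x-y|)^{-N}$ holds only for $N$ up to the moment order of $\varphi$ (here $\sim 1000$), not for arbitrary $N$, and in the near-field regime $|x-y|\lesssim 2^{-k}$ it requires an integration by parts (moving $\nabla_y$ onto $R_k$ via $\nabla_y L(w-y)=-\nabla_w L(w-y)$) to avoid the non-integrable singularity of $\nabla L$; both points are implicit in the paper's case split.
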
 

\begin{proof}
We first prove \eqref{pass}.  As in the previous section, it will suffice to establish the slightly stronger bounds
$$
\| \tilde A \omega \|_{\dot H^{k+2}(\M)} \lesssim \| \omega \|_{\dot H^k(\M)}
$$
for all $0 \leq k \leq 100$.

The claim is clear for the last three components of \eqref{awdef}, so we focus on the first three components.  By duality (and commuting $\Delta^{-1}$ with $\partial_1, \partial_2$) it will suffice to show that
\begin{equation}\label{traq}
\| \tilde A_0 \partial_l \omega \|_{\dot H^{s}(\R^2 \times \R/\Z)} \lesssim \| \omega \|_{\dot H^s(\R^2 \times \R/\Z)}
\end{equation}
for all $-102 \leq s \leq 102$, $l=1,2$, and $\omega \in \dot H^s(\M)$ (dropping the requirement that $\omega$ be divergence-free).
For future reference we note that we will in fact gain an extra factor of $1/M$, and show that
\begin{equation}\label{traq-2}
\| \tilde A_0 \partial_l \omega \|_{\dot H^{s}(\R^2 \times \R/\Z)} \lesssim \frac{1}{M} \| \omega \|_{\dot H^s(\R^2 \times \R/\Z)}.
\end{equation}
Unwinding the definition of the Sobolev norms, it suffices to show that
$$ \| \Delta^{s/2} \tilde A_0 \partial_l \Delta^{-s/2} f \|_{L^2(\R^2 \times \R/\Z)} \lesssim \frac{1}{M} \| f \|_{L^2(\R^2 \times \R/\Z)}$$
for all $f \in L^2(\R^2 \times \R/\Z)$.  By Minkowski's inequality and translation invariance, it suffices to prove this with $\tilde A_0$ replaced by the variant operator $\tilde A'_0$ defined by
$$
\tilde A'_0(\theta)(x^1,x^2,x^3) \coloneqq\frac{2}{M} \sum_{k=0}^\infty 2^{2k} x^1 \gamma(2^k x^2) \psi( 2^k x^1 ) \int_{\R^2} \theta(y, x^3) \varphi(2^k y)\ d\operatorname{vol}(y)
$$
thus $\tilde A'_0$ simply applies the operator $A_0$ on each $x^3$ slice of $\R^2 \times \R/\Z$.
Taking Fourier coefficients in the $\R/\Z$ coordinate (noting that $A'_0$ and $\partial_i$ commute with this operation), it suffices to show the two-dimensional estimate
$$ \| (E + \Delta)^{s/2} A_0 \nabla (E+\Delta)^{-s/2} f \|_{L^2(\R^2)} \lesssim \frac{1}{M} \| f \|_{L^2(\R^2)}$$
for all $f \in L^2(\R^2)$ and $E \geq 0$, where $\Delta$ now denotes the Hodge Laplacian on $\R^2$ rather than $\R^2 \times \R/\Z$.

Fix $E \geq 0$.  By duality, it suffices to establish the bound
$$ |\langle A_0 \nabla (E+\Delta)^{-s/2} f , (E + \Delta)^{s/2} g \rangle| \lesssim \frac{1}{M} \| f \|_{L^2(\R^2)} \| g \|_{L^2(\R^2)}
$$
for $f,g \in L^2(\R^2)$.  By \eqref{stack} and integration by parts, the left-hand side is $-\frac{1}{M} \sum_{k=0}^\infty X_k Y_k$, where
$$
X_k \coloneqq 2^k \int_{\R^2} (E+\Delta)^{-s/2} f(y) (\nabla \varphi)(2^k y)\ d\operatorname{vol}(y)
$$
and
$$ Y_k \coloneqq 2^{k} \int_{\R^2} (E+\Delta)^{s/2} g(x)  2^k x^1 \gamma(2^k x^2) \psi( 2^k x^1 )\ d\operatorname{vol}(x).$$
The functions $y \mapsto \nabla \varphi(y)$ and $(x^1,x^2) \mapsto x^1 \gamma(x^2) \psi(x^1)$ are smooth and compactly supported, and orthogonal to all polynomials of degree up to $1000$, thus their Fourier transforms are Schwartz functions that vanish to order $1000$ at the origin.  From this, Plancherel's theorem, and Cauchy-Schwarz, we see that
$$
X_k \lesssim \sum_N (E + N^2)^{-s/2} \min( N/2^k, 2^k/N )^{1000} \| P_N f \|_{L^2(\R^2)}$$
and
$$
Y_k \lesssim \sum_M (E + M^2)^{-s/2} \min( M/2^k, 2^k/M )^{1000} \| P_M g \|_{L^2(\R^2)}$$
where $N,M$ range over the dyadic numbers $2^n, n \in \Z$, and $P_N$ denotes the Fourier projection to frequencies $N \leq |\xi| \leq 2N$.  Multiplying and summing in $k$ and using the hypothesis $|k| \leq 102$, we conclude that
$$ \sum_{k=0}^\infty X_k Y_k \lesssim \sum_N \sum_M \min( M/N, N/M )^{100} \| P_N f \|_{L^2(\R^2)} \| P_M g \|_{L^2(\R^2)}$$
and the claim now follows from Schur's test and Plancherel's theorem.

Now we prove \eqref{nij}.  We need to show that the integral kernel of $\tilde A$ obeys the bounds
$$
 |\nabla^i_x \nabla^j_y K(x,y)| \lesssim \max( |x-y|^{-i-j-1}, |x-y|^{-i-j} )
$$
for $0 \leq i,j \leq 100$ with $i+j \geq 1$.  The contribution of the last three components of $\tilde A$ in \eqref{awdef} are acceptable after differentiating \eqref{kdef} as in \eqref{kdef-deriv} (note here that it is important that $i+j \geq 1$).  It remains to control the kernel of the first three components.  This kernel on $\R^2 \times \R/\Z$ (and its derivatives) can be obtained by descent from the kernel of the corresponding operator on $\R^3$ (and its derivatives) by summing over cosets of $\{0\} \times \{0\} \times \Z$ as in \eqref{kdef}, \eqref{kdef-deriv}. Thus, if we let $\tilde K$ denote the kernel of the first three components of $\tilde A$ on $\R^3$, it will suffice to show that
$$
 |\nabla^i_x \nabla^j_y \tilde K(x,y)| \lesssim_{i,j} |x-y|^{-i-j-1}
$$
for $0 \leq i,j \leq 100$ with $i+j \geq 1$; the condition $i+j \geq 1$ is needed to ensure a convergent sum over the coset of $\{0\} \times \{0\} \times \Z$, but will not otherwise be needed henceforth.

By linearity and taking adjoints, it thus suffices to verify the above bound for the integral kernel of $\tilde A_0 \Delta^{-1} \partial_l$ on $\R^3$ for $l=1,2$.

From the Newton formula
$$ \Delta^{-1} f(w) = \frac{1}{4\pi} \int_{\R^3} \frac{f(y)}{|w-y|}\ d\operatorname{vol}(y)$$
on $\R^3$, we see that the kernel $\Delta^{-1} \partial_l$ is given by $L(w-y)$, where
$$ L(x) \coloneqq \frac{-1}{4\pi} \frac{x^l}{|x|^3};$$
also, from \eqref{tao}, the kernel $R(x,w)$ of $\tilde A_0$ is given by
$$ R(x,w) = \sum_{k=0}^\infty R_k(x,w)$$
where
$$ R_k(x,w) \coloneqq \frac{2}{M} 2^{3k} x^1 \gamma(2^k x^2) \psi(2^k x^1) \varphi(2^k (w^1,w^2) ) \kappa( 2^k (x^3-w^3) ).$$
Thus it will suffice to show that
\begin{equation}\label{lh}
 |\sum_{k=0}^\infty \nabla^i_x \nabla^j_y \int_{\R^3} R_k(x,w) L(w-y)\ d\operatorname{vol}(w)| \lesssim |x-y|^{-i-j-1}
\end{equation}
for $0 \leq i,j \leq 100$.

From the construction of $\gamma,\psi,\varphi,\kappa$ we see that $R_k(x,w)$ is supported on the region $|x-w| \leq 100 \times 2^{-k}$ and obeys the derivative bounds
\begin{equation}\label{rik}
 |\nabla^i_x \nabla^j_w R_k(x,w)| \lesssim 2^{(i+j+2)k}
\end{equation}
for $0 \leq i,j \leq 100$.  Also, from the moment conditions on $\varphi$ we see that for any $x \in \R^3$, the function $w \mapsto R_k(x,w)$ is orthogonal to any polynomial of degree at most $1000$.

Let us first consider the contribution to the left-hand side of \eqref{lh} of those $k$ for which
\begin{equation}\label{dink}
 |x-y| \geq 200 \times 2^{-k}.
\end{equation}
Then we have $|w-y| \gtrsim |x-y|$, and hence $|\nabla^m_y L(w-y)| \lesssim |x-y|^{-2-m}$ for any $0 \leq m \leq 1000$.  For each fixed $x \in \R^3$, and for $w$ in the support of $R_k(x,w)$, one can then use Taylor expansion to write $\nabla^j L(w-y)$ as a polynomial of degree at most $1000$, plus an error of size at most $O( (2^{-k}/|x-y|)^{500} |x-y|^{-2-j} )$ (say).  Using \eqref{rik} (with $j$ replaced by $0$), and the support of $R_k$, we conclude that
$$ |\nabla^i_x \nabla^j_y \int_{\R^3} R_k(x,w) L(w-y)\ dw| \lesssim 2^{(i+2)k} \times 2^{-3k} \times (2^{-k}/|x-y|)^{500} |x-y|^{-2-j}.$$
Summing over all $k$ obeying \eqref{dink}, we see that this contribution to the left-hand side of \eqref{lh} is acceptable.

It remains to treat the contribution of those $k$ for which \eqref{dink} fails.  In this case we integrate by parts to obtain the identity
$$ |\nabla^i_x \nabla^j_y \int_{\R^3} R_k(x,w) L(w-y)\ d\operatorname{vol}(w)|  = | \int_{\R^3} \nabla^i_x \nabla^j_w R_k(x,w) L(w-y)\ d\operatorname{vol}(w)|.$$
Applying \eqref{rik} and the support of $R_k$, we conclude that
$$ |\nabla^i_x \nabla^j_y \int_{\R^3} R_k(x,w) L(w-y)\ d\operatorname{vol}(w)| \lesssim 2^{(i+j+2)k} \int_{|w-x| \leq 100 \times 2^{-k}} |L(w-y)|\ d\operatorname{vol}(w).$$
Since \eqref{dink} fails, the condition $|w-x| \leq 100 \times 2^{-k}$ implies that $|w-y| \lesssim 2^{-k}$, and hence by the bound $|L(w-y)| \lesssim |w-y|^{-2}$, we have
$$ 2^{(i+j+2)k} \int_{|w-x| \leq 100 \times 2^{-k}} |L(w-y)|\ d\operatorname{vol}(w) \lesssim 2^{(i+j+1) k}.$$
Summing over all $k$ for which \eqref{dink} fails, we see that this contribution to \eqref{lh} is also acceptable.
\end{proof}

Next, we establish positive definiteness.

\begin{proposition}  For any $\omega \in C^\infty_c \cap B_2(\M)$, we hqave
\begin{equation}\label{Sam}
\int_{\M} \langle \omega, \tilde A \omega \rangle\ d \operatorname{vol} = \left(1 + O\left(\frac{1}{M}\right)\right) \| \omega \|_{\dot H^{-1}(\M)}^2.
\end{equation}
\end{proposition}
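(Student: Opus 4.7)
My plan is to split $\tilde A$ into its ``true Euler'' part $B \coloneqq \tilde \eta^{-1}\Delta^{-1}$ (the last three terms of \eqref{awdef}) and the correction $C \coloneqq \tilde A - B$ (the first three terms), evaluate $\int_{\M}\langle\omega,B\omega\rangle\,d\operatorname{vol}$ exactly to recover the main term $\|\omega\|_{\dot H^{-1}}^2$, and show that $\int_{\M}\langle\omega,C\omega\rangle\,d\operatorname{vol}$ is $O(1/M)\|\omega\|_{\dot H^{-1}}^2$ using the improved $1/M$ gain in \eqref{traq-2}.

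For the main term, writing $\omega = \omega_{12}\,dx^1\wedge dx^2 + \omega_{13}\,dx^1\wedge dx^3 + \omega_{23}\,dx^2\wedge dx^3$, the Euclidean norm on $2$-forms gives $|\hat\omega(\xi)|^2 = \sum_{i<j}|\hat\omega_{ij}(\xi)|^2$, and the componentwise pairing simplifies to $\langle \omega, B\omega\rangle = \sum_{i<j}\omega_{ij}\Delta^{-1}\omega_{ij}$. Plancherel's theorem (and the fact that $\Delta^{-1}$ has Fourier multiplier $(2\pi|\xi|)^{-2}$) then yields
\[
\int_{\M}\langle\omega,B\omega\rangle\,d\operatorname{vol} \;=\; \sum_{i<j}\|\omega_{ij}\|_{\dot H^{-1}}^2 \;=\; \|\omega\|_{\dot H^{-1}}^2.
\]

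For the correction I expand $\langle \omega, C\omega\rangle = \omega_{12}C^{12} + \omega_{13}C^{13} + \omega_{23}C^{23}$, where $C^{12} = -\tilde A_0\Delta^{-1}(\partial_1\omega_{13}+\partial_2\omega_{23})$, $C^{13} = \partial_1\Delta^{-1}\tilde A_0^*\omega_{12}$, and $C^{23} = \partial_2\Delta^{-1}\tilde A_0^*\omega_{12}$. Integrating the $C^{13}$ and $C^{23}$ contributions by parts to shift $\partial_l$ onto $\omega_{l3}$, then using the self-adjointness of $\Delta^{-1}$ and the adjoint relation \eqref{adj}, all three terms collapse into the single scalar expression
\[
\int_{\M}\langle\omega,C\omega\rangle\,d\operatorname{vol} \;=\; -2\int_{\M}\omega_{12}\,\tilde A_0\Delta^{-1}\bigl(\partial_1\omega_{13}+\partial_2\omega_{23}\bigr)\,d\operatorname{vol}.
\]
I then apply the duality inequality $\bigl|\int_{\M} fg\,d\operatorname{vol}\bigr|\le \|f\|_{\dot H^{-1}}\|g\|_{\dot H^{1}}$ with $f = \omega_{12}$ and $g$ the remaining factor. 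Writing $\tilde A_0\Delta^{-1}\partial_l\omega_{l3} = (\tilde A_0\partial_l)(\Delta^{-1}\omega_{l3})$ and invoking \eqref{traq-2} with $s=1$ gives
\[
\|\tilde A_0\partial_l\Delta^{-1}\omega_{l3}\|_{\dot H^1} \;\lesssim\; \frac{1}{M}\|\Delta^{-1}\omega_{l3}\|_{\dot H^1} \;=\; \frac{1}{M}\|\omega_{l3}\|_{\dot H^{-1}}
\]
for $l=1,2$; summing in $l$ then yields $\bigl|\int_{\M}\langle\omega,C\omega\rangle\,d\operatorname{vol}\bigr| \lesssim \frac{1}{M}\|\omega\|_{\dot H^{-1}}^2$, which combined with the main-term evaluation produces \eqref{Sam}.

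The only delicate step will be the sign bookkeeping in the integration-by-parts/adjoint manipulation: one must verify that the three components of $\langle\omega, C\omega\rangle$ reinforce (rather than partially cancel) into the single scalar expression above, and the construction \eqref{awdef} was designed precisely so that this happens. The $1/M$ gain itself is essentially free, coming from the $2/M$ prefactor in the definitions \eqref{stack} and \eqref{tao} of $A_0$ and $\tilde A_0$, already exploited in the proof of Proposition \ref{pass-ok}.
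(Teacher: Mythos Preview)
Your proof is correct and follows essentially the same approach as the paper: evaluate the $\tilde\eta^{-1}\Delta^{-1}$ part exactly via Plancherel, then bound the three correction terms using the $1/M$ gain in \eqref{traq-2}. The only cosmetic difference is that you first collapse the three correction terms into a single expression via integration by parts and the adjoint relation before invoking \eqref{traq-2} at $s=1$, whereas the paper bounds them separately by reducing to the operator bounds $\|\tilde A_0 u\|_{\dot H^1}\lesssim\frac{1}{M}\|u\|_{L^2}$ and its dual; both routes are driven by the same estimate.
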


\begin{proof}  From Plancherel's theorem, the contribution of the last three terms of \eqref{awdef} to the left-hand side of \eqref{Sam} is precisely $\| \omega \|_{\dot H^{-1}(\M)}^2$.  By the Cauchy-Schwarz inequality and the triangle inequality, it thus suffices to establish the bounds
$$ \| \tilde A_0 u \|_{\dot H^1(\M)} \lesssim \frac{1}{M} \| u \|_{L^2(\M)}$$
and
$$ \| \tilde A^*_0 v \|_{L^2(\M)} \lesssim \frac{1}{M} \| v \|_{\dot H^{-1}(\M)}$$
for $u \in L^2(\M)$ and $v \in \dot H^{-1}(\M)$.  But this follows from \eqref{traq-2} and duality.
\end{proof}

Let $\theta_0:\R^2 \to \R$ be initial data of the type in Proposition \ref{sqgb}, and let $\tilde \theta_0\colon \M \to \R$ be the lift of $\theta_0$ to $\M$ defined by $\tilde \theta_0 \coloneqq \theta_0 \circ \pi$.  Following \eqref{omega-def}, we define the initial data $\omega_0 \in C^\infty_c \cap B_2(\M)$ by the formula
\begin{equation}\label{omeg0-def}
\omega_0 \coloneqq (\partial_1 \tilde \theta_0) dx^1 \wedge dx^3 + (\partial_2 \tilde \theta_0) dx^2 \wedge dx^3.
\end{equation}
We now claim (for $M$ sufficiently large) that Theorem \ref{second-blow} holds with this choice of initial data $\omega_0$ and with the operator $\tilde A$ constructed above as vector potential operator.  We have already verified that $\tilde A$ is $100$-reasonable, formally self-adjoint, and obeys \eqref{posdef} (if $M$ is sufficiently large depending on $\eps$).  Thus, the only way that Theorem \ref{second-blow} can still fail is if there is a solution $\omega \in X^{10,2}$, $u \in Y^{10,2}$ to the generalised Euler equations with vector potential operator $\tilde A$ and initial vorticity $\omega_0$ on the time interval $[0,1]$.

Suppose for contradiction that this is the case.
Obseve that $\omega_0$ is invariant with respect to translations in the $x^3$ direction, and that $\tilde A$ commutes with these translations.  Thus, if $\omega,u$ solve the generalised Euler equations with initial data $\omega_0$, then so do any translates of $\omega,u$ in the $x^3$ direction.  Applying the uniqueness component of Theorem \ref{lest}, we conclude that $\omega,u$ are invariant with respect to translations in the $x^3$ direction, thus
$$ \partial_3 \omega = 0; \quad \partial_3 u = 0.$$

We define the scalar field $\tilde \theta\colon [0,1] \times \M \to \R$ by solving the transport equation
\begin{equation}\label{ttt}
\partial_t \tilde \theta + {\mathcal L}_u \tilde \theta = 0
\end{equation}
with initial data $\tilde \theta = \tilde \theta_0$.  Since $u$ lies in $Y^{10,2}$ and $\tilde \theta_0$ is smooth and compactly supported, there is no difficulty defining $\tilde \theta$ uniquely, in such a way that it is continuously differentiable in both space and time, and compactly supported in space. Since $\tilde \theta$ and $u$ are invariant with respect to translations in the $x^3$ direction, $\tilde \theta$ is also.

We now can justify the formal ansatz \eqref{omega-def}:

\begin{proposition}\label{formal}  On $[0,1] \times \M$, we have
\begin{align*}
 \omega &= d \tilde \theta \wedge d x^3 \\
&= \partial_1 \tilde \theta dx^1 \wedge d x^3 + \partial_2 \tilde \theta dx^2 \wedge dx^3.
\end{align*}
\end{proposition}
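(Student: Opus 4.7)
The plan is to combine the uniqueness clause of Theorem \ref{lest} with the formal embedding of Section \ref{form}. The $x^{3}$-invariance of $\omega$, $u$, and $\tilde\theta$ has already been established in the paragraph preceding the proposition, so I may work with that invariance throughout.

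First I will build an explicit ``embedded'' 3D solution. Setting $\theta_{0}\colon\R^{2}\to\R$ to be the 2D function whose lift along $\pi$ equals $\tilde\theta_{0}$, a standard quasilinear local existence argument (a 2D analogue of Theorem \ref{lest}, applicable to the explicit operator $A_{0}$ of \eqref{stack} whose mapping properties are controlled by the same Littlewood--Paley decomposition used in the proof of Proposition \ref{pass-ok}) produces a smooth, compactly supported maximal solution $(\theta^{*}, u^{1,*}, u^{2,*})$ of the 2D active scalar system \eqref{ask}--\eqref{u2d} on some interval $[0, T_{2})$ with $T_{2}>0$. Setting $\tilde\theta^{*} := \theta^{*}\circ\pi$,
\[
\omega^{*} := d\tilde\theta^{*}\wedge dx^{3}, \qquad \tilde u^{*} := (u^{1,*}\circ\pi)\partial_{1} + (u^{2,*}\circ\pi)\partial_{2} - \tilde\theta^{*}\partial_{3},
\]
the formal computation of Section \ref{form}---purely algebraic and so rigorously valid given the smoothness---shows that $(\omega^{*}, \tilde u^{*})$ solves \eqref{vort-1}--\eqref{vort-2} with operator $\tilde A$ and initial data $\omega_{0}$. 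Since every field is $x^{3}$-invariant and compactly supported in $(x^{1},x^{2})$, the 3D Sobolev norms on $\R^{2}\times\R/\Z$ reduce to 2D norms on $\R^{2}$ up to harmless factors from the compact $\R/\Z$ fiber, so $(\omega^{*}, \tilde u^{*})\in X^{10,2}\times Y^{10,2}$ on any closed subinterval of $[0,T_{2})$.

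By the uniqueness clause of Theorem \ref{lest}, $(\omega, u) = (\omega^{*}, \tilde u^{*})$ on $[0, T_{2})\cap[0,1]$, so in particular $\omega = d\tilde\theta^{*}\wedge dx^{3}$ there. Since $\tilde\theta^{*}$ and $\tilde\theta$ both solve the transport equation $\partial_{t} f + \mathcal{L}_{u} f = 0$ with common initial data $\tilde\theta_{0}$, uniqueness of transport gives $\tilde\theta^{*}=\tilde\theta$, hence the identity on $[0, T_{2})\cap[0,1]$. To extend to $[0,1]$, let $T$ be the supremum of all $T'\in[0,1]$ for which $\omega(t) = d\tilde\theta(t)\wedge dx^{3}$ holds throughout $[0, T']$. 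Then $T\geq\min(T_{2},1)>0$, and by continuity in time of both sides the identity also holds at $t=T$. If $T<1$, the restriction of $\tilde\theta(T)$ to $\R^{2}$ is smooth and compactly supported (thanks to the identity at $T$ together with the regularity of $\omega$), so the 2D local existence may be restarted at time $T$; repeating the uniqueness argument extends the identity past $T$ and contradicts the definition of $T$, forcing $T=1$.

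The principal technical burden is (i) the 2D local-existence theorem tailored to $A_{0}$, and (ii) the verification that the embedded fields $(\omega^{*},\tilde u^{*})$ lie in the required function spaces. Both are routine: (i) is a simplification of the proof of Theorem \ref{lest}, the operator $A_{0}$ being given by an explicit finite-order sum with good mapping properties, and (ii) follows immediately from the $x^{3}$-invariance and compact $(x^{1},x^{2})$-support, which reduces 3D Sobolev norms on $\R^{2}\times \R/\Z$ to 2D norms on $\R^{2}$ (the compact $\R/\Z$ fiber only contributing a bounded factor). These routine verifications are what distinguish the rigorous argument from the purely formal computation of Section \ref{form}.
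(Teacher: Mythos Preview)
Your approach is correct but takes a genuinely different route from the paper. The paper argues directly: it sets $\alpha := \omega - d\tilde\theta \wedge dx^{3}$, computes the evolution $(\partial_t + \mathcal{L}_u)\alpha = -d\tilde\theta \wedge du^{3}$, and then uses the explicit formula \eqref{awdef} for $\tilde A$ to write $u^{3} + \tilde\theta$ as a linear expression in the components of $\alpha$ (namely $\tilde A_0^{*}\alpha_{12} + \partial_1\Delta^{-1}\alpha_{13} + \partial_2\Delta^{-1}\alpha_{23}$). Since $d\tilde\theta \wedge d\tilde\theta = 0$, this gives an $L^{2}$ energy inequality $\partial_t\|\alpha\|_{L^2}^2 \lesssim_{\tilde\theta} \|\alpha\|_{L^2}^2$, and Gronwall finishes. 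No auxiliary 2D well-posedness theory is invoked; the argument lives entirely inside the 3D framework already set up.

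Your route---build the 2D solution, lift it, and invoke 3D uniqueness from Theorem~\ref{lest}---is conceptually the most natural reading of the formal embedding, and it works. Its cost is the extra overhead you correctly flag: you need a separate 2D local well-posedness theorem for the active scalar with the specific operator $A_0$ of \eqref{stack}, which is not an instance of Theorem~\ref{lest} (that theorem assumes an order~$-2$ vector potential, whereas $A_0$ is order~$-1$, SQG-like). This is indeed routine given the bounds \eqref{traq-2}, but it is genuinely additional work. You also need the continuation step, which in turn relies on recovering enough $H^s$ regularity for $\theta(T)$ from the identity $\omega(T)=d\tilde\theta(T)\wedge dx^3$ and $\omega(T)\in H^{10}$; this is fine but should be stated. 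The paper's energy argument sidesteps all of this at the price of one explicit computation of $u^3$ in terms of $\alpha$, which is why it was preferred.
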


\begin{proof}  Set $\alpha$ to be the $2$-form
$$ \alpha \coloneqq \omega - d \tilde \theta \wedge dx^3,$$
then $\alpha$ is continuously differentiable in space and time, and our task is to show that $\alpha(t)=0$ for all $t \in [0,1]$.  From \eqref{omeg0-def} we know that $\alpha(0)=0$.  We now use \eqref{vort-1}, \eqref{ttt} to compute
\begin{align*}
(\partial_t + {\mathcal L}_u) \alpha &= (\partial_t + {\mathcal L}_u) \alpha - (\partial_t + {\mathcal L}_u) (d \tilde \theta \wedge dx^3) \\
&= 0 - d 0 \wedge dx^3 - d \tilde \theta \wedge d {\mathcal L}_u x^3 \\
&= - d \tilde \theta \wedge d u^3.
\end{align*}
On the other hand, from \eqref{vort-2} and \eqref{awdef} we have
\begin{align*}
u^3 &= \partial_1 \partial_1 \Delta^{-1} \tilde A_0^* \omega_{12} + \partial_2 \partial_2 \Delta^{-1} \tilde A_0^* \omega_{12}  \\
&\quad + \partial_1 \Delta^{-1} \omega_{13} + \partial_2 \Delta^{-1} \omega_{23} \\
&= \tilde A_0^* \alpha_{12} + \partial_1 \Delta^{-1} \partial_1 \tilde \theta + \partial_2 \Delta^{-1} \partial_2 \tilde \theta \\
&\quad + \partial_1 \Delta^{-1} \alpha_{13} + \partial_2 \Delta^{-1} \alpha_{23} \\
&= (\tilde A_0^* \alpha_{12} + \partial_1 \Delta^{-1} \alpha_{13} + \partial_2 \Delta^{-1} \alpha_{23}) - \tilde \theta.
\end{align*}
Since $d\tilde \theta \wedge d\tilde \theta = 0$, we thus have
$$
(\partial_t + {\mathcal L}_u) \alpha = - d \tilde \theta \wedge d(\tilde A_0^* \alpha_{12} + \partial_1 \Delta^{-1} \alpha_{13} + \partial_2 \Delta^{-1} \alpha_{23}).$$
Taking inner products with $\alpha$ and integrating by parts (which can be justified as $\omega$ lies in $X^{10,2}$ and $\theta$ is continuously differentiable and compactly supported), we conclude that
$$
\partial_t \| \alpha \|_{L^2}^2 = - 2 \langle d \tilde \theta \wedge d(\tilde A_0^* \alpha_{12} + \partial_1 \Delta^{-1} \alpha_{13} + \partial_2 \Delta^{-1} \alpha_{23}), \alpha \rangle_{L^2(\M)}.$$
From the proof of Proposition \ref{pass-ok}, we know that $\tilde A_0^*$ maps $L^2(\M)$ to $\dot H^1(\M)$.  As $d\tilde \theta$ is bounded, we conclude that
$$
\partial_t \| \alpha \|_{L^2}^2 \lesssim_\theta \| \alpha\|_{L^2}^2$$
and hence from Gronwall's inequality we have $\alpha(t)=0$ for all $0 \leq t \leq 1$, as required.
\end{proof}

If we insert the above proposition back into \eqref{awdef}, we have
\begin{align*}
\tilde A \omega & \coloneqq - \tilde A_0 \Delta^{-1} (\partial_1 \partial_1 \tilde \theta + \partial_2 \partial_2 \tilde \theta ) \frac{d}{d x^1} \wedge \frac{d}{d x^2} \\
&\quad + \Delta^{-1} \partial_1 \tilde \theta \frac{d}{d x^1} \wedge \frac{d}{d x^3} \\
&\quad + \Delta^{-1} \partial_2 \tilde \theta \frac{d}{d x^2} \wedge \frac{d}{d x^3}. 
\end{align*}
The first term on the right-hand side simplifies to $\tilde A_0 \tilde \theta  \frac{d}{d x^1} \wedge \frac{d}{d x^2}$.  Taking divergences (and recalling that $\tilde \theta$ is constant in the $x^3$ direction), and using \eqref{vort-2}, we conclude that
$$
 u = \partial_2 (\tilde A_0 \tilde \theta) \frac{d}{d x^1} - \partial_1 (\tilde A_0 \tilde \theta) \frac{d}{d x^2} - \tilde \theta \frac{d}{d x^3}
$$
(cf. \eqref{u-def}).
The equation \eqref{ttt} then becomes
$$
\partial_t \tilde \theta + \partial_2 (\tilde A_0 \tilde \theta) \partial_1 \tilde \theta - \partial_1 (\tilde A_0 \tilde \theta) \partial_2 \tilde \theta = 0.$$
Since $\tilde \theta$ is constant in the $x^3$ direction, we can write $\tilde \theta = \theta \circ \pi$ for some continuously differentiable, compactly supported $\theta\colon [0,1] \times \R^2 \to \R$.  From \eqref{ao} we then have
$$
\partial_t \theta + \partial_2 (A_0 \theta) \partial_1 \theta - \partial_1 (A_0 \theta) \partial_2 \theta = 0.$$
But then $\theta$ contradicts Proposition \ref{sqgb} (with $u^1 \coloneqq \partial_2(A_0 \theta)$ and $u^1 \coloneqq -\partial_1(A_0 \theta)$), as required.

\begin{remark} Applying the above arguments with $A_0$ replaced by the SQG vector potential operator $\Delta^{-1/2}$, we obtain a rigorous connection between SQG and an explicit three dimensional generalised Euler equation.  Namely, if there exists a finite time blowup solution to SQG in $\R^2$ (with suitable decay at infinity), then there exists a finite time blowup solution to a generalised Euler equation in $\R^2 \times \R/\Z$ for an explicit vector potential operator $A$ that is a Fourier multiplier of order $-2$ which is self-adjoint and positive definite.
\end{remark}

\section{Removing the periodicity}\label{period-sec}

We now modify the arguments of the previous section to prove Theorem \ref{third-blow}.  Let $\M$ denote the Euclidean manifold that is represented in Cartesian coordinates by $\R^3$.  Whereas in previous sections we would use the notations $\M$ and $\R^3$ interchangeably, in this section we will take care to distinguish the manifold $\M$ from its Cartesian coordinate representation $\R^3$.  This is because we will be using a number of other coordinate systems for $\M$, such as cylindrical coordinates, in which the coordinate space is not $\R^3$.  More precisely, for any triple $(x^1,x^2,x^3) \in \R^3$ of real numbers, we let $(x^1,x^2,x^3)_{\operatorname{car}} \in \M$ denote the associated point on $\M$, thus the map $(x^1,x^2,x^3) \mapsto (x^1,x^2,x^3)_{\operatorname{car}}$ gives an isomorphism between $\R^3$ and $\M$; however we will not view this isomorphisms as an identification, keeping the point $(x^1,x^2,x^3)_{\operatorname{car}} \in \M$ and the triple $(x^1,x^2,x^3) \in \R^3$ conceptually distinct.

As mentioned in the introduction, the strategy is to try to embed $\R^2 \times \R/\Z$ (or more precisely, $\R^2 \times \R/\Z$ equipped with a constant coefficient Riemannian metric) into $\M$.  Clearly this cannot be done globally, and certainly not isometrically; however, it can be done locally, and nearly isometrically, by modifying the familiar cylindrical coordinates\footnote{We use $\alpha$ here instead of $\theta$ to denote the angular variable, as we will reserve the latter symbol for an active scalar field later in this section.} $(z,r,\alpha)_{\operatorname{cyl}}$ of $\M$, with $(z,r,\alpha) \in \R \times [0,+\infty) \times \R/2\pi \Z$, defined in terms of the Cartesian coordinate system $(x^1,x^2,x^3)_{\operatorname{car}}$ by the change of variables
$$ (x^1,x^2,x^3)_{\operatorname{car}} = (r \cos \alpha, r \sin \alpha, z)_{\operatorname{car}} = (z,r,\alpha)_{\operatorname{cyl}}.$$
Of course, the cylindrical coordinate system is singular at the $x^3$-axis 
\begin{equation}\label{axis}
\{ (x^1,x^2,x^3)_{\operatorname{car}}: x^1=x^2=0 \} = \{ (z,r,\alpha)_{\operatorname{cyl}}: r = 0 \},
\end{equation}
but let us ignore this singularity for the moment and work away from this axis, in which the map $(z,r,\alpha) \mapsto (z,r,\alpha)_{\operatorname{cyl}}$ becomes a diffeomorphism between (most of) $\R \times [0,+\infty) \times \R/2\pi \Z$ and (most of) $\M$.  In cylindical coordinates, the Euclidean first fundamental form 
$$d\eta^2 = (dx^1)^2 + (dx^2)^2 + (dx^3)^2$$
becomes
$$ d\eta^2 = dz^2 + dr^2 + r^2 d\alpha^2$$
while the volume form
$$ d\operatorname{vol} = dx^1 \wedge dx^2 \wedge dx^3 $$
becomes
$$ d\operatorname{vol} = r dz \wedge dr \wedge d\alpha.$$
Note that the first fundamental form and the volume element both have variable coefficients due to the factors of $r$.  In the latter case, we can rectify this by replacing the radial variable $r$ with the modified radial variable $y \coloneqq r^2/2$, thus introducing\footnote{This modified cylindrical coordinate system has been used previously to simplify the true Euler equations in the case of axisymmetric solutions with swirl; see \cite{benjamin}, \cite{turkington}.} a modified cylindrical coordinate system $(z,y,\alpha)_{\operatorname{mod}}$ with $(z,y,\alpha) \in \R \times [0,+\infty) \times \R/2\pi\Z$, defined through the change of variables
$$ (x^1,x^2,x^3)_{\operatorname{car}} = (\sqrt{2y} \cos \alpha, \sqrt{2y} \sin \alpha, z)_{\operatorname{car}} = (z,\sqrt{2y},\alpha)_{\operatorname{cyl}} = (z,y,\alpha)_{\operatorname{mod}}.$$
The volume form is now constant coefficient,
$$ d\operatorname{vol} = dz \wedge dy \wedge d\theta,$$
so in particular the Hodge star $*$ and codifferential $\delta$ look the same when written in $(z,y,\alpha)_{\operatorname{mod}}$ coefficients as they do in $(x^1,x^2,x^3)_{\operatorname{car}}$ coordinates.  However the first fundamental form remains variable coefficient:
$$ d\eta^2 = dz^2 + \frac{1}{2y} dy^2 + 2y d\alpha^2.$$
Nevertheless, we observe that the first fundamental form is \emph{approximately} constant coefficient when $y$ is large.  Indeed, let $\eps>0$ be the quantity in Theorem \ref{third-blow}.  If $M \geq 10^{10}$ is a large constant depending on $\eps$ to be chosen later, and we reparameterise the annular region 
\begin{equation}\label{ann}
 \{ (z,y,\alpha)_{\operatorname{mod}}: |y-M^2/2| < M^{3/2}; |z| < M^{1/2} \}
\end{equation}
in $\M$ using rescaled coordinates $(w^1,w^2,w^3)_{\operatorname{rsc}}$, with $(w^1,w^2,w^3)$ confined to the region 
$$Q \coloneqq (-M^{1/2},M^{1/2}) \times (-M^{1/2}, M^{1/2}) \times \R/2\pi M\Z,$$ 
defined by
$$ (z,y,\alpha)_{\operatorname{mod}} = (w^1, M^2/2 + M w^2, \frac{w^3}{M})_{\operatorname{mod}} = (w^1,w^2,w^3)_{\operatorname{rsc}} $$
or equivalently
$$  (w^1,w^2,w^3)_{\operatorname{rsc}} = \left( \sqrt{M^2/2 + M w^2} \cos \frac{w^3}{M}, \sqrt{M^2/2 + M w^2} \sin \frac{w^3}{M}, w^1\right)_{\operatorname{car}}$$
then the volume form is still constant coefficient in this region,
$$ d\operatorname{vol} = dw^1 \wedge dw^2 \wedge dw^3 $$
and the first fundamental form is almost Euclidean:
\begin{equation}\label{da}
 d\eta^2 = (dw^1)^2 + \left(1 + \frac{2 w^2}{M}\right)^{-1} (dw^2)^2 + \left(1 + \frac{2 w^2}{M}\right) (dw^3)^2.
\end{equation}
From this it is easy to see that the map $w \mapsto w_{\operatorname{rsc}}$ is a bilipschitz identification of $Q$ (with the Euclidean metric) with the region \eqref{ann}, where the bilipschitz constants are bounded uniformly in $M$.  It will later be convenient (mostly for notational reasons) to embed $Q$ as a subset of $\R^2 \times \R/2\pi M\Z$, but we do not attempt to identify the remaining portion of $\R^2 \times \R/2\pi M\Z$ with any portion of $\M$, thus leaving the $(w^1,w^2,w^3)$ coordinate system as a local coordinate system parameterising \eqref{ann} only.

In order to smoothly interpolate between the Euclidean structure on $\R^2 \times \R/2\pi M\Z$ and the Euclidean structure on $\R^3$, we will (for technical reasons) need a very gentle cutoff function $\varphi \in C^c(\M)$ supported in \eqref{ann} which is bounded by $1$ and small in $\dot H^1(\M)$, while remaining invariant with respect to rotations around the axis \eqref{axis}; this is possible due to the failure of the two-dimensional Sobolev embedding $\dot H^1 \not \subset L^\infty$.  More precisely, we set
$$ \varphi( (w^1, w^2, w^3)_{\operatorname{rsc}} ) \coloneqq h( w_1, w_2 )$$
in \eqref{ann}, with $\varphi$ vanishing outside of \eqref{ann}, where $h\colon \R^2 \to [0,1]$ is a smooth, spherically symmetric function supported on $B_{\R^2}(0, \sqrt{M})$ which equals $1$ on $B_{\R^2}(0,10^3)$, and is such that
$$ h(w) = 1 - \frac{\log |w|}{\log \sqrt{M}}$$
when $10^4 \leq |w| \leq \sqrt{M}/10$, with the derivative estimates
$$ |\nabla^j h(w)| \lesssim_j \frac{1}{\log M} \frac{1}{(1+|w|)^j}$$
for all $j \geq 1$ and $w \in \R^2$.  

Let $I\colon \Lambda_0(\M) \to \Lambda_0( \R^2 \times \R/2\pi M\Z )$ be the operator defined by
\begin{align*}
 I f( w^1, w^2, w^3 ) &\coloneqq (\varphi f)( (w^1, w^2, w^3)_{\operatorname{rsc}} )
\end{align*}
for $(w^1,w^2,w^3)$ in $Q$, with $If$ vanishing outside of this region.  
The adjoint operator $I^*\colon \Lambda_0( \R^2 \times \R/2\pi M\Z ) \to \Lambda_0(\M)$ is then given by the formula
$$ I^* f( (w^1, w^2, w^3)_{\operatorname{rsc}} ) = \varphi( (w^1, w^2, w^3)_{\operatorname{rsc}} ) f(w^1,w^2,w^3)$$
in the annulus \eqref{ann}, with $I^* f$ vanishing outside of this annulus.  (The fact that $I^*$ is the adjoint of $I$ follows from the fact that the volume form on $\M$ is given by $dw^1 \wedge dw^2 \wedge dw^3$ in \eqref{ann}, so there is no Jacobian factor.)

Let $\tilde A_0\colon \Lambda_0( \R^2 \times \R/2\pi M\Z ) \to \Lambda_0( \R^2 \times \R/2\pi M\Z )$ be the operator defined by \eqref{tao} (but now with the $x^3$ variable ranging in $\R/2\pi M\Z$ rather than $\R/\Z$).
We now define the operator $\tilde A\colon B_2( \M ) \to \Gamma^2(\M)$ by the formula
\begin{equation}\label{nbd}
\tilde A = I^* A' I + \tilde \eta^{-1} \Delta^{-1} - \varphi \tilde \eta^{-1} \Delta^{-1} \varphi 
\end{equation}
where the operator $A'\colon \Lambda_2(\R^2 \times \R/2\pi M\Z ) \to \Gamma^2(\R^2 \times \R/2\pi M\Z)$ is given by the formula
\begin{equation}\label{ap-def}
\begin{split}
A' \omega & \coloneqq - \tilde A_0 \Delta_w^{-1} (\partial_1 \omega_{13} + \partial_2 \omega_{23} ) \frac{d}{d w^1} \wedge \frac{d}{d w^2} \\
&\quad + \partial_1 \Delta_w^{-1} \tilde A_0^* \omega_{12} \frac{d}{d w^1} \wedge \frac{d}{d w^3} \\
&\quad + \partial_2 \Delta_w^{-1} \tilde A_0^* \omega_{12} \frac{d}{d w^2}\wedge \frac{d}{d w^3} \\
&\quad + \Delta_w^{-1} \omega_{13} \frac{d}{d w^1} \wedge \frac{d}{d w^3} \\
&\quad + \Delta_w^{-1} \omega_{23} \frac{d}{d w^2} \wedge \frac{d}{d w^3} \\
&\quad + \Delta_w^{-1} \omega_{12} \frac{d}{d w^1} \wedge \frac{d}{d w^2}
\end{split}
\end{equation}
where $\omega \in \Lambda_2(\R^2 \times \R/2\pi M\Z )$ is expressed in coordinates as
$$ \omega = \omega_{12} dw^1 \wedge dw^2 + \omega_{13} dw^1 \wedge dw^3 + \omega_{23} dw^2 \wedge dw^3,$$
and $\Delta_w$ denotes the Euclidean Laplacian on $\R^2 \times \R/2\pi M\Z$ (the reader should take care to \emph{not} confuse this with the Laplacian $\Delta$ on $\M$, although the two operators become close to each other in some sense when $M$ is large).  As in the previous section, we need to fix an inverse of $\Delta_w^{-1}$; for sake of concreteness we set
$$ \Delta_w^{-1} \omega(w) = \int_{\R^2 \times \R/2\pi M\Z} \omega(w') K_{2\pi M}(w-w')\ d\operatorname{vol}(w')$$
where
\begin{equation}\label{kdef-mix}
 K_{2\pi M}(w) \coloneqq \lim_{N \to \infty} \sum_{n=-N}^N \frac{1}{4\pi |\tilde w + (0,0,2\pi M n)|} - \frac{\log N}{2\pi}
\end{equation}
and $\tilde w$ is an arbitrary lift of $w$ from $\R^2 \times \R/2\pi M\Z$ to $\R^3$.

Informally, $\tilde A$ behaves like the true Euler vector potential $\tilde \eta^{-1} \Delta^{-1}$ away from \eqref{ann}, but inside the smaller region
$$ \{ (w^1,w^2,w^3)_{\operatorname{rsc}}: (w^1,w^2) \in B_{\R^2}(0,10^3) \}$$
it behaves (in $(w^1,w^2,w^3)$ coordinates) like the operator defined in \eqref{awdef}.

It is easy to see that $\tilde A$ is well defined on $C^\infty_c \cap B_2(\M)$ and formally self-adjoint.  Now we verify the further properties of $\tilde A$ needed for Theorem \ref{third-blow}.

\begin{proposition}\label{pass-ok2} $\tilde A$ is a $100$-reasonable vector potential operator.
\end{proposition}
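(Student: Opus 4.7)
The plan is to verify both conditions in Definition \ref{reason-def} by decomposing $\tilde A$ into its three pieces in \eqref{nbd} and handling each separately; the first two are essentially standard, and the main work is on $I^* A' I$.

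The term $\tilde \eta^{-1} \Delta^{-1}$ is the standard Biot-Savart operator on $\R^3$, whose kernel is a constant multiple of the Newton potential $\frac{1}{4\pi|x-y|}$ (applied componentwise in the Cartesian basis); the kernel estimate \eqref{nij} follows from direct differentiation, and \eqref{pass} follows from Plancherel and the $L^2$-boundedness of the Riesz transforms. For $\varphi \tilde\eta^{-1}\Delta^{-1} \varphi$, the kernel is $\varphi(x) K_0(x,y) \varphi(y)$ where $K_0$ is the preceding kernel; since $\varphi$ is smooth and $\varphi$ together with all its derivatives up to order $102$ is bounded in $L^\infty(\M)$ (this is where the explicit construction of $h$ is used, noting that even though $\|\varphi\|_{\dot H^1}$ is small, the $L^\infty$ bounds on derivatives are all $O(1)$), the Leibniz rule immediately gives \eqref{nij} and \eqref{pass}.

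For the main term $I^* A' I$, I would first establish that $A'$ defined in \eqref{ap-def} is a $100$-reasonable vector potential operator on $\R^2 \times \R/2\pi M\Z$. This is a direct analogue of Proposition \ref{pass-ok}, the only difference being that the periodic direction has length $2\pi M$ rather than $1$; the Schur test and Littlewood-Paley arguments used there go through verbatim after taking Fourier coefficients in the periodic direction (with implied constants allowed to depend on $M$), and the kernel estimate follows by the same descent from the $\R^3$ kernel as in the proof of Proposition \ref{pass-ok}, using \eqref{kdef-mix} in place of \eqref{kdef}. Next, I would transfer the two conditions through $I^*\cdot I$: the coordinate map $w \mapsto w_{\operatorname{rsc}}$ is a bilipschitz diffeomorphism from $Q$ onto \eqref{ann} with volume-preserving Jacobian (since $d\operatorname{vol}=dw^1\wedge dw^2\wedge dw^3$ in both coordinate systems) and uniformly bounded derivatives of all orders on the support of $\varphi$ (where $|w|\lesssim \sqrt M$, keeping the metric coefficients in \eqref{da} between $\frac{1}{2}$ and $2$). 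Consequently the kernel of $I^* A' I$ at $(x,y)$ is of the schematic form $\varphi(x)\, \Phi(x,y)\, K_{A'}(w(x),w(y))\, \varphi(y)$, where $\Phi$ is a smooth change-of-basis tensor for the $(w^i)$-frame with uniformly bounded derivatives; combining the chain rule, the kernel estimate for $A'$, and the comparability $|w(x)-w(y)|\sim|x-y|$ on the (bounded) support of $\varphi$ yields \eqref{nij} for this term. The Sobolev bound \eqref{pass} reduces, after the smooth change of variables and the observation that multiplication by $\varphi$ is bounded on $H^k(\M)$ for $k\leq 100$, to the corresponding estimate for $\nabla_w^2 A'$ on $\R^2\times \R/2\pi M\Z$, which was just established.

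The main obstacle is the Sobolev bound \eqref{pass} for the $I^* A' I$ piece, where one must correctly track how the extension of $I$ and $I^*$ from $\Lambda_0$ to $2$-forms and $2$-vector fields interacts with differentiation, as the derivative can land on $\varphi$, on the coordinate-change tensors, or on the argument $\omega$. Since all the relevant smooth functions have $C^{102}$-bounds independent of $M$ (the $\dot H^1$-smallness of $\varphi$, arising from the logarithmic construction, plays no role here and will only be needed in the subsequent verification of the positive-definiteness \eqref{posdef-2}), the estimate is reduced by repeated Leibniz expansion and the chain rule to a finite sum of terms of the form $\|\varphi^{(i)} \Phi^{(j)} \nabla_w^{2+\ell} A' \tilde\omega\|_{H^{k-\ell}(Q)}$, each of which is controlled by $\|\omega\|_{H^k(\M)}$ via the reasonableness of $A'$ and the Calder\'on--Zygmund boundedness of $\nabla^2 A'$ on $L^p$ (the latter giving control of low-regularity terms that arise when many derivatives fall on the smooth multipliers).
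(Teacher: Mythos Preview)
Your approach is essentially the same as the paper's: decompose $\tilde A$ via \eqref{nbd}, treat $\tilde\eta^{-1}\Delta^{-1}$ and $\varphi\tilde\eta^{-1}\Delta^{-1}\varphi$ by standard means, and reduce the $I^* A' I$ contribution (via the bilipschitz volume-preserving change of variables $w\mapsto w_{\operatorname{rsc}}$) to the analogue of Proposition \ref{pass-ok} for $A'$ on $\R^2\times\R/2\pi M\Z$. The paper organises the last step slightly more directly---proving $\|\varphi A'\omega\|_{H^{k+2}}\lesssim_M\|\omega\|_{H^k}$ for $\omega$ supported in $\operatorname{supp}\varphi$ and then splitting into the $\Delta_w^{-1}\omega_{ij}$ and $\tilde A_0$ pieces---while you first package $A'$ as $100$-reasonable and then transfer through $I^*\cdot I$; the two are equivalent.

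One small technical gap: your claim that the Leibniz rule ``immediately gives \eqref{pass}'' for $\varphi\tilde\eta^{-1}\Delta^{-1}\varphi$ skips a low-frequency issue. When both derivatives in $\nabla^2(\varphi\,\Delta^{-1}\varphi\omega)$ fall on the outer $\varphi$, you need $\Delta^{-1}\varphi\omega$ itself (not $\nabla^2\Delta^{-1}\varphi\omega$) to lie locally in $H^k$, and $\Delta^{-1}$ does not map $L^2(\R^3)$ to $L^2_{\operatorname{loc}}$ without extra input at low frequencies. The paper handles this by observing that $\varphi\omega\in L^1(\M)$ (H\"older, since $\varphi\in L^2$), which controls the low-frequency part of $\Delta^{-1}\varphi\omega$; the identical issue and fix arise for the $\Delta_w^{-1}\omega_{ij}$ terms inside $A'$. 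This is routine once noted, but your Leibniz-and-done formulation does not account for it, and the Calder\'on--Zygmund $L^p$ bound on $\nabla^2 A'$ you invoke in the last paragraph does not by itself control the zero- and first-order pieces either.
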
 

\begin{proof}  We begin with establishing \eqref{pass}.  Let $0 \leq k\leq 100$.  From standard elliptic estimates we see that the contribution of the $\tilde \eta^{-1} \Delta^{-1}$ term in \eqref{nbd} is acceptable.  Now we turn to the $\varphi \tilde \eta^{-1} \Delta^{-1} \varphi$ term.  If $\omega$ is bounded in $H^k(\M)$, then from the Leibniz rule and H\"older's inequality $\varphi \omega$ is bounded in both $H^k(\M)$ and $L^1(\M)$.  From Sobolev embedding we see that $\Delta^{-1} \varphi\omega$ is locally in $H^{k+2}(\M)$ (this can be seen for instance by breaking up $\varphi \omega$ into low frequency and high frequency components), with bounds that are allowed to depend on $M$.   From this and the Leibniz rule we see that $\varphi \tilde \eta^{-1} \Delta^{-1} \varphi \omega$ is bounded in $H^{k+2}(\M)$, and from this we see that the contribution of the $\varphi \tilde \eta^{-1} \Delta^{-1} \varphi$ is also acceptable.

To finish the proof of \eqref{pass}, it will suffice to show that
$$ \| I^* A' I \omega \|_{H^{k+2}(\M)} \lesssim_M \| \omega \|_{H^k(\M)}.$$
Changing variables to $(w^1,w^2,w^3)_{\operatorname{rsc}}$ coordinates, we see that it suffices to show that
$$ \| \varphi A' \omega \|_{H^{k+2}(\R^2 \times \R/2\pi M\Z)} \lesssim_M \| \omega \|_{H^k(\R^2 \times \R/2\pi M\Z)}$$
whenever $\omega$ is supported on the support of $\varphi$ (which by abuse of notation we now view as a function on $\R^2 \times \R/2\pi M\Z$).  The contribution of the $\Delta_w^{-1} \omega_{ij}$ terms in \eqref{ap-def} for $ij=13,23,12$ can be treated by the same argument used to control $\varphi \tilde \eta^{-1} \Delta^{-1} \varphi$.  It thus remains to show that
\begin{equation}\label{claim1}
 \| \varphi \tilde A_0 \Delta_w^{-1} \nabla \omega \|_{H^{k+2}(\R^2 \times \R/2\pi M\Z)} \lesssim_M \| \omega \|_{H^k(\R^2 \times \R/2\pi M\Z)}
\end{equation}
and
\begin{equation}\label{claim2}
\| \varphi \nabla \Delta_w^{-1} \tilde A_0^* \omega \|_{H^{k+2}(\R^2 \times \R/2\pi M\Z)} \lesssim_M \| \omega \|_{H^k(\R^2 \times \R/2\pi M\Z)}
\end{equation}
for scalar $\omega \in H^k(\R^2 \times \R/2\pi M\Z)$ supported in the support of $\varphi$.

If $\omega$ is bounded in $H^k(\R^2 \times \R/2\pi M\Z)$, then by \eqref{traq} (replacing $\R/\Z$ with $\R/2\pi M\Z$) we see that $\tilde A_0 \Delta_w^{-1} \nabla \omega$ is bounded in $\dot H^{k+2}( \R^2 \times \R/2\pi M\Z )$, but from \eqref{tao} we also see that this function is supported in $B_{\R^2 \times \R/2\pi M\Z}(0,100)$.  From this and the fundamental theorem of calculus we see that $\tilde A_0 \Delta_w^{-1} \nabla \omega$  is in fact bounded in $H^{k+2}( \R^2 \times \R/2\pi M\Z )$, giving \eqref{claim1}.  A similar argument gives \eqref{claim2}, completing the proof of \eqref{pass}.

Now we show \eqref{nij}.  From the explicit formula $\frac{1}{4\pi |x-y|}$ for the Newton potential kernel of $\Delta^{-1}$, we see that the contribution of the $\tilde \eta^{-1} \Delta^{-1}$ term in \eqref{nbd} is acceptable.  The remaining terms in \eqref{nbd} only give a contribution to the kernel when $x,y = O_M(1)$.  The contribution of $\varphi \tilde \eta^{-1} \Delta^{-1} \varphi $ can then be seen to also be acceptable by the Leibniz rule.  By further application of the Leibniz rule and the chain rule, it thus suffices to show that the kernel $K(w,w')$ of $A'$ obeys the estimates
$$ |\nabla^i_w \nabla^j_{w'} K(w,w')| \lesssim_{M}  |w-w'|^{-i-j-1}$$
whenever $0 \leq i,j \leq M$ with $i+j \geq 1$.  But this follows from the arguments used to prove Proposition \ref{pass-ok}.
\end{proof}

\begin{proposition}  For any $\omega \in C^\infty_c \cap B_2(\M)$, we have
\begin{equation}\label{Sam2}
\int_{\M} \langle \omega, \tilde A \omega \rangle\ d \operatorname{vol} = \left(1 + O\left(\frac{1}{\log M}\right)\right) \| \omega \|_{\dot H^{-1}(\M)}^2.
\end{equation}
\end{proposition}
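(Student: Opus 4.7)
The plan is to expand $\int_\M \langle \omega, \tilde A\omega\rangle\, d\operatorname{vol}$ using the three-piece decomposition \eqref{nbd} of $\tilde A$. By Plancherel, the $\tilde\eta^{-1}\Delta^{-1}$ piece contributes exactly $\|\omega\|_{\dot H^{-1}(\M)}^2$, which will be the main term. Since multiplication by the scalar $\varphi$ is self-adjoint, the $-\varphi\tilde\eta^{-1}\Delta^{-1}\varphi$ piece contributes $-\|\varphi\omega\|_{\dot H^{-1}(\M)}^2$. By adjointness of $I$ and $I^*$, the $I^* A' I$ piece contributes $\int_{\R^2 \times \R/2\pi M\Z}\langle I\omega, A' I\omega\rangle\, d\operatorname{vol}$. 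The goal thus reduces to showing that the latter two contributions almost cancel, with an error of size $O(1/\log M)\|\omega\|_{\dot H^{-1}(\M)}^2$.

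To handle the middle piece, I would decompose $A'$ from \eqref{ap-def} into its ``Euclidean'' part $\tilde\eta_w^{-1}\Delta_w^{-1}$ (the last three lines) and the remaining operator $A''$ consisting of the first three lines involving $\tilde A_0$ and $\tilde A_0^*$. The Euclidean piece contributes exactly $\|I\omega\|_{\dot H^{-1}(\R^2\times\R/2\pi M\Z)}^2$. For $A''$, the Cauchy--Schwarz argument used in Proposition \ref{pass-ok2}, combined with the bound \eqref{traq-2} (which yields $O(1/M)$-smallness of $\tilde A_0\partial_l$ and its adjoint), gives $|\int\langle I\omega, A'' I\omega\rangle| \lesssim M^{-1}\|I\omega\|_{\dot H^{-1}(\R^2\times\R/2\pi M\Z)}^2$. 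Together with an auxiliary boundedness estimate $\|I\omega\|_{\dot H^{-1}(\R^2\times\R/2\pi M\Z)} \lesssim \|\omega\|_{\dot H^{-1}(\M)}$ (which itself follows from the near-isometry of the rescaled coordinates together with control on the commutator $[\varphi,\Delta^{-1}]$), the task then reduces to proving
\[
\|I\omega\|_{\dot H^{-1}(\R^2\times\R/2\pi M\Z)}^2 - \|\varphi\omega\|_{\dot H^{-1}(\M)}^2 = O\!\left(\frac{1}{\log M}\right)\!\|\omega\|_{\dot H^{-1}(\M)}^2.
\]

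This comparison is the main obstacle. Both norms are quadratic forms in essentially the same function (identifying $\varphi\omega$ on $\M$ with $I\omega$ on $\R^2\times\R/2\pi M\Z$ via the rescaled coordinates, in which the volume forms coincide), but computed with two different Green's functions: the Newton kernel of $\Delta$ on $\M$ versus the image-sum kernel $K_{2\pi M}$ of \eqref{kdef-mix}. The two kernels differ for two reasons: the rescaled-coordinate metric \eqref{da} deviates from Euclidean by $O(1/M)$, and the periodicity in $w^3$ adds the image contributions absent on $\M$. The specific design of $\varphi = h(w^1,w^2)$ is critical here: the logarithmic interpolation of $h$ from $1$ to $0$ over the annulus $10^4\le|w|\le \sqrt{M}/10$ ensures $\|h\|_{\dot H^1(\R^2)}^2=O(1/\log M)$, and this smallness is what must eventually produce the claimed $1/\log M$ factor.

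The hard step will be making this last comparison quantitative. The $O(1/M)$ error from the metric deformation is tractable via a perturbative expansion of $\Delta^{-1}$ in the rescaled coordinates using \eqref{da}. The more delicate piece is the image-sum contribution, where the slow (logarithmic in $|(w^1,w^2)|$) behavior of $K_{2\pi M}$ at scales $\sim\sqrt{M}$ effectively pairs $\varphi\omega$ against test functions that vary slowly in the $(w^1,w^2)$ variables. Extracting the required $1/\log M$ decay will rely on leveraging the $\dot H^1(\R^2)$-smallness of $h$, likely via a duality argument or a commutator identity that exchanges the slow-function pairing for an $\dot H^1(\R^2)$ pairing involving $\nabla h$, taking advantage of the two-dimensional Sobolev failure $\dot H^1 \not\subset L^\infty$ that motivated the construction of $h$ in the first place.
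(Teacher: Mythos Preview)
Your reduction is correct and matches the paper's: after extracting the main term $\|\omega\|_{\dot H^{-1}(\M)}^2$ from the $\tilde\eta^{-1}\Delta^{-1}$ piece and disposing of the $\tilde A_0$-dependent terms via \eqref{traq-2}, the crux is exactly your displayed comparison of the two $\dot H^{-1}$-type quadratic forms on $\varphi\omega$. The paper states this as \eqref{rg-2}.

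Where your proposal and the paper diverge is in how this comparison is executed, and the paper's mechanism is more specific than ``duality or commutator''. First, the paper writes $\omega=dv$ with $\|v\|_{L^2(\M)}=\|\omega\|_{\dot H^{-1}(\M)}$ at the outset and reformulates everything as an $L^2\to L^2$ operator bound \eqref{dos} on the map
\[
v\;\longmapsto\;\delta\!\int_{\R^2\times\R/2\pi M\Z}\varphi(\cdot)\,L(\cdot,w')\,\varphi(w')\,dv(w')\,d\operatorname{vol}(w'),
\]
where $L(w,w')=\frac{1}{4\pi|w_{\operatorname{rsc}}-w'_{\operatorname{rsc}}|}(\tilde\eta')^{-1}-K_{2\pi M}(w-w')\tilde\eta^{-1}$ is the difference of the two Green kernels. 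This move is not merely cosmetic: your auxiliary estimate $\|I\omega\|_{\dot H^{-1}}\lesssim\|\omega\|_{\dot H^{-1}}$ with a uniform-in-$M$ constant does \emph{not} follow from the obvious duality argument, since $\|\nabla\varphi\|_{L^3(\M)}\sim M^{1/3}/\log M$ blows up (the spare $1/M$ from \eqref{traq-2} absorbs this for the $A''$ piece, but not for the main comparison). Working with $v\in L^2$ sidesteps the issue.

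Second, for the operator bound itself, the paper proves pointwise estimates \eqref{kernel} on $\nabla_w^i\nabla_{w'}^jL$, then splits $L=L_1+L_2$ into a local part (where $|w-w'|\lesssim\log M\cdot\min(|\pi(w)|,|\pi(w')|)$) and a global part. The local piece is a Calder\'on--Zygmund kernel with constant $O(M^{-1/2}\log^{O(1)}M)$, handled by the David--Journ\'e $T(1)$ theorem. The global piece is handled by a \emph{weighted Schur test} with weight $(1+|\pi(w)|)^{-1}$: after integrating by parts, each derivative landing on $\varphi$ produces a factor $|\nabla h|\lesssim(\log M)^{-1}(1+|\pi(w)|)^{-1}$, and the remaining kernel is integrable against that weight over $\R^2\times\R/2\pi M\Z$. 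Thus the $1/\log M$ gain comes from the \emph{pointwise} decay $|\nabla h(w)|\lesssim(\log M)^{-1}(1+|w|)^{-1}$ rather than from the integrated quantity $\|h\|_{\dot H^1(\R^2)}^2$; your intuition is in the right direction, but the actual mechanism is a Schur-type bound, not an $\dot H^1$ pairing.
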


\begin{proof}  
From Fourier analysis we may write $\omega = dv$ for some $v \in L^2 \cap \Lambda_1(\M)$ with
$$ \| v \|_{L^2(\M)} = \| \omega \|_{\dot H^{-1}(\M)}.$$
From integration by parts, we have
\begin{equation}\label{int1}
\int_{\M} \langle dv, \tilde \eta^{-1} \Delta^{-1}  dv \rangle\ d \operatorname{vol}= \| v \|_{L^2(\M)}^2
\end{equation}
so by \eqref{nbd} and the triangle inequality it suffices to show that
$$
\int_{\M} \langle dv, (I^* A' I - \varphi \tilde \eta^{-1} \Delta^{-1} \varphi) dv \rangle\ d \operatorname{vol} = O\left( \frac{1}{\log M}  \| v \|_{L^2(\M)}^2 \right).$$
From the Newton formula
$$ \Delta^{-1} f(x) = \int_\M \frac{f(y)}{4\pi |x-y|}\ d\operatorname{vol}(y) $$
one has
\begin{align*}
 \int_\M \langle dv, \varphi \tilde \eta^{-1} \Delta^{-1} \varphi dv \rangle\ d \operatorname{vol}
&= \int_\M \int_\M \frac{\langle \varphi dv(x), \tilde \eta^{-1} \varphi dv(y) \rangle}{4\pi|x-y|}\ d \operatorname{vol}(x) \ d \operatorname{vol}(y) \\
&= \int_{\R^2 \times \R/2\pi M\Z} \int_{\R^2 \times \R/2\pi M\Z} \frac{\langle Idv(w), (\tilde \eta')^{-1} Idv(w') \rangle}{4\pi |w_{\operatorname{rsc}} - w'_{\operatorname{rsc}}|}
\ d \operatorname{vol}(w) \ d \operatorname{vol}(w') 
\end{align*}
where $\eta'$ is the metric on the support of $I\omega$ formed by pulling back the Euclidean metric $\eta$, thus by \eqref{da}
\begin{equation}\label{dae}
 (d\eta')^2 = (dw^1)^2 + \left(1 + \frac{2 w^2}{M}\right)^{-1} (dw^2)^2 + \left(1 + \frac{2 w^2}{M}\right) (dw^3)^2.
\end{equation}
Meanwhile, from \eqref{ap-def} we have
\begin{align*}
\int_\M \langle dv, I^* A' I dv \rangle\ d\operatorname{vol} &= \int_{\R^2 \times \R/2\pi M\Z} \langle I dv, A' I dv \rangle\ d\operatorname{vol} \\
&= - 2 \int_{\R^2 \times \R/2\pi M\Z} \langle I dv, \tilde A_0 \Delta_w^{-1} (\partial_1 (Idv)_{13} + \partial_2 (Idv)_{23} ) \frac{d}{d w^1} \wedge \frac{d}{d w^2} \rangle\ d\operatorname{vol} \\
&\quad + \int_{\R^2 \times \R/2\pi M\Z} \int_{\R^2 \times \R/2\pi M\Z} \\
&\quad\quad K_{2\pi M}(w-w') \langle Idv(w), \tilde \eta^{-1} Idv(w') \rangle
\ d \operatorname{vol}(w) \ d \operatorname{vol}(w'),
\end{align*}
where by abuse of notation $\eta$ now also denotes the Euclidean metric on $\R^2 \times \R/2\pi M\Z$, and $K_{2\pi M}$ was defined in \eqref{kdef-mix}.  Thus by the triangle inequality it will suffice to establish the estimates
\begin{equation}\label{rg-1}
 \int_{\R^2 \times \R/2\pi M\Z} \left\langle I dv, \tilde A_0 \Delta_w^{-1} (\partial_1 (Idv)_{13} + \partial_2 (Idv)_{23} ) \frac{d}{d w^1} \wedge \frac{d}{d w^2} \right\rangle\ d\operatorname{vol} = O\left( 
\frac{1}{\log M} \| v \|_{L^2(\M)}^2 \right)
\end{equation}
and
\begin{equation}\label{rg-2}
\begin{split}
&\int_{\R^2 \times \R/2\pi M\Z} \int_{\R^2 \times \R/2\pi M\Z} \\
&\quad \frac{\langle Idv(w), (\tilde \eta')^{-1} Idv(w') \rangle}{4\pi |w_{\operatorname{rsc}} - w'_{\operatorname{rsc}}|} - K_{2\pi M}(w-w') \langle Idv(w), \tilde \eta^{-1} Idv(w') \rangle\\
&\quad 
\ d \operatorname{vol}(w) \ d \operatorname{vol}(w') 
=  O( 
\frac{1}{\log M} \| v \|_{L^2(\M)}^2 ).
\end{split}
\end{equation}
The bound \eqref{rg-1} follows easily from \eqref{traq-2} (with the factor of $\frac{1}{\log M}$ improved to $\frac{1}{M}$), so we turn to \eqref{rg-2}.  Forming the tensor kernel
$$ L(w,w') \coloneqq \frac{1}{4\pi |w_{\operatorname{rsc}} - w'_{\operatorname{rsc}}|} \tilde \eta'(w')^{-1} - K_{2\pi M}(w-w') \tilde \eta(w')^{-1}$$
we see from integration by parts, the chain rule and duality that it suffices to prove the operator norm bound
\begin{equation}\label{dos}
 \left\| \delta \int_{\R^2 \times \R/2\pi M\Z} \varphi(\cdot) L(\cdot,w') \varphi(w') dv(w')\ d\operatorname{vol}(w')\right \|_{L^2(\R^2 \times \R/2\pi M\Z)}
\lesssim \frac{1}{\log M} \| v \|_{L^2(\R^2 \times \R/2\pi M\Z)}
\end{equation}
for all $v \in L^2 \cap \Lambda_1(\R^2 \times \R/2\pi M\Z)$.

To prove this estimate, we first claim the kernel estimates
\begin{equation}\label{kernel}
 |\nabla_w^i \nabla_{w'}^j L(w,w')| \lesssim_{i,j} \frac{1}{\sqrt{M}} \frac{1}{|w-w'|^{i+j}} \left(\frac{1}{\sqrt{M}} + \frac{1}{|w-w'|}\right)
\end{equation}
for distinct $w,w' \in Q$ and $i,j \geq 0$.

From \eqref{dae} we have
$$ |\tilde \eta'(w')^{-1} - \tilde \eta(w')| \lesssim \frac{1}{\sqrt{M}}$$
for $w' \in Q$, and more generally
$$ \left|\nabla_{w'}^j (\tilde \eta'(w')^{-1} - \tilde \eta(w'))\right| \lesssim_j \frac{1}{\sqrt{M}} \frac{1}{M^j}$$
for $w' \in Q$ and $j \geq 0$.  Also, from many applications of the chain rule one has
$$ \left|\nabla_w^i \nabla_{w'}^j \frac{1}{4\pi |w_{\operatorname{rsc}} - w'_{\operatorname{rsc}}|}\right| \lesssim_{i,j} \frac{1}{|x-y|^{1+i+j}}$$
for $w,w' \in Q$ and $i,j \geq 0$, and hence by the product rule
$$ \left|\nabla_w^i \nabla_{w'}^j (\frac{1}{4\pi |w_{\operatorname{rsc}} - w'_{\operatorname{rsc}}|} (\tilde \eta'(w')^{-1} - \tilde \eta(w')^{-1})\right| \lesssim_{i,j} \frac{1}{\sqrt{M}} \frac{1}{|w-w'|^{1+i+j}}.$$
Thus by the triangle inequality it suffices to show that
$$ \left|\nabla_w^i \nabla_{w'}^j (\frac{1}{4\pi |w_{\operatorname{rsc}} - w'_{\operatorname{rsc}}|} \tilde \eta(w')^{-1} - K_{2\pi M}(w-w') \tilde \eta(w')^{-1})\right| \lesssim_{i,j} \frac{1}{\sqrt{M}} \frac{1}{|w-w'|^{i+j}} \left(\frac{1}{\sqrt{M}} + \frac{1}{|w-w'|}\right).$$
As $\tilde \eta$ is constant coefficient, we can drop the $\tilde \eta(w')^{-1}$ factor, thus we reduce to establishing
\begin{equation}\label{ttt-r}
 \left|\nabla_w^i \nabla_{w'}^j (\frac{1}{4\pi |w_{\operatorname{rsc}} - w'_{\operatorname{rsc}}|} - K_{2\pi M}(w-w'))\right| \lesssim_{i,j} \frac{1}{\sqrt{M}} \frac{1}{|w-w'|^{i+j}} \left(\frac{1}{\sqrt{M}} + \frac{1}{|w-w'|}\right)
\end{equation}
for $w,w' \in Q$ and $i,j \geq 0$.

We first dispose of the case where $w,w'$ are very far apart in the sense that $|w-w'| \geq M/10$.  From \eqref{kdef-mix} (and recalling that the $w^1$ and $w^2$ components of $w,w'$ are $O(\sqrt{M})$) we see that
$$ |K_{2\pi M}(w-w')| \lesssim \frac{1}{M}$$
which by the harmonicity of $K_{2\pi M}$ implies that
$$ |\nabla_w^i \nabla_{w'}^j K_{2\pi M}(w-w')| \lesssim_{i,j} \frac{1}{M^{1+i+j}}$$
Similarly, as the map $w \mapsto w_{\operatorname{rsc}}$ is bilipschitz with all derivatives bounded, we have
$$ \left|\frac{1}{4\pi |w_{\operatorname{rsc}} - w'_{\operatorname{rsc}}|}\right| \lesssim \frac{1}{M}$$
and more generally
$$ \left|\nabla_w^i \nabla_{w'}^j \frac{1}{4\pi |w_{\operatorname{rsc}} - w'_{\operatorname{rsc}}|}\right| \lesssim_{i,j} \frac{1}{M^{1+i+j}},$$
and so \eqref{ttt-r} follows from the triangle inequality in this case.

Henceforth we suppose that $|w-w'| < M/10$.  From \eqref{kdef-mix} we now have
$$ \left|K_{2\pi M}(w-w') - \frac{1}{4\pi |w-w'|}\right| \lesssim \frac{1}{M}$$
which by harmonicity implies
$$ \left|\nabla_w^i \nabla_{w'}^j (K_{2\pi M}(w-w') - \frac{1}{4\pi |w-w'|}) \right| \lesssim_{i,j} \frac{1}{M} \frac{1}{|w-w'|^{i+j}}.$$
Thus by the triangle inequality, it suffices to show that

\begin{equation}\label{wawa}
 \left|\nabla_w^i \nabla_{w'}^j \left(\frac{1}{|w_{\operatorname{rsc}} - w'_{\operatorname{rsc}}|} - \frac{1}{|w - w'|}\right)\right| \lesssim_{i,j} \frac{1}{\sqrt{M}} \frac{1}{|w-w'|^{i+j}} \left(\frac{1}{\sqrt{M}} + \frac{1}{|w-w'|}\right).
\end{equation}
We divide into two cases, depending on whether $|w-w'|$ is less than $\sqrt{M}$ or not.  First suppose that $|w-w'| \leq \sqrt{M}$, thus $w,w'$ both lie in $B_{\R^2 \times \R/2\pi M\Z}(w_0,\sqrt{M})$ for some $w_0 \in Q$.  Let $B$ denote the convex region
$$ B \coloneqq \{ u \in B_{\R^3}(0,1): w_0 + \sqrt{M} u \in Q \},$$
and let $f\colon B \to \R^3$ be the map
\begin{equation}\label{fdef}
 f( u ) \coloneqq \frac{ (w_0 + \sqrt{M} u)_{\operatorname{rsc}} - (w_0)_{\operatorname{rsc}} }{ \sqrt{M} }
\end{equation}
then it is easy to see that $f$ is bilipschitz on $B$ with constants comparable to $1$, and from Taylor expansion we see that 
$$ \nabla^i_u (f(u) - u) = O_i\left(\frac{1}{\sqrt{M}}\right)$$
on $B$ for all $i \geq 0$.  In particular, we have for distinct $u,v \in B$ that $|f(u)-f(v)|$ is comparable to $|u-v|$, and from several applications of the chain rule (and writing $f(u) = u + \frac{1}{\sqrt{M}} g(u)$ for some function $g$ with all derivatives bounded on $B$) we have
$$ \nabla^i_u \nabla^j_v \left(\frac{1}{|f(u)-f(v)|} - \frac{1}{|u-v|}\right) = O_{i,j}\left( \frac{1}{\sqrt{M} |u-v|^{1+i+j}} \right)$$
for $i,j \geq 0$.  Setting $w = w_0 + \sqrt{M} u$ and $w' = w_0 + \sqrt{M} v$, we obtain \eqref{wawa} when $|w-w'| \leq \sqrt{M}$.

To complete the proof of \eqref{kernel}, we need to establish \eqref{wawa} in the case $\sqrt{M} < |w-w'| \leq M/10$.  Set $R \coloneqq |w-w'|/\sqrt{M}$, then $1 \leq R \leq \sqrt{M}/10$, and $w,w'$ both lie in $B_{\R^2 \times \R/2\pi M\Z}(w_0,R \sqrt{M})$ for some $w_0 \in Q$.  Setting $B_R$ to be the convex region
$$ B_R \coloneqq \{ u \in B_{\R^3}(0,R): w_0 + \sqrt{M} u \in Q \}$$
and defining $f$ by \eqref{fdef} as before, one has from Taylor expansion that
$$ f(u) = u + O\left( \frac{R^2}{\sqrt{M}} \right) $$
and more generally
$$ \nabla^i_u (f(u) - u) = O_i(\frac{R^{2-i}}{\sqrt{M}} )$$
on $B_R$ for all $i \geq 0$.  As before, $|f(u)-f(v)|$ is comparable to $|u-v|$. By many applications of the chain rule, we have
$$ \nabla^i_u \nabla^j_v \left(\frac{1}{|f(u)-f(v)|} - \frac{1}{|u-v|}\right) = O_{i,j}\left( \frac{R}{\sqrt{M}} |u-v|^{-1-i-j}\right )$$
for $i,j \geq 0$.  Setting $w = w_0 + \sqrt{M} u$ and $w' = w_0 + \sqrt{M} v$, so that $|u-v|$ is comparable to $R$, we obtain \eqref{wawa} when $\sqrt{M} < |w-w'| \leq M/10$.

This completes the proof of \eqref{kernel} in all cases.  We now return to the proof of \eqref{dos}.
Let $\pi\colon \R^2 \times \R/2\pi M\Z \to \R^2$ denote the projection map $\pi\colon (w^1,w^2,w^3) \mapsto (w^1,w^2)$.
We smoothly partition $L = L_1 + L_2$, where $L_1(w,w')$ is the ``local'' part of $L(w,w')$ smoothly restricted to the region where $|w-w'| \leq \log M \min( |\pi(w)|, |\pi(w')| )$, and $L_1$ is the ``global'' part, restricted to the region where $|w-w'| \gg \log M \min( |\pi(w)|, |\pi(w')|)$.  More explicitly, we can set
$$ L_1(w,w') \coloneqq L(w,w') \chi\left( \frac{w-w'}{|\pi(w)| \log M} \right) \chi\left( \frac{w-w'}{|\pi(w')| \log M} \right)$$
where $\chi\colon \R^2 \to [0,1]$ is a smooth function supported on $B_{\R^2}(0,1)$ that equals one on $B_{\R^2}(0,1/2)$, and set $L_2 \coloneqq L - L_1$.  By the triangle inequality, it thus suffices to establish the bounds
\begin{equation}\label{t1}
 \left\| \delta \int_{\R^2 \times \R/2\pi M\Z} \varphi(\cdot) L_l(\cdot,w') \varphi(w') dv(w')\ d\operatorname{vol}(w') \right\|_{L^2(\R^2 \times \R/2\pi M\Z)}
\lesssim \frac{1}{\log M} \| v \|_{L^2(\R^2 \times \R/2\pi M\Z)}
\end{equation}
for $l=1,2$.  

In the $l=1$ case, we note that as $L_l$ is supported in the regime where $|w-w'| \lesssim |\pi(w)| \log M, |\pi(w')| \log M \lesssim \sqrt{M} \log M$, and we see from \eqref{kernel} and the product rule that we have the Calder\'on-Zygmund bounds
$$ |\nabla_w \nabla_{w'}(\varphi(w) L_l(w,w') \varphi(w'))| \lesssim \frac{\log^{O(1)} M}{\sqrt{M}} \frac{1}{|w-w'|^{3}}$$
and
$$ |\nabla_{w,w'} \nabla_w \nabla_{w'}(\varphi(w) L_l(w,w') \varphi(w'))| \lesssim \frac{\log^{O(1)} M}{\sqrt{M}} \frac{1}{|w-w'|^{4}}$$
for $w \neq w'$.  Also, the operator that maps $v$ to
$$ \delta \int_{\R^2 \times \R/2\pi M\Z} \varphi(\cdot) L_l(\cdot,w') \varphi(w') dv(w')\ d\operatorname{vol}(w') $$
clearly annihilates the constant function $1$, as does its adjoint.  Applying the $T(1)$ theorem of David and Journ\'e \cite{david}, we obtain the $l=1$ case of \eqref{t1} (with the $\frac{1}{\log M}$ factor improved to $\frac{\log^{O(1)} M}{\sqrt{M}}$).

Now we handle the $l=2$ case.  From \eqref{kernel} and the product rule, we have the bounds
\begin{align*}
 |\nabla_w \nabla_{w'}(\varphi(w) L_l(w,w') \varphi(w'))| &\lesssim \frac{1}{\sqrt{M}} 
\left(\frac{1}{\sqrt{M}} + \frac{1}{|w-w'|}\right) \\
&\quad \times \frac{1}{\log^2 M} \frac{1}{1+|\pi(w)|} \frac{1}{1+|\pi(w')|}
\end{align*}
since any factor of $\frac{1}{|w-w'|}$ that comes when a derivative falls on $L_l$ can be replaced instead by $\frac{1}{\log M} \frac{1}{1+|\pi(w)|}$ or $\frac{1}{\log M} \frac{1}{1+|\pi(w')|}$.  In particular, we have the estimates
$$
\int_{\R^2 \times \R/2\pi M \Z}  |\nabla_w \nabla_{w'}(\varphi(w) L_l(w,w') \varphi(w'))|\ \frac{d\operatorname{vol}(w')}{1 + |\pi(w')|} \lesssim \frac{1}{\log M} \frac{1}{1+|\pi(w)|}
$$
for all $w$, and
$$
\int_{\R^2 \times \R/2\pi M \Z}  |\nabla_w \nabla_{w'}(\varphi(w) L_l(w,w') \varphi(w'))|\ \frac{d\operatorname{vol}(w)}{1 + |\pi(w)|} \lesssim \frac{1}{\log M} \frac{1}{1+|\pi(w')|}
$$
for all $w'$.  The $l=2$ case of \eqref{t1} then follows from the weighted Schur test (after integrating by parts to move all derivatives onto $\varphi(w) L_l(w,w') \varphi(w')$).
\end{proof}

Now we prove Theorem \ref{third-blow}.  As in the previous section, let $\theta_0:\R^2 \to \R$ be initial data of the type in Proposition \ref{sqgb}; we can choose $\theta_0$ so that it is supported in the ball $B_{\R^2}(0,200)$.  Let $\tilde \theta_0\colon \M \to \R$ be the lift of $\theta_0$ to $\M$ defined by setting
$$ \tilde \theta_0( (w^1,w^2,w^3)_{\operatorname{rsc}} ) \coloneqq \theta_0( w^1,w^2 )$$
in the region \eqref{ann}, with $\tilde \theta_0$ vanishing outside of \eqref{ann}.  Clearly $\tilde \theta$ is smooth and supported in the set $\{ (w^1,w^2,w^3)_{\operatorname{rsc}}: |w^1|, |w^2| \leq 200 \}$.  We define the initial data $\omega_0 \in C^\infty_c \cap B_2(\M)$ by the formula
\begin{equation}\label{omeg1-def}
\omega_0\coloneqq d\tilde \theta_0 \wedge dw^3 = d(\tilde \theta_0 dw^3),
\end{equation}
noting that the $1$-form $dw^3$ is well-defined on the support of $\tilde \theta_0$.  This is clearly a closed $2$-form.
We now claim (for $M$ sufficiently large) that Theorem \ref{third-blow} holds with this choice of initial data $\omega_0$ and with the operator $\tilde A$ constructed above as vector potential operator.  We have already verified that $\tilde A$ is $100$-reasonable, formally self-adjoint, and obeys \eqref{posdef-2} (if $M$ is sufficiently large depending on $\eps$).  Thus, the only way that Theorem \ref{third-blow} can still fail is if there is a solution $\omega \in X^{10,2}$, $u \in Y^{10,2}$ to the generalised Euler equations with vector potential operator $\tilde A$ and initial vorticity $\omega_0$ on the time interval $[0,1]$.

Suppose for contradiction that this is the case.
Obseve that $\omega_0$ is invariant with respect to rotations around the $x^3$ axis \eqref{axis} (which, in the region \eqref{ann}, corresponds to translations in the $w^3$ direction), and that $\tilde A$ commutes with these rotations.  Thus, if $\omega,u$ solve the generalised Euler equations with initial data $\omega_0$, then so do any rotations of $\omega,u$ around the $x^3$ axis.  Applying the uniqueness component of Theorem \ref{lest}, we conclude that $\omega,u$ are invariant with respect to rotations around the $x^3$ axis.  In particular, in the region \eqref{ann}, we have
$$ \partial_3 \omega = 0; \quad \partial_3 u = 0$$
in the $(w^1,w^2,w^3)_{\operatorname{rsc}}$ coordinate system.

We define the scalar field $\tilde \theta\colon [0,1] \times \M \to \R$ by solving the transport equation \eqref{ttt} 
with initial data $\tilde \theta = \tilde \theta_0$.  Again, there is no difficulty defining $\tilde \theta$, and it is continuously differentiable in both space and time, and compactly supported in space. Since $\tilde \theta$ and $u$ are invariant with respect to rotations around the $x^3$ axis, $\tilde \theta$ is also.

Next, we claim that $\omega$ stays well within the region \eqref{ann} and obeys the analogue of Proposition \ref{formal}:

\begin{proposition}  For each $0 \leq t \leq 1$, let $\Omega(t)$ be the subset of \eqref{ann} defined by
$$ \Omega(t) \coloneqq \{ (w^1,w^2,w^3)_{\operatorname{rsc}}: |w^1|, |w^2| \leq 300+t \}.$$
Then $\omega(t)$ and $\tilde \theta(t)$ are supported in $\Omega(t)$ for all $0 \leq t \leq 1$, and we have
\begin{align*}
 \omega &= d \tilde \theta \wedge d w^3 \\
&= \partial_1 \tilde \theta dw^1 \wedge d w^3 + \partial_2 \tilde \theta dw^2 \wedge dw^3.
\end{align*}
\end{proposition}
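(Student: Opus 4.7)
The plan is to adapt the argument of Proposition \ref{formal} to this setting, with two new ingredients: a continuity (bootstrap) argument controlling the supports of $\omega$ and $\tilde{\theta}$ so that they remain strictly inside the rescaled coordinate chart, and a localisation observation that lets the ``correction'' term in \eqref{nbd} be ignored on an open neighbourhood of these supports. Concretely, let $T_*$ denote the supremum of $t_0 \in [0,1]$ for which the supports of $\omega(s)$ and $\tilde{\theta}(s)$ lie in $\Omega(s)$ for all $s \in [0,t_0]$. Since $\omega_0$ and $\tilde{\theta}_0$ are supported in $\{|w^1|,|w^2| \leq 200\}$ with margin $100$ over $\Omega(0)$, and $u$ is continuously differentiable in both variables, $T_* > 0$. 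Fix a smooth cutoff $\chi$ supported in \eqref{ann} with $\chi \equiv 1$ on $\{|w^1|,|w^2| \leq 500\}$, and define the globally smooth closed $2$-form $\tilde{\omega} \coloneqq d(\tilde{\theta}\chi\, dw^3)$; then $\tilde{\omega} = d\tilde{\theta}\wedge dw^3$ on $\{|w^1|,|w^2| \leq 500\}$ (which contains both supports for $s \in [0,T_*]$), and $\tilde{\omega}(0) = \omega_0$ by \eqref{omeg1-def}. Set $\alpha \coloneqq \omega - \tilde{\omega}$, so $\alpha(0) = 0$.

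Repeating the Cartan formula calculation from Proposition \ref{formal} gives $(\partial_t + {\mathcal L}_u)\alpha = -d(\tilde{\theta}\, du^3)$ on $\{|w^1|,|w^2| \leq 500\}$, where $u^3$ denotes the $w^3$-component of $u$ in rescaled coordinates. To evaluate $u^3$ I would restrict further to $\{|w^1|,|w^2| \leq 10^3\}$, where $\varphi \equiv 1$ on an open set and $\omega = \varphi\omega$ by the bootstrap. On such a set the correction term satisfies $(\tilde{\eta}^{-1}\Delta^{-1} - \varphi\tilde{\eta}^{-1}\Delta^{-1}\varphi)\omega = (1-\varphi)\tilde{\eta}^{-1}\Delta^{-1}\omega$, which vanishes together with all its derivatives (since $\nabla\varphi = 0$ there), so $u = \delta I^* A' I\omega = \delta A'\omega$ in rescaled coordinates. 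Substituting $\omega = \tilde{\omega} + \alpha$ and using the rotational invariance $\partial_3\tilde{\theta} = 0$ in \eqref{ap-def}, exactly as in Proposition \ref{formal}, yields $u^3 = -\tilde{\theta} + u^3[\alpha]$, where $u^3[\alpha]$ is linear in $\alpha$ with $\|du^3[\alpha]\|_{L^2(\M)} \lesssim \|\alpha\|_{L^2(\M)}$ by \eqref{pass}. Using $d\tilde{\theta}\wedge d\tilde{\theta} = 0$ then collapses the equation for $\alpha$ to the manifestly linear identity $(\partial_t + {\mathcal L}_u)\alpha = -d\tilde{\theta} \wedge du^3[\alpha]$.

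A standard energy estimate, pairing against $\alpha$ in $L^2(\M)$ and bounding the transport term by the $C^1$ norm of $u$ (controlled via \eqref{pass} and Sobolev embedding from $\omega \in X^{10,2}$), yields $\partial_t\|\alpha\|_{L^2}^2 \lesssim \|\alpha\|_{L^2}^2$; Gronwall and $\alpha(0) = 0$ give $\alpha \equiv 0$ on $[0, T_*]$. To close the bootstrap, note that with $\alpha = 0$ the remaining components of $u$ in rescaled coordinates are $u^1 = \partial_2 \tilde{A}_0\tilde{\theta}$ and $u^2 = -\partial_1 \tilde{A}_0\tilde{\theta}$ by \eqref{ap-def}, and the factor $2/M$ in \eqref{tao} gives $|u^1|,|u^2| = O(1/M)$ uniformly on a neighbourhood of the supports; since $\Omega(t)$ expands at rate $1$ in the $w^1,w^2$ directions from an initial margin of $100$ over the supports, the supports remain strictly inside $\Omega(T_*)$ at $t = T_*$, forcing $T_* = 1$ by continuity. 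The main technical subtlety is the localisation step: one needs $\varphi$ to equal $1$ on an \emph{open} neighbourhood of every point of the support (so that derivatives of $1-\varphi$ also vanish), which is exactly why $h$ was built to be identically $1$ on the fixed ball $B_{\R^2}(0, 10^3)$ rather than merely on a small neighbourhood of the initial support of $\tilde{\theta}_0$.
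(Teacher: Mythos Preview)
Your proof is correct and follows essentially the same bootstrap-plus-energy-estimate structure as the paper's argument. The one noteworthy variation is in how you close the bootstrap: the paper observes that near the boundary of $\Omega(T)$ the output of $\tilde A_0$ (which is supported in $\{|w^1|\le 20,\ |w^2|\le 2\}$) actually vanishes, so the $w^1,w^2$ velocity components are identically zero there and the support cannot reach $\partial\Omega(T)$; you instead use the global bound $|u^1|,|u^2|=O(1/M)$ coming from the prefactor in \eqref{tao}, which is less than the unit expansion rate of $\Omega(t)$. Both arguments are valid, and your introduction of the auxiliary cutoff $\chi$ to make $\tilde\omega$ globally defined is a slightly more careful way of handling the fact that $dw^3$ is only a local $1$-form.
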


\begin{proof}  We again use the barrier method.  Since $u$ lies in $Y^{10,2}$, it is bounded, and hence $\omega$ is transported at bounded speed.  Suppose the first claim fails, thus $\omega(t)$ or $\tilde \theta(t)$ is not supported in $\Omega(t)$ for some $0 \leq t \leq 1$.  Let $0 \leq T \leq 1$ be the infimum of all the times $t$ in which $\omega(t)$ or $\tilde \theta(t)$ is not supported in $\Omega(t)$.  Since this is a closed condition, we have $T < 1$.  Since $\omega(0)$ and $\tilde \theta(0)$ are supported in the interior of $\Omega(0)$ and is transported at bounded speed, we have $T > 0$.  

For times $t \in [0,T]$, set $\alpha$ to be the $2$-form
$$ \alpha \coloneqq \omega - d \tilde \theta \wedge dw^3,$$
then $\alpha$ is continuously differentiable in space and time and supported in \eqref{ann}.  From \eqref{omeg0-def} we know that $\alpha(0)=0$.  As in the proof of Proposition \ref{formal}, we use \eqref{vort-1}, \eqref{ttt} to compute
\begin{align*}
(\partial_t + {\mathcal L}_u) \alpha &= (\partial_t + {\mathcal L}_u) \alpha - (\partial_t + {\mathcal L}_u) (d \tilde \theta \wedge dw^3) \\
&= 0 - d 0 \wedge dw^3 - d \tilde \theta \wedge d {\mathcal L}_u w^3 \\
&= - d \tilde \theta \wedge d u^3
\end{align*}
where $u^3$ is the $\frac{d}{dw^3}$ component of $u$.
On the other hand, from \eqref{vort-2} and \eqref{nbd}, \eqref{ap-def} (noting that $\tilde \eta$ equals $1$ on the support of $\omega$ or $\tilde \theta$ for times in $[0,T]$) we have
\begin{align*}
u^3 &= \partial_1 \partial_1 \Delta_w^{-1} \tilde A_0^* \omega_{12} + \partial_2 \partial_2 \Delta_w^{-1} \tilde A_0^* \omega_{12}  \\
&\quad + \partial_1 \Delta_w^{-1} \omega_{13} + \partial_2 \Delta_w^{-1} \omega_{23} \\
&= \tilde A_0^* \alpha_{12} + \partial_1 \Delta^{-1} \partial_1 \tilde \theta + \partial_2 \Delta_w^{-1} \partial_2 \tilde \theta \\
&\quad + \partial_1 \Delta^{-1} \alpha_{13} + \partial_2 \Delta_w^{-1} \alpha_{23} \\
&= (\tilde A_0^* \alpha_{12} + \partial_1 \Delta_w^{-1} \alpha_{13} + \partial_2 \Delta_w^{-1} \alpha_{23}) - \tilde \theta
\end{align*}
where all derivatives and components are with respect to the $(w^1,w^2,w^3)_{\operatorname{rsc}}$ coordinate system.  Repeating the arguments in Theorem \ref{formal} verbatim, we thus have $\alpha(t)=0$ for all $0 \leq t \leq T$.  Thus we have
$$ \omega = d \tilde \theta \wedge dw^3$$
for times $0 \leq t \leq T$.  In particular, $\omega_{12}$ vanishes.

By continuity and a compactness argument, there must exist a point $(w^1,w^2,w^3)_{\operatorname{rsc}}$ on the boundary of $\Omega(T)$ which is also on the boundary of the support of $\omega(T)$ or $\tilde \theta(T)$, but such that the support of $\omega(t)$ or $\tilde \theta(t)$ escapes $\Omega(t)$ in any given neighbourhood of $(w^1,w^2,w^3)_{\operatorname{rsc}}$ for times $t>T$ arbitrarily close to $T$.

Now we compute the vector potential $\tilde A \omega$ and velocity field $u$ at time $T$ and in a sufficiently small neighbourhood of this point $(w^1,w^2,w^3)_{\operatorname{rsc}}$.  In this neighbourhood and on the support of $\omega(T)$, the cutoff $\tilde \eta$ equals $1$, and from \eqref{tao} we see that $A_0$ and $A_0^*$ vanish in this neighbourhood.  From \eqref{nbd}, \eqref{ap-def} and the vanishing of $\omega_{12}$ we thus have
$$
\tilde A \omega = \Delta_w^{-1} \omega_{13} \frac{d}{d w^1} \wedge \frac{d}{d w^3} + \Delta_w^{-1} \omega_{23} \frac{d}{d w^2} \wedge \frac{d}{d w^3}$$
in this neighbourhood, after abusing notation and identifying this neighbourhood with a subset of $\R^2 \times \R/2\pi M\Z$.  Since $\omega_{13}, \omega_{23}$ are invariant with respect to translations in the $w^3$ neighbourhood, we conclude that $\delta \tilde A \omega$ has vanishing $\frac{d}{dw^1}$ and $\frac{d}{dw^2}$ components in this neighbourhood, thus the velocity field $u$ is parallel to $\frac{d}{dw^3}$.  But since $\Omega(T)$ is invariant in the $\frac{d}{dw^3}$ direction and is expanding outwards in the other two directions, we see from the transport equations for $\omega(t)$ and $\tilde \theta(t)$ that for $t>T$ sufficiently close to $T$, $\omega(t)$ and $\tilde \theta(t)$ are supported inside $\Omega(t)$ in this neighbourhood, contradicting the construction of $(w^1,w^2,w^3)_{\operatorname{rsc}}$.  Thus $\omega(t)$ and $\tilde \theta(t)$ are supported in $\Omega(t)$ for all $0 \leq t \leq 1$.  Repeating the above arguments we then obtain the second claim of the proposition.
\end{proof}

If we insert the above proposition back into \eqref{nbd}, \eqref{ap-def}, noting again that $\tilde \eta$ equals $1$ on the support of $\omega$ or $\tilde \theta$, we have
\begin{align*}
\tilde A \omega & \coloneqq - \tilde A_0 \Delta_w^{-1} (\partial_1 \partial_1 \tilde \theta + \partial_2 \partial_2 \tilde \theta ) \frac{d}{d w^1} \wedge \frac{d}{d w^2} \\
&\quad + \Delta_w^{-1} \partial_1 \tilde \theta \frac{d}{d w^1} \wedge \frac{d}{d w^3} \\
&\quad + \Delta_w^{-1} \partial_2 \tilde \theta \frac{d}{d w^2} \wedge \frac{d}{d w^3}. 
\end{align*}
Repeating the arguments of the previous section verbatim, we see that (in $(w^1,w^2,w^3)_{\operatorname{rsc}}$ coordinates) $\tilde \theta$ is the lift of a continuously differentiable, compactly supported function $\theta\colon [0,1] \times \R^2 \to \R$ that contradicts Proposition \ref{sqgb}, as required.

\end{document}